\newcolumntype{L}{>{$}c<{$}} 
\newtheorem{Theorem}{Theorem}[section]
\newtheorem{Corollary}[Theorem]{Corollary}
\newtheorem{Lemma}[Theorem]{Lemma}
\newtheorem{Conjecture}[Theorem]{Conjecture}
\theoremstyle{definition}
\newcommand{\PSL}{\mathrm{PSL}}
\newcommand{\Sz}{\mathrm{Sz}}
\newcommand{\SolG}[1]{\text{Sol}_G(#1)}
\newcommand{\sgraph}{\Gamma_S(G)}
\newcommand{\dgraph}{\Delta_S(G)}
\newcommand{\gen}[1]{\langle #1 \rangle}
\newcommand{\cc}{C_2^p \rtimes C_{q-1}}
\newcommand{\ccc}{C_3^p \rtimes C_{(q-1)/2}}
\newcommand{\cccc}{(C_2^p.C_2^p) \rtimes C_{q-1}}
\newcommand{\cp}{C_p\rtimes C_{(p-1)/2}}
\newcommand{\divides}{\bigm|}
\newcommand{\ndivides}{%
  \mathrel{\mkern.5mu 
    \ooalign{\hidewidth$\big|$\hidewidth\cr$\nmid$\cr}%
  }%
}
\theoremstyle{remark}
\newcommand*\bigcdot{\mathpalette\bigcdot@{.5}}
\newcommand*\bigcdot@[2]{\mathbin{\vcenter{\hbox{\scalebox{#2}{$\m@th#1\bullet$}}}}}
\DeclareMathOperator{\Core}{Core}
\title{CHaracterization of Solubilizers of Elements in Minimal Simple Groups}
\author[B. ~Akbari]{Banafsheh Akbari}
\author[J.~Chuharski]{Jake Chuharski}
\author[V. ~Sharan]{Vismay Sharan}
\author[Z. ~Slonim]{Zachary Slonim}
\address{Banafsheh~Akbari\\ Department of Mathematics \\ Cornell University \\ Ithaca, New York, USA \\
  \href{mailto:b.akbari@cornell.edu}
  {{\ttfamily\upshape b.akbari@cornell.edu}}}
\address{Jake~Chuharski\\ Department of Mathematics \\ Massachusetts Institute of Technology\\ Cambridge, Massachusetts, USA \\
  \href{mailto:chuharsk@mit.edu}
  {{\ttfamily\upshape chuharsk@mit.edu}}}
\address{Vismay~Sharan\\ Department of Mathematics \\ Yale University \\ New Haven, Connecticut, USA \\
  \href{mailto:vismay.sharan@yale.edu}
  {{\ttfamily\upshape vismay.sharan@yale.edu}}}
\address{Zachary~Slonim\\ Department of Mathematics \\ University of California, Berkeley \\ Berkeley, California, USA \\
  \href{mailto:zachslonim@berkeley.edu}
  {{\ttfamily\upshape zachslonim@berkeley.edu}}}
\begin{document}

\maketitle

\begin{abstract}
Given a finite group $G$, the solubilizer
of an element $x$, denoted by $\SolG{x}$, is the set of all elements $y$ such that $\langle x, y\rangle$ is a soluble subgroup of $G$. In this paper, we provide a classification for all solubilizers of elements in minimal simple groups. 
We also examine these sets to explore their properties by discussing some computational methods and making some conjectures for further work. \\[0.3cm]
{\bf Keywords}: Soluble group; Simple groups; Solubilizer, Solubility Graph.
\end{abstract}

\def\thefootnote{ \ }
\footnote{{\em $2000$ Mathematics Subject Classification}:
20D10, 20D08, 20D60.}

\begin{section}{Introduction}\label{Introduction}
Let $G$ be a finite group. For an element $x\in G$, we define the {\em solubilizer}
of $x$ in $G$ as the set
    \[\SolG{x}:=\{ y\in G \mid \gen{x,y} \text{ is soluble}\}.\]
In general, $\SolG{x}$ is not a subgroup of $G$. However, it can happen that this set is a subgroup. In fact, it has been shown in \cite{Akbari2} that $\SolG{x}$ is a subgroup of $G$ for any element $x\in G$ if and only if $G$ is a soluble group.
    
Due to a well known result, a finite group $G$ is soluble if and only if, for every $x, y\in G$ the subgroup $\langle x, y\rangle$ is
soluble (see Thompson \cite{Thompson} and Flavell \cite{Flavell}). This means that a ﬁnite group $G$ is soluble if and only if, for any $x\in G$, $\SolG{x}=G$.

We denote by $R(G)$ the soluble radical of $G$, which is the largest soluble normal subgroup of $G$. In \cite{GKPS}, Guralnick et al. proved that for an element $x$ in $G$, $x\in R(G)$ if and only if the subgroup $\gen{x,y}$ is soluble for all $y\in G$. Hence, $x\in R(G)$ if and only if $\SolG{x}=G$. In Section \ref{prelim} of this paper, we give some basic properties of the solubilizer sets, some interesting known results, and some useful lemmas that we use later.

As an interesting problem, we can consider how the properties of the structure of a single solubilizer can affect the structure of the whole group. For instance, it has been proved in \cite{Akbari2} that if $G$ is a group having an element $x$ such that all elements of $\SolG{x}$ commute pairwise, then $G$ is abelian. In \cite{Akbari3}, this was further generalized to show that if $G$ has an element $x$ such that $[u_1, u_2, u_3]=1$ for every $u_1, u_2, u_3\in \SolG{x}$, then $\gamma_3(G)=1$. That is, the subgroup generated by all long commutators of weight $3$ is the third term of the lower central series of $G$, implying that $G$ is a nilpotent group if all such commutators are trivial. We also note that the arithmetic properties of the solubilizers of elements can directly influence the structure of the group. For instance, due to the results obtained in \cite{Akbari2} and \cite{Akbari3}, we can see that if a group $G$ has an element whose solubilizer has size $p$ or $p^2$, where $p$ is a prime, then $G$ is a $p$-group.

In this paper, we continue exploring the solubilizer set to find more information about it. We seek to obtain more information about how exactly the properties of a single solubilizer set influence the entire group. To do this, we provide a list of all solubilizers and their sizes for the minimal simple groups as an infinite class of finite groups in Section \ref{class}. A  minimal simple group is a non-abelian simple group in which every proper subgroup is soluble. Thompson \cite{Thompson}*{Corollary 1} determined all minimal simple groups. In fact, every minimal simple group is isomorphic to one of the following: 
The projective special linear groups  $\PSL(2, 2^p)$, where $p$ is any prime; $\PSL(2,3^p)$ where $p$ is an odd prime; $\PSL(2, p)$ where $p > 3$ is a prime so that $5 \mid p^2 + 1$ or equivalently so that $p\equiv 2$ or $3\pmod 5$; $\PSL(3,3)$; and the Suzuki groups $\Sz(2^p)$, where $p$ is an odd prime. We provide some tables including some information about the structure of subgroups containing a certain element in minimal simple groups which allows us to find the solubilizer sets and their sizes.

In Section \ref{graphprop}, we investigate some properties of the induced solubility graph $\Delta_S(G)$, a simple graph defined on the finite group $G$ whose vertices are the elements of $G\setminus R(G)$ and two vertices $x,y\in G\setminus R(G)$ are adjacent if and only if $\gen{x,y}$ is soluble. We prove some conditions for when $\Delta_S(G)$ has an Eulerian or Hamiltonian cycle, and look specially at when $G$ is a minimal simple group. We also provide some bounds on the chromatic number of $\Delta_S(G)$ for a minimal simple group $G$ in Sections \ref{graphprop} and \ref{cors}. Then, we give some results using our classification to answer some interesting conjectures about the solubilizer sets for minimal simple groups. Finally, in Section \ref{comp}, we give some computational results we obtained regarding colorability and Hamiltonian cycles using algorithms we ran on the adjacency matrices of $\Delta_S(G)$ for some small simple groups.
\end{section}

\begin{section}{Preliminary Results}\label{prelim}
The solubilizer of an element in a finite group was introduced in \cite{HaiReuven}. This set has been studied extensively in \cite{Akbari2, Akbari3, Mousavi}. This section is intended to collect some recent results on the solubilizer of an element and explore some additional properties of this set. Finally, we present some results which help us prove the results in Section \ref{class}.
\begin{Lemma}[\cite{Akbari2}*{Lemma 2.1, Corollary 2.2, and Lemma 2.4}]\label{normalizer} Let $G$ be a group and $x \in G$. Then:
\begin{itemize}
    \item[$(a)$] $\langle x \rangle\subsetneq N_G(\langle x \rangle) \subseteq N_G(\langle x \rangle) \cup R(G) \subseteq \SolG{x}$;
    
    \item[$(b)$] $|\SolG{x}|$ is divisible by $|x|$ and $|R(G)|$.
\end{itemize}
\end{Lemma}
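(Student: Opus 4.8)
The plan is to prove the two parts in order, using only elementary facts about solubility and the soluble radical. For part $(a)$, I would first establish the proper inclusion $\langle x\rangle\subsetneq N_G(\langle x\rangle)$: this is standard, since a cyclic group generated by $x$ is nilpotent (indeed abelian), hence subnormal in any finite overgroup, and the normalizer of a proper subgroup in a finite group properly contains that subgroup (the ``normalizer grows'' principle applied to $\langle x\rangle$ inside, say, a Sylow subgroup of $\langle x\rangle$'s normal closure, or more simply by noting $\langle x\rangle$ is not self-normalizing in a finite $p$-group and bootstrapping). Next I would show $N_G(\langle x\rangle)\subseteq\SolG{x}$: for $y\in N_G(\langle x\rangle)$, the subgroup $\langle x,y\rangle$ normalizes $\langle x\rangle$, so $\langle x\rangle\trianglelefteq\langle x,y\rangle$ and the quotient $\langle x,y\rangle/\langle x\rangle$ is cyclic (generated by the image of $y$); an extension of a cyclic (hence soluble) group by a cyclic group is soluble, so $\langle x,y\rangle$ is soluble and $y\in\SolG{x}$. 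Finally, for the inclusion involving $R(G)$: if $y\in R(G)$, then $\langle x,y\rangle R(G)/R(G)$ is generated by the single element $xR(G)$, hence cyclic and soluble, and since $R(G)$ is soluble, $\langle x,y\rangle R(G)$ is soluble, so its subgroup $\langle x,y\rangle$ is soluble; thus $R(G)\subseteq\SolG{x}$, and combined with the previous step, $N_G(\langle x\rangle)\cup R(G)\subseteq\SolG{x}$.

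For part $(b)$, divisibility by $|x|$ is immediate: $\langle x\rangle$ is an abelian, hence soluble, subgroup, so $\langle x\rangle\subseteq\SolG{x}$, and moreover $\SolG{x}$ is a union of cosets of $\langle x\rangle$ — indeed if $y\in\SolG{x}$ then for any $x^i$ we have $\langle x, yx^i\rangle=\langle x,y\rangle$ is soluble, so $yx^i\in\SolG{x}$; hence $\langle x\rangle$ acts on $\SolG{x}$ by right multiplication with all orbits of size $|x|$, giving $|x|\bigm||\SolG{x}|$. For divisibility by $|R(G)|$, the same coset argument works with $R(G)$ in place of $\langle x\rangle$: by part $(a)$, $R(G)\subseteq\SolG{x}$, and if $y\in\SolG{x}$ and $r\in R(G)$ then $\langle x, yr\rangle\subseteq\langle x,y\rangle R(G)$ which is soluble (as shown above, being an extension of the soluble group $R(G)$ by the soluble group $\langle x,y\rangle R(G)/R(G)$), so $yr\in\SolG{x}$; thus $\SolG{x}$ is a union of right cosets of $R(G)$, giving $|R(G)|\bigm||\SolG{x}|$.

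I do not expect any serious obstacle here; these are foundational observations and the only point requiring a moment's care is the strictness of the inclusion $\langle x\rangle\subsetneq N_G(\langle x\rangle)$ when $x=1$ — but then $N_G(\langle x\rangle)=G$ and strictness holds unless $G$ is trivial, which we may exclude — and, more generally, ensuring the ``normalizer grows'' argument is invoked correctly. Since this lemma is quoted verbatim from \cite{Akbari2}, I would simply cite that source for the detailed verification and present the short arguments above as a reminder.
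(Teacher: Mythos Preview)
The paper does not supply its own proof of this lemma; it simply cites \cite{Akbari2}. So there is no argument in the paper to compare against, only the statement.

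That said, your proposal contains a genuine error in the argument for the \emph{strict} inclusion $\langle x\rangle\subsetneq N_G(\langle x\rangle)$. You assert that a cyclic (hence nilpotent) subgroup is ``subnormal in any finite overgroup'' and then invoke the normalizer-grows principle. This is false: nilpotency of a subgroup does not force subnormality in the ambient group. Concretely, take $G=S_3$ and $x=(1\,2)$; then $\langle x\rangle$ has order~$2$, and $N_G(\langle x\rangle)=C_G(x)=\langle x\rangle$, so the inclusion is \emph{not} strict. Thus the statement with $\subsetneq$ is not true as written, and no argument can establish it without additional hypotheses. (Possibly the original source has $\subseteq$, or assumes $G$ is insoluble and $x\notin R(G)$, or similar; as quoted here the proper inclusion fails.) You flag the strictness as needing ``a moment's care'' only at $x=1$, but the real obstruction lies elsewhere.

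The remainder of your proposal is correct and standard: the arguments that $N_G(\langle x\rangle)\subseteq\SolG{x}$ (via $\langle x\rangle\trianglelefteq\langle x,y\rangle$ with cyclic quotient) and $R(G)\subseteq\SolG{x}$ (via $\langle x,y\rangle\le\langle x,y\rangle R(G)$ soluble) are fine, and the right-coset arguments for part~(b) are exactly how one proves the divisibility statements.
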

In view of part $(a)$ of Lemma \ref{normalizer}, it is easy to check that given an element $x\in G$, if
$N_G(\langle x \rangle)$ is a maximal subgroup of $G$ which is the only one containing $x$, then $\SolG{x}=N_G(\langle x \rangle)$. 

\begin{Lemma}[\cite{Akbari2}*{Lemma 2.1}]\label{solvable def}
Let $G$ be a finite group and $x\in G$, then  $$\SolG{x}=\bigcup_{x\in H\le G}H$$
with the union being taken over all soluble subgroups of $G$ containing $x$.
\end{Lemma}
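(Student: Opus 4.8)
The plan is to prove this set equality by checking the two inclusions separately; each is immediate from the definition of $\SolG{x}$ once we recall that every subgroup of a soluble group is soluble.

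For the inclusion $\bigcup_{x\in H\le G}H\subseteq\SolG{x}$ (the union being over soluble subgroups $H$), I would take an element $y$ lying in some soluble subgroup $H\le G$ with $x\in H$. Then $\gen{x,y}\le H$, and since a subgroup of a soluble group is soluble, $\gen{x,y}$ is soluble; hence $y\in\SolG{x}$ by definition. For the reverse inclusion, given $y\in\SolG{x}$ the subgroup $\gen{x,y}$ is soluble by definition, and it plainly contains both $x$ and $y$; thus it is one of the subgroups indexing the union, and it exhibits $y$ as an element of that union. Combining the two inclusions gives the claimed equality.

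There is no real obstacle here, since the lemma is essentially a reformulation of the definition of $\SolG{x}$ together with the elementary fact that solubility passes to subgroups. The only point worth noting is that the union on the right is nonempty: $\gen{x}$ is cyclic, hence soluble, and contains $x$, so the right-hand side always contains at least $x$, consistent with $x\in\SolG{x}$; thus there is no degenerate case to address.
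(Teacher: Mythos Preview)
Your proof is correct and is exactly the natural double-inclusion argument one would expect. Note, however, that the paper does not itself prove this lemma: it is quoted from \cite{Akbari2} without proof, so there is no proof in the paper to compare against.
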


Given a normal soluble subgroup $N$ of $G$, we define
$$\frac{\SolG{x}}{N} = \{yN \mid  y \in \SolG{x}\} = \{yN \mid \langle x,y \rangle \hbox{ is soluble} \}.$$
We can apply the following result to reduce most of the conditions to the case when the soluble radical is trivial.
\begin{Lemma}[\cite{Akbari2}*{Lemmas 2.4 and 2.5}]\label{quotient}
Let $N$ be a normal soluble subgroup of a group $G$. Then $|\SolG{x}|$ is divisible by $|N|$, and $\SolG{G/N}(xN)=\SolG{x}/N$. In particular, $$\bigg|\frac{\SolG{x}}{N}\bigg|=\frac{|\SolG{x}|}{|N|}.$$
\end{Lemma}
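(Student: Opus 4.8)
The plan is to first show that $\SolG{x}$ is a union of cosets of $N$, which immediately gives both the divisibility of $|\SolG{x}|$ by $|N|$ and the cardinality formula, and then to establish the set equality $Sol_{G/N}(xN)=\SolG{x}/N$ by proving the two inclusions. The only tool needed throughout is the standard package of closure properties of the class of soluble groups: it is closed under subgroups, under quotients, and under extensions (if $N\trianglelefteq H$ with $N$ and $H/N$ soluble then $H$ is soluble).

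For the coset structure, the key observation is that if $H\le G$ is soluble and $N\trianglelefteq G$ is soluble then $HN$ is soluble, since $HN/N\cong H/(H\cap N)$ is a quotient of $H$, hence soluble, and $N$ is soluble, so $HN$ is a soluble-by-soluble extension. Now I would take $y\in\SolG{x}$, set $H=\gen{x,y}$ (soluble), and let $n\in N$: then $\gen{x,yn}\le\gen{x,y,N}=HN$, which is soluble, so its subgroup $\gen{x,yn}$ is soluble and $yn\in\SolG{x}$. Thus $yN\subseteq\SolG{x}$ whenever $y\in\SolG{x}$, so $\SolG{x}$ is a (nonempty, since $x\in\SolG{x}$) disjoint union of $N$-cosets. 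Consequently $|N|$ divides $|\SolG{x}|$, and the restriction of the quotient map $y\mapsto yN$ to $\SolG{x}$ is an $|N|$-to-one surjection onto $\SolG{x}/N$, giving $|\SolG{x}/N|=|\SolG{x}|/|N|$. (Alternatively, since $N\trianglelefteq G$ is soluble we have $N\le R(G)$, so the divisibility already follows from Lemma~\ref{normalizer}$(b)$; but the coset argument is needed for the index formula anyway.)

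For the equality of the two sets I would argue both inclusions directly. If $y\in\SolG{x}$, then $\gen{x,y}$ is soluble, so its homomorphic image $\gen{xN,yN}=\gen{x,y}N/N$ in $G/N$ is soluble, whence $yN\in Sol_{G/N}(xN)$; this gives $\SolG{x}/N\subseteq Sol_{G/N}(xN)$. Conversely, if $gN\in Sol_{G/N}(xN)$, then $\gen{xN,gN}=\gen{x,g}N/N$ is soluble, so $\gen{x,g}N$ is an extension of the soluble group $\gen{x,g}N/N$ by the soluble group $N$ and is therefore soluble; its subgroup $\gen{x,g}$ is then soluble, so $g\in\SolG{x}$ and $gN\in\SolG{x}/N$. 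This proves $Sol_{G/N}(xN)=\SolG{x}/N$, and the final "in particular" statement is precisely the index formula already obtained.

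I do not anticipate a real obstacle: the argument is entirely formal once one has the soluble-by-soluble closure fact and the identity $\gen{x,g}N/N=\gen{xN,gN}$. The one place to be slightly careful is to verify that $\SolG{x}$ is genuinely a union of \emph{full} $N$-cosets before passing to cardinalities, which is exactly what the product-is-soluble step in the second paragraph secures.
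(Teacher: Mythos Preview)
Your proof is correct. Note that the paper does not actually supply its own proof of this lemma: it is stated with a citation to \cite{Akbari2}*{Lemmas 2.4 and 2.5} and used as a preliminary result, so there is no in-paper argument to compare against. Your argument is the standard one and is exactly what one would expect the cited source to contain: the closure of soluble groups under subgroups, quotients, and extensions does all the work, and your care in first establishing that $\SolG{x}$ is a union of full $N$-cosets is precisely what is needed to justify the cardinality formula.
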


As mentioned before, generally the solubilizer of an element of a group is not a subgroup.  So it would be reasonable to ask what can occur if this set is a subgroup. In fact, we have the following lemma.
\begin{Lemma}[\cite{Akbari2}*{Theorem 3.4}]\label{all solubilizer subgroup}
A group $G$ is soluble if and only if $\SolG{x}$ is a subgroup of $G$ for all $x\in G$.
\end{Lemma}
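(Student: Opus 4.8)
The plan is to prove the two directions separately, with all the work in the ``if'' direction. The ``only if'' direction is immediate: if $G$ is soluble then every subgroup of $G$ is soluble, so $\langle x,y\rangle$ is soluble for all $x,y\in G$, whence $\SolG{x}=G$ for every $x$, which is certainly a subgroup. For the converse I would argue by contradiction, taking $G$ to be a counterexample of smallest order, so that $\SolG{x}$ is a subgroup for every $x\in G$ but $G$ is not soluble. The first step is to reduce to the case $R(G)=1$: since $R(G)$ is soluble and normal with $R(G)\subseteq\SolG{x}$ for all $x$ (Lemma \ref{normalizer}), Lemma \ref{quotient} shows that the solubilizer of $xR(G)$ in $G/R(G)$ is $\SolG{x}/R(G)$, which is a subgroup of $G/R(G)$; as $G/R(G)$ is still non-soluble and has trivial soluble radical, minimality of $|G|$ forces $R(G)=1$.

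The second and crucial step is to upgrade the hypothesis: I claim that for every $x\neq 1$ the set $\SolG{x}$ is a \emph{soluble} proper subgroup. Properness is clear, since $\SolG{x}=G$ would put $x\in R(G)=1$ by the theorem of Guralnick et al.\ \cite{GKPS} recalled in the introduction. For solubility, put $S=\SolG{x}$ and observe that for any $y\in S$ the set $\{s\in S:\langle y,s\rangle\text{ is soluble}\}$ is exactly $S\cap\SolG{y}$, an intersection of two subgroups of $G$ and hence a subgroup of $S$; thus $S$ itself satisfies the hypothesis of the theorem, and since $|S|<|G|$, minimality of the counterexample makes $S$ soluble.

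The final step produces the contradiction using involutions. Since $G$ is non-soluble, the Feit--Thompson theorem gives that $|G|$ is even, so $G$ contains an involution $t$, and $M:=\SolG{t}$ is a proper soluble subgroup by the previous step. For any involution $s$ of $G$, the subgroup $\langle t,s\rangle$ is a finite dihedral group, hence soluble, so $s\in\SolG{t}=M$. Therefore the subgroup $T$ generated by all involutions of $G$ (which is normal in $G$, being invariant under conjugation) is contained in the soluble group $M$ and so is itself soluble; being a normal soluble subgroup, $T\subseteq R(G)=1$. Hence $G$ has no involutions, contradicting that $|G|$ is even.

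I expect the middle step to be the main obstacle: the bare hypothesis that $\SolG{x}$ is a subgroup does not immediately make it soluble, since a priori it is only a union of soluble subgroups sharing the point $x$. The way around this is to pass to a minimal counterexample and use the identity $\{s\in S:\langle y,s\rangle\text{ soluble}\}=S\cap\SolG{y}$, which shows that the property ``all solubilizers are subgroups'' is inherited by each $\SolG{x}$. Once that is in hand, the involution endgame is short precisely because any two involutions generate a soluble (dihedral) group.
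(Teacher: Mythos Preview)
Your argument is correct. Note, however, that the paper does not supply its own proof of this lemma: it is quoted as a preliminary result with the citation \cite{Akbari2}*{Theorem 3.4}, so there is no in-paper proof to compare against.

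On the substance, each step checks out. The reduction to $R(G)=1$ via Lemma~\ref{quotient} is standard. The key inheritance step---that $S=\SolG{x}$ again has all solubilizers being subgroups---is justified exactly as you say: for $y\in S$ one has $\mathrm{Sol}_S(y)=\{s\in S:\langle y,s\rangle\text{ soluble}\}=S\cap\SolG{y}$, an intersection of two subgroups of $G$ by hypothesis, hence a subgroup of $S$. Minimality then makes $S$ soluble (properness via \cite{GKPS} is needed here precisely so that $|S|<|G|$). The endgame is clean: Feit--Thompson provides an involution $t$, every involution lies in the soluble subgroup $\SolG{t}$ because two involutions generate a dihedral group, and the characteristic subgroup generated by all involutions is then a nontrivial normal soluble subgroup, contradicting $R(G)=1$.

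One small remark: your use of \cite{GKPS} to force $\SolG{x}\neq G$ for $x\neq 1$ is itself a deep input (comparable in strength to the statement being proved). The argument still goes through, and the paper explicitly records this characterization of $R(G)$ in the introduction, so invoking it is fair game in this context.
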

However, given an insoluble group $G$, we can often find some element $x$ such that $\SolG{x}$ is a subgroup. Then the normal core of $\SolG{x}$, that is the largest normal subgroup of $G$ being contained in $\SolG{x}$, is a subgroup of the soluble radical $R(G)$. So we have the following lemma.
\begin{Lemma}\label{core of sol}
Let $G$ be a group and $x$ an element of $G$ such that $\SolG{x}$ is a subgroup of $G$. Then $\Core_G(\SolG{x})$ is soluble.
\end{Lemma}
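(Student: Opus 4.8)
The plan is to strip off the soluble radical first, and then apply the characterisation of the soluble radical of Guralnick et al.\ (\cite{GKPS}, recalled in Section~\ref{Introduction}): an element lies in $R(H)$ precisely when it solubly generates with every element of $H$. Write $S=\SolG{x}$ and $C=\Core_G(S)$, so that $C\trianglelefteq G$ and $C\subseteq S$. By Lemma~\ref{normalizer}(a) we have $R(G)\subseteq S$, and since $R(G)\trianglelefteq G$ this gives $R(G)\le C$. As $R(G)$ is soluble, Lemma~\ref{quotient} shows that $\SolG{x}/R(G)$ is the solubilizer of $xR(G)$ in $G/R(G)$ and is again a subgroup, and the correspondence theorem (using $R(G)\le C\le S$) gives $\Core_{G/R(G)}(S/R(G))=C/R(G)$. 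Since an extension of a soluble group by a soluble group is soluble, it suffices to prove the lemma when $R(G)=1$; in that case $C$ is soluble if and only if $C=1$, i.e.\ if and only if $\SolG{x}$ contains no nontrivial normal subgroup of $G$.

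Assume therefore $R(G)=1$ and, for a contradiction, $C\neq 1$. Then $R(C)$ is characteristic in $C$, hence soluble and normal in $G$, hence trivial, so $C$ is insoluble. Set $L=\gen{x}C$; this is a subgroup of $G$ because $C\trianglelefteq G$, and $L\subseteq S$. Every element of $L$ has the form $x^{i}c$ with $c\in C$, and $\gen{x,x^{i}c}=\gen{x,c}$, which is soluble since $c\in C\subseteq\SolG{x}$. Hence $\gen{x,\ell}$ is soluble for every $\ell\in L$, and \cite{GKPS} applied inside $L$ yields $x\in R(L)$. On its own this does not suffice, since $R(L)$ may meet the normal subgroup $C$ trivially; the point to exploit is that $C$ is normal in $G$, not merely in $L$.

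The remaining step, which I expect to be the genuine obstacle, is to turn this into a contradiction. I would pick a minimal normal subgroup $M$ of $G$ with $M\le C$; since $R(G)=1$, $M$ is non-abelian, say $M=T_{1}\times\cdots\times T_{k}$ with the $T_{i}$ isomorphic non-abelian simple groups transitively permuted by conjugation in $G$. Because $M\subseteq S=\SolG{x}$ and $M\trianglelefteq G$, the subgroup $\gen{x^{g},m}$ is soluble for all $g\in G$ and all $m\in M$. The heart of the argument is to analyse the action of $\gen{x}$ on the set of components $\{T_{1},\dots,T_{k}\}$ and on a single $T_{i}$, and, using Lemma~\ref{solvable def} together with the soluble-radical criterion, to produce an element $m\in M$ (possibly paired with a suitable conjugate $x^{g}$ of $x$) for which $\gen{x^{g},m}$ is insoluble --- contradicting $M\subseteq S$. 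Everything before this point is formal; isolating that element is where the real work lies.
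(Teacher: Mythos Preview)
Your reduction to the case $R(G)=1$ via Lemma~\ref{quotient} is clean and matches the paper's own strategy. The trouble is that the ``real work'' you postpone cannot be carried out: the lemma as stated is false, so no contradiction is obtainable. Take $G=A_5\times A_5$ and $x=(a,1)$ with $|a|=5$. For $(b,c)\in G$ the subgroup $\gen{(a,1),(b,c)}$ surjects onto $\gen{a,b}$ in the first factor and lies inside $\gen{a,b}\times\gen{c}$, so it is soluble precisely when $\gen{a,b}$ is; since $\mathrm{Sol}_{A_5}(a)$ is the single maximal subgroup $D_{10}$ containing $a$ (Table~\ref{tbl2,2p} with $q=4$), one obtains $\SolG{x}=D_{10}\times A_5$, a genuine subgroup of $G$. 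The only nontrivial proper normal subgroups of $G$ are $A_5\times 1$ and $1\times A_5$, whence $\Core_G(\SolG{x})=1\times A_5$, which is insoluble. In this example your minimal normal subgroup $M$ is $1\times A_5$, and for every $g=(g_1,g_2)\in G$ and $m=(1,s)\in M$ one has $\gen{x^{g},m}=\gen{a^{g_1}}\times\gen{s}$, which is abelian --- so the insoluble pair you hope to exhibit simply does not exist.

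For completeness, the paper's own argument breaks at the displayed identity
\[
\bigcap_{g\in G}\Bigl(\bigcup_{x\in H\le G}H^{g}\Bigr)=\bigcup_{x\in H\le G}\Bigl(\bigcap_{g\in G}H^{g}\Bigr),
\]
which interchanges an intersection with a union and is not valid in general. In the counterexample above the left-hand side is $1\times A_5$, while every soluble $H\le G$ containing $x$ has $\Core_G(H)=1$ (no soluble subgroup can contain either direct factor), so the right-hand side is trivial. Thus neither your outline nor the paper's proof can be completed without an additional hypothesis on $G$ or on $x$.
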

\begin{proof}
We will show that the normal core of $\SolG{x}$ is the union of the normal core of soluble subgroups of $G$ containing $x$. 

Using Lemma \ref{solvable def}, we have 
\[ \Core_G(\SolG{x})=\bigcap_{g\in G}(\bigcup_{x\in H\le G}H)^g=\bigcap_{g\in G}(\bigcup_{x\in H\le G}H^g)= \bigcup_{x\in H\le G}(\bigcap_{g\in G}H^g)=\bigcup_{x\in H\le G}\Core_G(H). \]

If the soluble radical $R(G)$ is trivial, then we can see that for any soluble subgroup of $G$ containing $x$, $\Core_G(H)$ is trivial and consequently, $\Core_G(\SolG{x})$ is trivial too. So assume that $R(G)$ is nontrivial. Then the soluble radical of $G/R(G)$ is trivial. Considering Lemma \ref{quotient}, we see that $\SolG{G/R(G)}(xR(G))$ is a subgroup. So we can use the last case to find $\Core_{G/R(G)}(Sol_{G/R(G)}(xR(G)))$ is trivial. On the other hand,
$$\Core_{G/R(G)}(Sol_{G/R(G)}(xR(G)))=\frac{\Core_G(\SolG{x})R(G)}{R(G)}.$$
This implies that $\Core_G(\SolG{x})\le R(G)$ and so $\Core_G(\SolG{x})$ is a soluble subgroup of $G$.
\end{proof}

When $\SolG{x}$ is a subgroup of an insoluble group $G$, we can also consider the case where $\SolG{x}$ is soluble or nilpotent.
\begin{Lemma}\label{soluble sol}
Let $G$ be an insoluble group and $x$ an element of $G$ such that $\SolG{x}$ is a soluble subgroup of $G$. Then $\SolG{x}$ is a maximal soluble subgroup of $G$.
\end{Lemma}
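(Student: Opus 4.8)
The plan is to show first that $\SolG{x}$ is a soluble subgroup containing $x$, and then that it is not properly contained in any larger soluble subgroup. The hypothesis already gives us that $\SolG{x}$ is a subgroup, and that it is soluble; and of course $x \in \SolG{x}$ since $\gen{x,x} = \gen{x}$ is cyclic, hence soluble. So the only content is maximality among soluble subgroups.

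For maximality, I would argue by contradiction: suppose $K$ is a soluble subgroup of $G$ with $\SolG{x} \subsetneq K$. Since $x \in \SolG{x} \subseteq K$ and $K$ is soluble, $K$ is a soluble subgroup of $G$ containing $x$. By Lemma \ref{solvable def}, every soluble subgroup of $G$ containing $x$ is contained in $\SolG{x}$; applying this to $K$ gives $K \subseteq \SolG{x}$, contradicting $\SolG{x} \subsetneq K$. Hence no such $K$ exists, and $\SolG{x}$ is a maximal soluble subgroup of $G$. (It is genuinely a \emph{maximal} soluble subgroup rather than merely a maximal subgroup: insolubility of $G$ guarantees $\SolG{x} \neq G$, so the poset of soluble subgroups properly containing nothing above $\SolG{x}$ is nonempty to speak of, and $\SolG{x}$ sits at the top of the part of this poset visible from $x$.)

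The only subtlety worth a sentence is that Lemma \ref{solvable def} is exactly the tool that makes this immediate: it identifies $\SolG{x}$ with the union of all soluble subgroups through $x$, so any soluble overgroup of $\SolG{x}$ through $x$ is already one of the sets being unioned and therefore cannot be strictly larger. I do not anticipate a real obstacle here; the lemma is essentially a restatement packaged for this corollary. If one wanted to avoid invoking Lemma \ref{solvable def} directly, one could instead observe that for any $k \in K$, the pair $\gen{x,k}$ lies in the soluble group $K$ and is therefore soluble, so $k \in \SolG{x}$ by definition, giving $K \subseteq \SolG{x}$ — the same conclusion by an even more elementary route.
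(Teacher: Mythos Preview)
Your proof is correct and follows essentially the same approach as the paper's: both argue that any soluble subgroup $H$ with $\SolG{x}\le H$ must satisfy $H\subseteq\SolG{x}$ by the defining property of the solubilizer, forcing $H=\SolG{x}$. Your presentation is a bit more expansive (explicitly invoking Lemma~\ref{solvable def} and spelling out the elementary alternative via $\gen{x,k}\le K$), but the underlying one-line idea is identical to the paper's.
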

\begin{proof}
Note that if there exists a soluble subgroup $H$ of $G$ such that $\SolG{x}\le H<G$, then by Lemma \ref{solvable def},
 we will have $H\le \SolG{x}$ and so $\SolG{x}=H$.
\end{proof}

By a similar argument to Lemma \ref{soluble sol}, we can prove the following lemma.
\begin{Lemma}\label{nilpotent sol}
Let $G$ be an insoluble group and $x$ an element of $G$ such that $\SolG{x}$ is a nilpotent subgroup of $G$. Then $\SolG{x}$ is a maximal nilpotent subgroup of $G$.
\end{Lemma}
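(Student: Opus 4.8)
The plan is to mirror the proof of Lemma~\ref{soluble sol} almost verbatim, exploiting two facts: that a nilpotent group is in particular a subgroup (so the hypothesis already contains the statement that $\SolG{x}$ is a subgroup of $G$), and that every nilpotent group is soluble. First I would record the trivial but essential observation that, since $\SolG{x}$ is nilpotent, it is a soluble subgroup of $G$ containing $x$; note this is only possible for an insoluble $G$ after passing to a proper containment, which is consistent with Lemma~\ref{all solubilizer subgroup}.

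Next, to verify maximality directly, suppose $N$ is a nilpotent subgroup of $G$ with $\SolG{x}\subseteq N$. Because $N$ is nilpotent it is soluble, and because $G$ is insoluble we must have $N\ne G$; moreover $x\in\SolG{x}\subseteq N$. Thus $N$ is a (proper) soluble subgroup of $G$ containing $x$, so it is one of the subgroups in the union defining $\SolG{x}$. By Lemma~\ref{solvable def} this forces $N\subseteq\SolG{x}$, and combined with $\SolG{x}\subseteq N$ we get $N=\SolG{x}$. Hence no nilpotent subgroup properly contains $\SolG{x}$, i.e.\ $\SolG{x}$ is a maximal nilpotent subgroup of $G$.

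I do not expect any genuine obstacle: the argument is the exact analogue of Lemma~\ref{soluble sol}, and the only point requiring a word of care is that we only ever compare $\SolG{x}$ against nilpotent overgroups (which are automatically soluble, hence proper in the insoluble $G$), so that the defining property of the solubilizer in Lemma~\ref{solvable def} applies to them.
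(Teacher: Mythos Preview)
Your proposal is correct and follows exactly the approach the paper intends: the paper states that the lemma is proved ``by a similar argument to Lemma~\ref{soluble sol},'' and you carry out precisely that argument, using that a nilpotent overgroup $N\supseteq\SolG{x}$ is soluble (hence proper in the insoluble $G$) and contains $x$, so $N\subseteq\SolG{x}$ by Lemma~\ref{solvable def}.
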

As a special case of Lemma \ref{nilpotent sol}, we can consider the case when  $\SolG{x}$ is a Sylow $p$-subgroup of $G$ for some element $x$ in $G$ and an odd prime $p$. Then it follows that $G$ is a $p$-group. More precisely, we have the following lemma.
\begin{Lemma}\cite{Akbari3}*{Lemma 4.1}\label{power of prime}
Let $G$ be an insoluble group and let $x$ be an element of $G$ such that $\SolG{x}$ is a subgroup. Then $|\SolG{x}|\neq p^n$ for all odd primes $p$ and all positive integers $n$.
\end{Lemma}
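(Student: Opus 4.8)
The plan is to argue by contradiction and reduce everything to a classical structural theorem on maximal subgroups. Suppose $G$ is insoluble, $\SolG{x}$ is a subgroup, and $|\SolG{x}|=p^{n}$ for an odd prime $p$ and some $n\ge 1$; I will derive a contradiction, arguing by induction on $|G|$. The first step is to identify $\SolG{x}$ as a full Sylow subgroup. Since $\SolG{x}$ has order $p^{n}$ it is a $p$-subgroup, hence lies inside some Sylow $p$-subgroup $P$ of $G$; but $P$ is a $p$-group, so it is soluble, and it contains $x$, so Lemma \ref{solvable def} gives $P\subseteq\SolG{x}$ and therefore $\SolG{x}=P\in\mathrm{Syl}_{p}(G)$. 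Applying Lemma \ref{normalizer def} to the soluble subgroup $P$ containing $x$ then yields $N_{G}(P)\subseteq\SolG{x}=P$, so $P$ is self-normalizing; moreover $P\neq G$ since $G$ is insoluble, so $G$ has a maximal subgroup properly containing $P$.

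The second step is to reduce to the case that $P$ is itself a maximal subgroup of $G$. Choose a maximal subgroup $M$ with $P\le M<G$. If $M$ is soluble, then $M$ is a soluble subgroup of $G$ containing $x$, so $M\subseteq\SolG{x}=P\subseteq M$, forcing $M=P$, so $P$ is maximal. If $M$ is insoluble, observe that $P$ is a Sylow $p$-subgroup of $M$ (it is already Sylow in $G$) and that $P\subseteq Sol_{M}(x)\subseteq\SolG{x}=P$ — the first inclusion because $P$ is a soluble subgroup of $M$ containing $x$, and the second because, by Lemma \ref{solvable def}, every soluble subgroup of $M$ containing $x$ is a soluble subgroup of $G$ containing $x$. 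Hence $Sol_{M}(x)=P$ has order $p^{n}$ while $M$ is insoluble with $|M|<|G|$, contradicting the inductive hypothesis. Therefore $M$ is soluble and $P$ is a maximal subgroup of $G$.

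The third step finishes the argument: $G$ is now an insoluble finite group possessing a maximal subgroup $P$ that is nilpotent (being a $p$-group) and of odd order. By Thompson's theorem that a finite group with a nilpotent maximal subgroup of odd order is soluble, $G$ is soluble, contrary to assumption. This contradiction establishes the lemma.

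I expect the genuinely substantive ingredient to be precisely this last step — Thompson's theorem on nilpotent maximal subgroups of odd order. Steps one and two are formal manipulations of the solubilizer via Lemmas \ref{solvable def} and \ref{normalizer def}, and the inductive reduction to the maximal case is routine; all the real content is concentrated in the appeal to that structural result.
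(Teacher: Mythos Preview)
Your proof is correct. The paper does not supply its own proof of this lemma---it is quoted from \cite{Akbari3}---so there is no in-paper argument to compare against directly; the surrounding text merely hints that the conclusion should be that $G$ is forced to be a $p$-group. Your route is the natural one: identifying $\SolG{x}$ with a self-normalizing Sylow $p$-subgroup via Lemmas \ref{solvable def} and \ref{normalizer def}, reducing by induction to the case where this Sylow subgroup is maximal in $G$, and then invoking Thompson's theorem that a finite group with a nilpotent maximal subgroup of odd order is soluble. One cosmetic slip: you write that $G$ has a maximal subgroup \emph{properly} containing $P$, but $P$ may itself already be maximal; your case analysis in the next paragraph handles this correctly, so the issue is only in the wording. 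The observation that $N_G(P)=P$ is established but not actually used in the remainder of your argument, so you could omit it.
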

Lemma \ref{power of prime} is not always true for the case of $p=2$. Let $G=\PSL(2, 31)$ and $x$ an element of order $16$. We can see from Atlas of Finite Groups \cite{Atlas} that the maximal subgroup isomorphic to the dihedral group $D_{32}$ is the only soluble subgroup containing $x$. It follows that $\SolG{x}\cong D_{32}$, which is a Sylow $2$-subgroup. To find more information about this case see \cite{Mousavi}.

In \cite{Akbari2} and \cite{Akbari3}, the authors answered a slightly more general question about when $|\SolG{x}|$ can equal $p^n$ even if $\SolG{x}$ is not necessarily a subgroup of $G$. Note that the following result is true for any prime. 
\begin{Lemma}\cite{Akbari3}*{Theorem B}\label{primepower}
Let $G$ be an insoluble group and $x$ an element of $G$. Then the size of $\SolG{x}$ cannot be $p$ nor $p^2$ for any prime $p$.
\end{Lemma}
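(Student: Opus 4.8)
The plan is to reduce to the case $R(G) = 1$ and then derive a contradiction from the known structure of $\SolG{x}$ in groups with trivial soluble radical. First I would invoke Lemma~\ref{quotient}: if $N$ is a nontrivial proper normal soluble subgroup contained in the hypothetical situation, then $|\SolG{x}|/|N| = |\Sol_{G/N}(xN)|$, and since $|\SolG{x}| = p^n$ we would get $|\Sol_{G/N}(xN)|$ is a smaller power of $p$ in the insoluble group $G/N$ (note $G/N$ is still insoluble because $G$ is insoluble and $N$ is soluble). So by induction on $|G|$ we may assume $R(G) = 1$, and in particular $|R(G)| = 1$ divides $p^n$ consistently with Lemma~\ref{normalizer}(b).

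Next, with $R(G) = 1$, I would argue that $x \ne 1$ (since $x \in R(G)$ would give $\SolG{x} = G$ insoluble, not a $p$-power that is proper) and that $|x|$ divides $|\SolG{x}| = p^n$ by Lemma~\ref{normalizer}(b), so $x$ is a nontrivial $p$-element. The key step is then to locate a soluble subgroup containing $x$ whose order is divisible by a prime other than $p$, which would force $|\SolG{x}|$ to have a non-$p$ prime divisor, contradiction. The natural candidate is $N_G(P)$ where $P$ is a Sylow $p$-subgroup of $G$ containing $\gen{x}$, or more robustly $N_G(\gen{x})$ itself, which by Lemma~\ref{normalizer}(a) strictly contains $\gen{x}$ and lies inside $\SolG{x}$. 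The point is that if $\SolG{x}$ had order $p$ or $p^2$ one could pin down $N_G(\gen{x})$ and $C_G(x)$ very tightly, but for a general power $p^n$ this direct approach needs more; so instead I would use the fact (this is the heart of the argument) that in a finite group with $R(G) = 1$, a Sylow $p$-subgroup is never self-normalizing unless... — rather, I would appeal to Lemma~\ref{normalizer def} applied to a maximal soluble subgroup $M$ containing a Sylow $p$-subgroup $P \ge \gen{x}$: then $N_G(P) \le N_G(M)$ piece aside, the cleanest route is that $\SolG{x}$ being a $p$-group of order $p^n$ forces $\SolG{x}$ to be an honest subgroup (it is closed under the group operation since any two of its elements together with $x$ generate a $p$-group, hence soluble), so by Lemma~\ref{soluble sol} it is a maximal soluble subgroup, and then by Lemma~\ref{power of prime} we get a contradiction immediately — so the theorem is essentially Lemma~\ref{power of prime} plus a verification that a $p$-power-sized solubilizer is automatically a subgroup.

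The main obstacle, and where I would spend the most care, is justifying that $|\SolG{x}| = p^n$ implies $\SolG{x}$ is a subgroup: one must show that for $y, z \in \SolG{x}$ the element $yz \in \SolG{x}$, i.e. $\gen{x, yz}$ is soluble. This does not follow formally from $\gen{x,y}$ and $\gen{x,z}$ both being soluble, but here $\SolG{x}$ has $p$-power order, so every element of $\SolG{x}$ is a $p$-element lying in some Sylow $p$-subgroup of $G$; combined with Lemma~\ref{solvable def} one shows all of $\SolG{x}$ lies in a single Sylow $p$-subgroup $P$ of $G$ (using that distinct Sylows intersecting $\gen{x}$ would inflate the solubilizer), whence $\SolG{x} = P \cap \SolG{x}$ is a subgroup. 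Once that is in place, Lemmas~\ref{soluble sol} and~\ref{power of prime} finish the odd-prime case, and for $p = 2$ one separately rules out $n = 1, 2$: $|\SolG{x}| = 2$ is impossible since $N_G(\gen{x}) \supsetneq \gen{x}$ already needs order $> 2$ when $|x| = 2$ (and $|x| \mid |\SolG{x}|$ forces $|x| \in \{1,2\}$), and $|\SolG{x}| = 4$ is excluded by a short argument examining whether $x$ has order $2$ or $4$ and using that $C_G(x) \le \SolG{x}$ together with $R(G) = 1$ precluding such a small solubilizer in an insoluble group. Assembling these cases gives the result for every prime $p$.
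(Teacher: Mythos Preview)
The paper does not give a proof of this lemma at all: it is simply quoted from \cite{Akbari3}, so there is no ``paper's own proof'' to compare against. I can therefore only assess your argument on its own terms.

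Your reduction to $R(G)=1$ via Lemma~\ref{quotient} and the deduction that $x$ is a nontrivial $p$-element from Lemma~\ref{normalizer}(b) are fine, and the case $|\SolG{x}|=p$ is indeed immediate from the strict containment $\gen{x}\subsetneq N_G(\gen{x})\subseteq\SolG{x}$. The serious gap is in your proposed mechanism for the remaining case. You assert that if $|\SolG{x}|=p^n$ then ``every element of $\SolG{x}$ is a $p$-element'', and from this you want to conclude $\SolG{x}$ is a subgroup and invoke Lemma~\ref{power of prime}. But $\SolG{x}$ is merely a set: the fact that it has $p^n$ elements does not force its members to have $p$-power order. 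For $y\in\SolG{x}$ one only gets $\gen{y}\subseteq\SolG{x}$, hence $|y|\le p^n$, not $|y|\mid p^n$; there is no Lagrange-type divisibility available here. Likewise, your parenthetical that ``distinct Sylows intersecting $\gen{x}$ would inflate the solubilizer'' does not establish that $\SolG{x}$ sits inside a single Sylow $p$-subgroup. So the bridge from ``$|\SolG{x}|$ is a $p$-power'' to ``$\SolG{x}$ is a subgroup'' is missing, and without it Lemma~\ref{power of prime} cannot be applied.

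There is also a warning sign that this route cannot succeed in the generality you are aiming for: the paper itself exhibits, just before the lemma, an element $x\in\mathrm{PSL}(2,31)$ with $\SolG{x}\cong D_{32}$ of order $2^5$. Thus $|\SolG{x}|=p^n$ with $p=2$ genuinely occurs, so any argument purporting to rule out all prime powers is doomed; the content of the lemma is specifically the cases $n=1$ and $n=2$. For $n=2$ you will need an argument tailored to that exponent (for instance, controlling $N_G(P)$ for a Sylow $p$-subgroup $P\ni x$ and using that $|N_G(P)|\le p^2$ forces structural constraints incompatible with $R(G)=1$), rather than the general ``solubilizer is a subgroup'' reduction.
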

In Section \ref{cors}, we will generalize Lemma \ref{primepower} in the case where $p$ is an odd prime in all minimal simple groups.

In the sequel, we provide an applicable way to find the solubilizers of elements in the minimal simple groups which will be our goal in Section \ref{class}.

Clearly, to use Lemma \ref{solvable def}, we can find $\SolG{x}$ just as the union of all maximal soluble subgroups containing $x$.
\begin{Lemma}\label{sol in minimal simple}
Let $G$ be a minimal simple group and $x$ an element of $G$. Then $\SolG{x}=\bigcup_{x\in M< G}M$, where the union ranges over the maximal subgroups of $G$ containing $x$.
\end{Lemma}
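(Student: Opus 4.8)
The plan is to combine Lemma~\ref{solvable def} with the key structural feature of minimal simple groups: every proper subgroup is soluble. By Lemma~\ref{solvable def}, we have $\SolG{x}=\bigcup_{x\in H\le G} H$, where the union ranges over all soluble subgroups of $G$ containing $x$. Since $G$ itself is simple non-abelian, hence insoluble, every soluble subgroup $H$ containing $x$ is a \emph{proper} subgroup of $G$, and is therefore contained in some maximal subgroup $M$ of $G$ with $x\in M$. Conversely, because $G$ is a minimal simple group, every proper subgroup of $G$ is soluble; in particular every maximal subgroup $M$ of $G$ is soluble, and each such $M$ containing $x$ appears (as itself) in the union of Lemma~\ref{solvable def}.

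More precisely, I would argue by mutual containment. For the inclusion $\SolG{x}\subseteq\bigcup_{x\in M\le G}M$: take $y\in\SolG{x}$, so $\gen{x,y}$ is soluble; since $G$ is insoluble, $\gen{x,y}$ is a proper soluble subgroup, so it lies in some maximal subgroup $M$, which then contains both $x$ and $y$, giving $y\in\bigcup_{x\in M\le G}M$. For the reverse inclusion: if $y\in M$ for some maximal subgroup $M$ containing $x$, then $M$ is a proper subgroup of the minimal simple group $G$, hence soluble, so $\gen{x,y}\le M$ is soluble and $y\in\SolG{x}$. This establishes the claimed equality.

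One should note that the union is nonempty and well-defined: the cyclic subgroup $\gen{x}$ is proper (as $G$ is non-abelian simple, indeed non-cyclic), hence contained in a maximal subgroup, so at least one maximal subgroup contains $x$. There is essentially no obstacle here — the statement is a direct consequence of Lemma~\ref{solvable def} together with the defining property of minimal simple groups — so the "hard part" is merely bookkeeping the insolubility of $G$ to ensure that the soluble subgroups in Lemma~\ref{solvable def} are all proper, and hence sit inside maximal subgroups. The real work of the paper is then the explicit determination, case by case over Thompson's list, of which maximal subgroups contain a given element $x$, which is carried out in the tables of Section~\ref{class} rather than in this lemma.
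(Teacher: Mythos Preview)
Your proposal is correct and follows essentially the same approach as the paper's one-line proof, which simply observes that all maximal subgroups of a minimal simple group are soluble so that the union in Lemma~\ref{solvable def} may be taken over maximal subgroups. You have just spelled out the mutual containment more carefully than the paper does.
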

\begin{proof}
Since all maximal subgroups of a minimal simple group are soluble, it suffices to simply take the union of all maximal subgroups of $G$ containing $x$.
\end{proof}
To compute all maximal subgroups containing a certain element in a minimal simple group in Section \ref{class}, we will use the following lemmas and the fact that given a finite group $G$ and a subgroup $H\le G$, the number of conjugates of $H$ in $G$ is $[G:N_G(H)]$. Assuming that $G$ is a simple group and $M\le G$ is a maximal subgroup of $G$, there are $[G:M]$ conjugate subgroups of $M$ is $G$ because $N_G(M)=M$.
\begin{Lemma}[\cite{HaiReuven}*{Lemma 2.11}]\label{solcong}
    Let $G$ be a finite group and suppose $x,y\in G$ are conjugate elements such that $x=g^{-1}yg.$ Then, $\SolG{x}=g^{-1}\SolG{y}g.$
\end{Lemma}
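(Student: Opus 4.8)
The plan is to unwind the definition of the solubilizer and exploit the fact that conjugation by a fixed element $g\in G$ is an automorphism of $G$, and hence carries soluble subgroups bijectively to soluble subgroups. Concretely, for an arbitrary $z\in G$ I would establish the chain of equivalences
\[
z\in g^{-1}\SolG{y}g \iff gzg^{-1}\in \SolG{y} \iff \langle y, gzg^{-1}\rangle\text{ soluble} \iff \langle g^{-1}yg, z\rangle\text{ soluble} \iff z\in \SolG{x},
\]
where the middle step uses that conjugating $\langle y, gzg^{-1}\rangle$ by $g^{-1}$ yields $\langle g^{-1}yg,\, g^{-1}(gzg^{-1})g\rangle=\langle x,z\rangle$ (recalling $x=g^{-1}yg$), an isomorphic copy, so one subgroup is soluble exactly when the other is. Reading this chain in both directions gives the set equality at once.

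An alternative route, more in the spirit of the preceding lemmas, is to invoke Lemma \ref{solvable def} to write $\SolG{y}$ as the union of all soluble subgroups $H\le G$ containing $y$, and then conjugate termwise: $g^{-1}\SolG{y}g=\bigcup_{y\in H} g^{-1}Hg$. Since $H\mapsto g^{-1}Hg$ is a bijection between the soluble subgroups of $G$ containing $y$ and the soluble subgroups of $G$ containing $g^{-1}yg=x$, this union is precisely $\bigcup_{x\in K}K=\SolG{x}$.

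There is no genuine obstacle here; the statement is a routine equivariance property of the solubilizer. The only points requiring a little care are bookkeeping with the direction of the conjugation — the hypothesis is $x=g^{-1}yg$ rather than $y=g^{-1}xg$ — and ensuring one proves an actual equality of sets rather than a single inclusion, which the bijectivity of $H\mapsto g^{-1}Hg$ (equivalently, running the equivalence chain in both directions) handles immediately.
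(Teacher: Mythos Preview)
Your argument is correct; the equivalence chain is clean and the bookkeeping with the direction of conjugation is right. Note that the paper does not actually prove this lemma---it is merely cited from \cite{HaiReuven}---so there is no in-paper proof to compare your approach against.
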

Note that the following result can be also true for any subgroup of a finite group $G$.
\begin{Lemma}\label{formula}
    Let $M$ be a maximal subgroup of a finite group $G$ and $x$ an element of $M$ with $m$ conjugates in $M$ and $n$ conjugates in $G$. 
    Suppose that the number of subgroups of $G$ conjugate to $M$ is $r$. Then the number of subgroups conjugate to $M$ containing $x$ is equal to $mr/n$.
\end{Lemma}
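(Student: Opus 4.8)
The plan is to prove the identity by a single double count of the set
$$U \;=\; \{\, g \in G \;:\; x \in M^{g} \,\} \;=\; \{\, g \in G \;:\; gxg^{-1} \in M \,\},$$
where $M^{g} := g^{-1}Mg$. Here I read the hypothesis ``$x$ has $m$ conjugates in $M$'' as $m = |x^{G}\cap M|$, the number of elements of $M$ lying in the conjugacy class $x^{G}$ of $x$ in $G$ (so that, consistently, ``$n$ conjugates in $G$'' means $n = |x^{G}\cap G| = |x^{G}|$). The two counts come from partitioning $U$ first according to the value $y := gxg^{-1}$, and then according to the conjugate subgroup $M' := M^{g}$, after which one equates.

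For the first count: if $g\in U$ then $y = gxg^{-1}$ lies in $x^{G}\cap M$, and conversely every $y\in x^{G}\cap M$ is of this form for some $g\in U$. Fixing such a $y$ and writing $y = g_{0}xg_{0}^{-1}$, the solutions of $gxg^{-1}=y$ are exactly the left coset $g_{0}\,C_{G}(x)$, which has $|C_{G}(x)| = |G|/|x^{G}| = |G|/n$ elements by orbit--stabilizer. These cosets are pairwise disjoint as $y$ ranges over $x^{G}\cap M$, hence $|U| = m\cdot |G|/n$.

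For the second count: if $g\in U$ then $x\in M^{g}$, so $M^{g}$ is one of the conjugates of $M$ that contain $x$; call the number of these $s$. Fixing a conjugate $M' = M^{g_{0}}$ of $M$, the solutions of $M^{g}=M'$ form the right coset $N_{G}(M)\,g_{0}$, of size $|N_{G}(M)| = |G|/r$, using the stated fact that $M$ has $r = [G:N_{G}(M)]$ conjugates in $G$. Summing over the $s$ conjugates of $M$ containing $x$ gives $|U| = s\cdot |G|/r$. Comparing the two values of $|U|$ yields $s = mr/n$, which is the claimed count; note that maximality of $M$ is never used, it is only the setting for the applications.

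There is no real obstacle here beyond careful bookkeeping: the one point that needs attention is fixing the conjugation convention and recognizing that the reading of ``$m$ conjugates in $M$'' that makes the formula correct is $|x^{G}\cap M|$ rather than $|x^{M}|$ (these differ in general). Once that is settled, the whole argument is two applications of orbit--stabilizer --- one to $C_{G}(x)$ and one to $N_{G}(M)$ --- glued together by the common count of $U$.
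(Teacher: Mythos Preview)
Your proof is correct and is essentially the same double-counting argument as the paper's: the paper counts incidence pairs $(y,M')$ with $y\in x^{G}$, $M'$ a conjugate of $M$, and $y\in M'$, obtaining $mr=nk$, whereas you count the set $U=\{g\in G: gxg^{-1}\in M\}$ and evaluate its two natural projections via orbit--stabilizer, which yields the same equation with both sides scaled by a common factor. Your clarification that $m$ must be read as $|x^{G}\cap M|$ (rather than $|x^{M}|$) is exactly the implicit hypothesis the paper's proof uses as well.
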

\begin{proof}
    Since we are counting over conjugates of $x$, the number of subgroups conjugate to $M$ that each conjugate of $x$ is contained in must be equal, say to some constant $k$. Then we can count the number of conjugates of $x$ in $G$ in two ways. By assumption we know that this number is $n$. We can also count this by counting over individual subgroups $M\leq G$. So we would get that the number of conjugates of $x$ in $M$ would equal the number of conjugates of $M$ in $G$ multiplied by the number of conjugates of $x$ in $M$ and then divided by $k$ since this would count every conjugate of $x$ exactly $k$ times. This shows that the number of conjugates of $x$ in $G$ would be $n=mr/k$ so $k=mr/n$.
\end{proof}
\begin{Lemma}\label{formula2}
    Let $M$ be a maximal subgroup of a finite group $G$ and suppose $x$ is an element of $G$ contained in a unique subgroup isomorphic to $C_s$ for some $s> 4$. Assume that the number of conjugate subgroups of $C_s$ in $M$ is $m$ and the number of conjugate subgroups of $M$ in $G$ is $r$. Let $n$ be the number of conjugate subgroups of $C_s$ in $G$. Then the number of subgroups conjugate to $M$ containing $x$ is equal to $mr/n$.
\end{Lemma}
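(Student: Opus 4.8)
The plan is to mimic the proof of Lemma \ref{formula}, but to count the conjugates of $M$ that contain the distinguished cyclic subgroup rather than the conjugates of $M$ that contain the element $x$, and then to check that these two counts agree. So first I would fix $C$ to be the (unique) subgroup of $G$ with $C\cong C_s$ and $x\in C$, and note that $n$ is the number of $G$-conjugates of $C$, while $r$ is the number of $G$-conjugates of $M$.

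The first real step is the reduction: for every $g\in G$ one has $x\in M^{g}$ if and only if $C\le M^{g}$. One implication is trivial because $x\in C$. For the converse, suppose $x\in M^{g}$, equivalently $x^{g^{-1}}\in M$. Since conjugation by $g^{-1}$ is an automorphism of $G$, the subgroup $C^{g^{-1}}$ is again the unique copy of $C_{s}$ in $G$ containing $x^{g^{-1}}$; using the hypotheses $s>4$ and the uniqueness of $C_{s}$ through $x$ (together with the fact that $M$, as a maximal subgroup containing $x^{g^{-1}}$, contains a copy of $C_{s}$ through $x^{g^{-1}}$), one forces $C^{g^{-1}}\le M$, i.e.\ $C\le M^{g}$. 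Consequently the set of conjugates of $M$ containing $x$ coincides with the set of conjugates of $M$ containing $C$, and it suffices to count the latter.

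For the counting step I would repeat verbatim the orbit-symmetry argument of Lemma \ref{formula}, now applied to the subgroup $C$ in place of the element $x$. The group $G$ acts transitively by conjugation on the $r$ subgroups conjugate to $M$ and on the $n$ subgroups conjugate to $C_{s}$, and the incidence relation ``$D\le K$'' between a conjugate $D$ of $C_{s}$ and a conjugate $K$ of $M$ is $G$-invariant; hence every conjugate $D$ of $C_{s}$ lies in the same number $k$ of conjugates of $M$, and every conjugate $K$ of $M$ contains the same number $m$ of conjugates of $C_{s}$ (this last quantity being exactly what the statement calls the number of conjugate subgroups of $C_{s}$ in $M$). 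Double counting the incident pairs $(D,K)$ gives $nk=rm$, so $k=mr/n$, and combining with the reduction we conclude that the number of subgroups conjugate to $M$ containing $x$ equals $mr/n$.

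The main obstacle is the reduction step, namely showing that a conjugate of $M$ containing $x$ must already contain the globally unique copy of $C_{s}$ through $x$: when $x$ itself has order $s$ this is immediate since $C=\langle x\rangle$, and the point of the hypotheses ($x$ lying in a unique $C_{s}$, and $s>4$) is precisely to guarantee this behaviour in the remaining cases. Once that is in hand, the counting is a routine repeat of the argument already used for Lemma \ref{formula}.
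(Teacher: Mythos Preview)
Your approach is exactly the paper's: its proof is the single sentence ``This follows in the same way as Lemma~\ref{formula} except we count conjugates of the entire subgroup $C_s$ instead of a single element,'' and your double-counting of incident pairs $(D,K)$ is precisely that argument spelled out.

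Where you go beyond the paper is in making the reduction step explicit, and you are right that this is the only place any content lies. However, your justification of that step contains a genuine gap: you assert that ``$M$, as a maximal subgroup containing $x^{g^{-1}}$, contains a copy of $C_s$ through $x^{g^{-1}}$,'' but nothing in the stated hypotheses forces this. Knowing that $x$ lies in a unique $C_s$ \emph{in $G$} does not tell you that an arbitrary maximal subgroup containing a conjugate of $x$ must contain a cyclic subgroup of order $s$ through it; a priori $M$ could contain $x^{g^{-1}}$ while $\langle x^{g^{-1}}\rangle$ sits in no $C_s$ inside $M$. The hypothesis $s>4$ by itself does nothing here. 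The paper's proof glosses over exactly the same point, and in the applications (Theorems~\ref{2p}--\ref{sz}) the gap is closed separately: one first proves that for the relevant $x$ one has $N_G(\langle x\rangle)=N_G(C)\cong D_{2(q\pm1)}$ (or the analogue), so that any subgroup containing $x$ and normalised suitably already contains $C$. In other words, the lemma as stated is somewhat informal, and your reduction works cleanly only in the case $|x|=s$ (where $C=\langle x\rangle$) or once one adds an assumption such as ``every conjugate of $M$ containing $x$ contains $C$,'' which is what is actually verified in the paper's uses of the lemma.
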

\begin{proof}
    This follows in the same way as Lemma \ref{formula} except we count conjugates of the entire subgroup $C_s$ instead of a single element.
\end{proof}

\end{section}

\begin{section}{Classification Theorems}\label{class}

In this section, we classify $\SolG{x}$ for any element $x$ in a minimal simple group $G$. According to Lemma \ref{sol in minimal simple}, this set is the union of maximal subgroups containing $x$. More precisely, to find $\SolG{x}$, we need to count the number of maximal subgroups containing $x$ and give the pairwise intersections. Thus, for each of the five types of minimal simple groups \cite{Thompson}*{Corollary 1}, we produce a table which classifies the solubilizers.

\begin{Theorem}\label{2p}
    Let $G=\PSL(2, 2^p)$ where $p$ is a prime and let $q:=2^p$. Then the following table classifies all subsets $\SolG{x}$ for $x\in G$ based on the order of $x$ in terms of the number of maximal subgroups of each isomorphism type contained in $\SolG{x}$, their pairwise intersections and $|\SolG{x}|$:
\end{Theorem}
\begin{table}[H]
    \centering
    \begin{tabular}{| c || c | c | c|} 
     \hline
     \quad Maximal Subgroup & $|x| = 2$ & $|x| \divides q-1$ & $|x| \divides q+1$  \\ [0.5ex] 
     \hline\hline
     $\cc$ & 1 & 2 & 0  \\ 
     \hline
     $D_{2(q-1)}$ & $q/2$& 1 & 0  \\
     \hline
     $D_{2(q+1)}$ & $q/2$ & 0 & 1   \\
     \hline \hline
     Intersections & $\cong C_2$ & $\cong C_{q-1}$ & N/A \\
     \hline
     $|\SolG{x}|$ & $3q(q-1)$ & $2q(q-1)$ & $2(q+1)$ \\
     \hline
    \end{tabular}
    \caption{Classification of $\SolG{x}$ for $G=\PSL(2, 2^p)$}
    \label{tbl2,2p}
\end{table}

\begin{proof}
    We obtain the classification of the maximal subgroups of $G$ from \cite{King}*{Corollary 2.2}, which also gives that there is only one conjugacy class of each. Since $|G|=q(q-1)(q+1)$, an element $x\in G$ has order $|x|$ dividing $q,q-1,$ or $q+1.$ Since $q-1$ and $q+1$ are both odd, the only even order elements in $D_{2(q+1)}$ and $D_{2(q-1)}$ are involutions. Similarly, the elements of $\cc$ have order either $2$ or some divisor of $q-1.$ Hence, we know that the only even order elements of $G$ are involutions. Further, note that $\gcd(q-1,q+1)=1$ since $q\pm 1$ is odd. Hence, if $x$ is not the identity, then $|x|$ is in exactly one of the cases $|x|=2,|x|\divides q-1$ or $|x|\divides q+1.$ 
    
    We can see from \cite{King}*{Theorem 2.1} that there are $q+1$ conjugate abelian Sylow $2$-subgroups in $G$ which contain all the involutions of $G$.  Since any of these Sylow $2$-subgroups has $q-1$ involutions, the total number of involutions in $G$ is $q^2-1$.  All of these involutions are conjugate so their solubilizers are conjugate by Lemma \ref{solcong}. Further, from the same source, we know that there are $q(q+1)/2$ subgroups of $G$ isomorphic to $C_{q-1}$ which are all conjugate, and similarly there are $q(q-1)/2$ conjugate subgroups isomorphic to $C_{q+1}$. Now, we claim that if $|x|\divides q\pm 1$, then $x$ is contained in a unique subgroup isomorphic to $C_{q\pm 1}.$ 
    
    This is because, if $|x|\divides q\pm 1$, then $|\gen{x}|$ is odd and we can see from the subgroups of the maximal subgroups of $G$ that the only odd order subgroups are contained in subgroups isomorphic to $C_{q\pm 1}.$ Then, let $y\in G$ generate the $C_{q\pm 1}$ that contains $x.$ So we can write $x=y^k$ for some $k\in \mathbb{N}.$ Now, it is easy to see that $N_G(\gen{y}) \le N_G(\gen{x})$.

     Each maximal subgroup isomorphic to $D_{2(q\pm 1)}$ contains exactly one $C_{q\pm 1}$ subgroup. Since all the $C_{q\pm 1}$ subgroups are conjugate, they must each be contained in some $D_{2(q\pm 1)}$ because at least one $C_{q\pm 1}$ is contained in a $D_{2(q\pm 1)}$ and they all behave in the same way. Since $C_{q\pm1}$ is normal in any $D_{2(q\pm 1)}$ and $D_{2(q\pm 1)}$ is maximal in $G$, we can get that $N_G(C_{q\pm 1})\cong D_{2(q\pm 1)}$ because $G$ is a simple group. Thus each $C_{q\pm 1}$ is contained in exactly one $D_{2(q\pm 1)}$ because otherwise $D_{2(q\pm 1)}\lneq N_G(C_{q\pm 1}).$

    Hence, $D_{2(q\pm 1)} = N_G(\gen{y}) \le N_G(\gen{x})$ and so we must have equality since $D_{2(q\pm 1)}$ is maximal in $G$. It follows that $x$ is contained in exactly one subgroup isomorphic to $D_{2(q\pm 1)}$ which means it is contained in exactly one subgroup isomorphic to $C_{q\pm 1}$, as claimed.

    Now, we calculate the number of maximal subgroups of each isomorphism type that contain $x$ using the fact that each isomorphism type consists of one conjugacy class of subgroups and the number of conjugates of the maximal subgroup $M$ in $G$ is $[G:M]$. There are $q+1$ conjugates of $\cc$, $q(q-1)/2$ conjugates of $D_{2(q+1)}$, and $q(q+1)/2$ conjugates of $D_{2(q-1)}$. Finally, we use Lemmas \ref{formula} and \ref{formula2} to calculate the number of maximal subgroups of each type containing $x$.
    
    For the element $x$ that is an involution, there are $q^2-1$ conjugates of $x$ in $G,$ and there are $q-1$, $q+1$, and $q-1$ conjugates of $x$ in $\cc, D_{2(q+1)}$, and $D_{2(q-1)}$ respectively. Similarly there are $q+1$, $q(q-1)/2$, and $q(q+1)/2$ subgroups of $G$ isomorphic to $\cc, D_{2(q+1)},$ and $D_{2(q-1)}$ respectively. So $x$ is contained in $\frac{(q-1)(q+1)}{q^2-1}=1$ subgroup isomorphic to $\cc, \frac{(q+1)(q(q-1)/2)}{q^2-1}=q/2$ subgroups isomorphic to $D_{2(q+1)}$, and $\frac{(q-1)(q(q+1)/2)}{q^2-1}=q/2$ subgroups isomorphic to $D_{2(q-1)}.$

    For $|x|\divides q\pm 1,$ we already have that $x$ is contained in exactly one subgroup isomorphic to $D_{2(q\pm 1)}.$ Then, if $|x|\divides q-1,$ it is also contained in $\frac{(q+1)q}{q(q+1)/2}=2$ subgroups isomorphic to $\cc.$ Hence, we get Table \ref{tbl2,2p} produced at the beginning of the proof.

    Next, we turn our attention to the intersections between these maximal subgroups that contain $x.$ In the case of the involutions, the intersection at least contains $C_2$. Note that $q\pm 1$ is odd and so $C_2$ is maximal in $D_{2(q\pm 1)}$ so the intersections must all be exactly isomorphic to $C_2.$ Similarly, for $|x|\divides q-1,$ the intersections must contain the unique $C_{q-1}$ that contains $x$. Clearly, $C_{q- 1}$ is maximal in $D_{2(q-1)}$ so again the intersections must all be isomorphic to $C_{q-1}.$ Finally, we calculate $|\SolG{x}|$ for each column using the principle of inclusion exclusion (PIE) as follows:

    \[\Bigg{|}\bigcup_{i=1}^m M_i\Bigg{|} = \sum_{i=1}^m |M_i| - \sum_{1\le i<j\le m}|M_i\cap M_j| + \sum_{1\le i<j<k\le m}|M_i\cap M_j\cap M_k| - \ldots  + (-1)^{m-1}|M_1\cap M_1\cap \cdots M_m|,\]
    where $M_1, \cdots, M_m$ are the maximal subgroups containing $x$.
    
    For $|x|\divides q+1$, there are no intersections so $|\SolG{x}|=|D_{2(q+1)}|=2(q+1)$. For $|x|\divides q-1$, the pairwise intersection between any of the three maximal subgroups containing $x$ has order $|C_{q-1}|=q-1$ and the three-way intersection between all three also has order $q-1$. So $$|\SolG{x}|=2\cdot |\cc|+1\cdot |D_{2(q-1)}|-3\cdot |C_{q-1}|+1\cdot |C_{q-1}|$$ $$\ \ \ \ \ \ \ \ \ \ =2q(q-1)+2(q-1)-3(q-1)+(q-1)=2q(q-1).$$ Finally, let $x$ be an involution. First note that there are $m=1+q/2+q/2=q+1$ maximal subgroups containing $x$ and if we choose any $i$ different subgroups of these maximal subgroups, the intersection has order $2$. Then we have:
     \[|\SolG{x}|=\Bigg{|}\bigcup_{i=1}^{q+1}
     M_i\Bigg{|}=1\cdot q(q-1)+q/2\cdot 2(q-1)+q/2\cdot 2(q+1)-2\binom{q+1}{2}+2\binom{q+1}{3}
    \]
    \[-\cdots+(-1)^q\cdot 2\binom{q+1}{q+1}=3q^2-q-2\Bigg{(}\sum_{i=2}^{q+1}(-1)^i\binom{q+1}{i} \Bigg{)}
    \]
    \[=3q^2-q-2\Bigg{(}(1-1)^{q+1}-1-(-1)q+1\Bigg{)}=3q^2-q-2q=3q(q-1).\]
    
    This completes the classification of the table for $G=\PSL(2, 2^p).$
\end{proof}

\begin{Theorem}\label{3p}
    Let $G=\PSL(2,3^p)$ where $p$ is an odd prime and denote $q:=3^p$. Then the following table classifies all subsets $\SolG{x}$ for $x\in G$ based on the order of $x$ in terms of the number of maximal subgroups of each isomorphism type contained in $\SolG{x}$, their pairwise intersections and $|\SolG{x}|$:
\end{Theorem}
\begin{table}[H]
    \centering
\begin{tabular}{| c || c | c | c | c|} 
 \hline
 \quad Maximal Subgroup &  $|x| = 2$ & $|x| = 3$ & $|x| \divides q-1$ & $|x| \divides q+1$  \\ [0.5ex] 
 \hline\hline
 $\ccc$ & 0 & 1 & 2 & 0  \\ 
 \hline
 $D_{q-1}$ & $(q+1)/2$& 0 & 1 & 0  \\
 \hline
 $D_{q+1}$ & $(q+3)/2$ & 0 & 0 & 1  \\
 \hline
 $\rm{A}_4$ & $(q+1)/4$ & $q/3$ & 0 & 0 \\
 \hline \hline
 Intersections & $\cong C_2$ or $C_2^2$ & $\cong C_3$ & $\cong C_{(q-1)/2}$ & N/A \\
 \hline
 $|\SolG{x}|$ & $q(q+1)$ & $q(q+5)/2$ & $q(q-1)$ & $q+1$ \\
 \hline
\end{tabular}
    \caption{Classification of $\SolG{x}$ for $G=\PSL(2,3^p)$}
    \label{tbl2,3p}
\end{table}

\begin{proof}
    First, we note that the last two columns of Table \ref{tbl2,3p} are the exact same as those in Table \ref{tbl2,2p} but with $q\pm 1$ replaced by $\frac{q\pm 1}{2}$ and the proof follows in the same way as in Theorem \ref{2p}. It is important to highlight that when writing  $|x|\divides q\pm 1$, we mean $|x|\divides q\pm 1$ but $|x|\neq2$ which is treated in the first column. For the first two columns of Table \ref{tbl2,3p}, we apply Lemmas \ref{formula} and \ref{formula2} and the fact that the number of conjugates of maximal subgroup $M$ in $G$ is $[G:M]$, in the same way as in the proof of Theorem \ref{2p}. We also use the fact that, as before, all the order $2$ elements are conjugate, all the $C_3$ subgroups are conjugate, and similarly \cite{King}*{Corollary 2.2} for the number of maximal subgroups of each isomorphism type. Note that each involution is contained in $(q+3)/2$ subgroups isomorphic to $D_{q+1}$ because $D_{q+1}$ has $(q+3)/2$ involutions in $2$ conjugacy classes, namely, $(q+1)/2$ reflections and $1$ rotation.
    
    If $|x|=3,$ the intersections must all contain $C_3$ and since $C_3$ is maximal in $\rm{A}_4,$ this is precisely what all the intersections must be. For $|x|=2,$ the intersections must all contain $C_2$ but $C_2$ is not maximal in $D_{q+1}$ and $\rm{A}_4$ so more care must be taken. We proceed in cases.

    As $q=3^p$ and $p$ is odd, we see that $q\equiv 3 \pmod 4$, so $q-1 \not \equiv 0 \pmod 4$. In particular, this means that $2\ndivides \frac{q-1}{2}$ so $C_2$ is maximal in $D_{q-1}$ because all subgroups of $D_{2n}$ are of the form $C_d$ or $D_{2d}$ for $d$ as a divisor of $n$. Thus, the intersection of $D_{q-1}$ and any other maximal subgroup containing $x$ must be isomorphic to $C_2.$

    For the intersection between two subgroups isomorphic to $\rm{A}_4,$ we note that $C_2$ is not maximal in $\rm{A}_4$ and is instead contained in a maximal $C_2^2$ subgroup. However, $C_2^2\lhd \rm{A}_4$ so $N_G(C_2^2)\cong \rm{A}_4$ since $\rm{A}_4$ is maximal in $G$ which is simple. If $M_1$ and $M_2$ are two maximal subgroups isomorphic to $\rm{A}_4$ such that  
    $M_1\cap M_2\cong C_2^2$, then we would have $M_1=N_G(M_1\cap M_2)=M_2$ contradicting the fact that $M_1$ and $M_2$ were assumed to be distinct subgroups containing $x.$ Thus, we can conclude that the intersection of any two $\rm{A}_4$ subgroups containing $x$ is also just $C_2.$

    Finally, we consider the intersections between two copies of $D_{q+1}$, and between $D_{q+1}$ and $\rm{A}_4$. Now, $q+1\equiv 0 \pmod 4$ and so each subgroup $D_{q+1}$ contains a single involution which is in a different conjugacy class to the rest of the involutions, namely a rotation of order $2$ rather than a reflection. Note that involutions are conjugate in $G$ and we already showed that each involution is contained in $(q+3)/2$ subgroups isomorphic to $D_{q+1}$. Thus, we see that each involution is contained in exactly one $D_{q+1}$ in which it is the singleton  conjugacy class of a rotational involution and $(q+1)/2$ copies of $D_{q+1}$ where it is in the conjugacy class of $q+1$ reflections. We call this $D_{q+1}$ the special $D_{q+1}$ corresponding to $x$, and denote it $D_{q+1}^s$ for ease of reference.
    
    From the properties of the dihedral group we see that $x$ is contained in $\frac{q+1}{4}$ different copies of $C_2^2$ in $D_{q+1}^s$. We also know that in each copy of $\rm{A}_4$ containing $x$ we have exactly one $C_2^2$ by the structure of $\rm{A}_4$. Further, every $C_2^2$ in $G$ must be contained in some $\rm{A}_4$ since all order $2$ elements are conjugate so all copies of $C_2^2$ are conjugate so behave in the same way. This implies that the $C_2^2$ subgroups are in a one-to-one correspondence with the $\rm{A}_4$ subgroups, with each $\rm{A}_4$ containing a distinct $C_2^2$. Then, since $D_{q+1}^s$ contains $\frac{q+1}{4}$ copies of $C_2^2$ containing $x$, this special dihedral group contains every $C_2^2$ containing $x$.
    
    This shows that the intersection of the special $D_{q+1}$ with any of the copies of $\rm{A}_4$ must be $C_2^2$. For every non-special $D_{q+1}$, we see that $x$ is not in a singleton conjugacy class, so it is contained in exactly one $C_2^2$, namely the one generated by $x$ and the rotational involution in that $D_{q+1}$. This implies that the intersection between $D_{q+1}^s$ and any of the other $D_{q+1}$s containing $x$ must be $C_2^2$. The reason that it cannot be in a larger dihedral group, is because if there was such a subgroup $D_{2d}\le D_{q+1}$ where $d\divides q+1$, then there would be an element $y\in D_{2d}$ of order $d>2$ which is contained in two copies of $D_{q+1}$. We showed this was impossible when considering the last column of the table. 
    
     Now we claim that for every $\rm{A}_4$ there are exactly two of the non-special $D_{q+1}$s with intersection $C_2^2$. Clearly, every non-special $D_{q+1}$ has intersection equal to $C_2^2$ with exactly one $\rm{A}_4$ since each $C_2^2$ corresponds to exactly one $\rm{A}_4$ and each non-special $D_{q+1}$ contains one $C_2^2$ subgroup containing $x.$
    
    In a non-special $D_{q+1}$, any subgroup isomorphic to $C_2^2$ that contains $x$ is of the form $\{1,x,s,xs\}$ where $s$ is the rotational involution of that $D_{q+1}$. This implies that if a given $\rm{A}_4$ has intersection larger than $\gen{x}=\{1,x\}$ with more than $2$ of these non-special $D_{q+1}$, then these two $D_{q+1}$s have the same rotational involution which is not possible since all of the involutions are conjugate and each is the rotational involution in exactly one $D_{q+1}$. This means that each $\rm{A}_4$ shares a $C_2^2$ with the special $D_{q+1}$ and two of the non-special $D_{q+1}$s. Note that the two non-special $D_{q+1}$ that share the $C_2^2$ with the same $\rm{A}_4$ will also share a $C_2^2$. This organizes these subgroups as shown in the figure with each edge representing a shared $C_2^2$ where there are $(q+1)/4$ copies of $\rm{A}_4.$

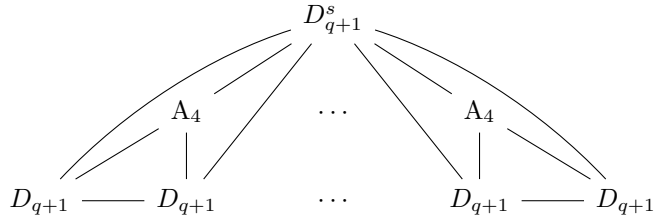
\begin{figure}[H]
    \centering
    \begin{tikzcd}
	&& {D_{q+1}^s} \\
	& {\rm{A}_4} & \cdots & {\rm{A}_4} \\
	{D_{q+1}} & {D_{q+1}} & \cdots & {D_{q+1}} & {D_{q+1}}
	\arrow[no head, from=2-2, to=3-1]
	\arrow[no head, from=2-2, to=3-2]
	\arrow[no head, from=2-4, to=3-4]
	\arrow[no head, from=2-4, to=3-5]
	\arrow[no head, from=3-1, to=3-2]
	\arrow[no head, from=3-4, to=3-5]
	\arrow[no head, from=1-3, to=2-2]
	\arrow[no head, from=1-3, to=2-4]
	\arrow[curve={height=12pt}, no head, from=1-3, to=3-1]
	\arrow[curve={height=-12pt}, no head, from=1-3, to=3-5]
	\arrow[no head, from=1-3, to=3-2]
	\arrow[no head, from=1-3, to=3-4]
\end{tikzcd}
    \caption{$C_2^2$ intersections of maximal subgroups containing an involution in $\PSL(2,3^p)$}
    \label{fig1}
\end{figure}

    As in the proof of Theorem \ref{2p}, we now use the principle of inclusion exclusion (PIE) to calculate $|\SolG{x}|.$ For the last three columns, this follows in the exact same way so we omit the calculations. However, for $|x|=2,$ we have a bit more work to do. First, we treat each intersection as having order $2$ and perform the same calculation as in Theorem \ref{2p}. 
    Then we account for the intersections which are $C_2^2$ having order $4$ by subtracting off an extra $2$ times the alternating sum of the number of such intersections. From Figure \ref{fig1}, we see that there are $6\cdot (q+1)/4$ pairwise intersections isomorphic to $C_2^2,$ there are $4\cdot (q+1)/4$ three-way intersections isomorphic to $C_2^2,$ and $1\cdot (q+1)/4$ four-way intersections isomorphic to $C_2^2$. 
     This gives us
    
     \[|\SolG{x}|=\frac{q+1}{2}\cdot(q-1)+\frac{q+3}{2}\cdot (q+1)+\frac{q+1}{4}\cdot 12 + 2\left(1-\left(\frac{q+1}{2}+\frac{q+3}{2}+\frac{q+1}{4}\right)\right)\] 
    \[+2\cdot (-6+4-1)\cdot \frac{q+1}{4}
    =q^2 + \frac{5q}{2} + \frac{3}{2}-3\cdot \frac{q+1}{2}=q^2+q.\ \ \ \ \ \ \ \ \ \ \ \ \  \]
    This completes the classification of the table for $\PSL(2,3^p).$
\end{proof}
Before considering the next case of the minimal simple groups, we state the following lemma which we will apply to find the size of solubilizer sets for involutions in a different way from inclusion exclusion principle because in this case the intersections between the maximal subgroups containing involutions are much more complicated to compute.
\begin{Lemma}\label{invol}
    Let $G$ be a finite group and $I(S)$ be the set of involutions of $S\subseteq G$. If all of the involutions in $G$ are conjugate and $x\in G$ is an involution, then
    \[|\SolG{x}|\cdot|I(G)|=\sum_{y\in G}|I(\SolG{y})|.\]
\end{Lemma}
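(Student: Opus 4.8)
The plan is to establish the identity by a double-counting argument applied to the set of pairs
\[
P = \{(y,z) \in G \times G \mid z \in I(\SolG{y})\},
\]
that is, the pairs in which $z$ is an involution of $G$ and $\langle x, z\rangle$ is soluble with $x$ replaced by $y$, i.e. $\langle y,z\rangle$ is soluble. First I would count $|P|$ by grouping the pairs according to their first coordinate $y$; this gives immediately
\[
|P| = \sum_{y \in G} |I(\SolG{y})|,
\]
which is exactly the right-hand side of the claimed formula.

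Next I would count $|P|$ by grouping according to the second coordinate. Fix an involution $z \in I(G)$. The key observation is the symmetry inherent in the definition of the solubilizer: since $\langle y,z\rangle = \langle z,y\rangle$, we have $z \in \SolG{y}$ if and only if $y \in \SolG{z}$. Consequently the number of $y \in G$ that pair with this fixed $z$ is precisely $|\SolG{z}|$, so
\[
|P| = \sum_{z \in I(G)} |\SolG{z}|.
\]

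Finally I would use the hypothesis that all involutions of $G$ are conjugate. Since $x$ is an involution, every $z \in I(G)$ is conjugate to $x$, say $z = g^{-1}xg$, and then Lemma \ref{solcong} gives $\SolG{z} = g^{-1}\SolG{x}g$, hence $|\SolG{z}| = |\SolG{x}|$ for every $z \in I(G)$. Substituting this into the second expression for $|P|$ yields $|P| = |I(G)|\cdot|\SolG{x}|$, and equating the two expressions for $|P|$ proves the lemma. There is no substantial obstacle in this argument; the only points requiring care are verifying that the symmetry $z \in \SolG{y} \iff y \in \SolG{z}$ makes the inner count in the second method exactly $|\SolG{z}|$, and that the conjugacy hypothesis is precisely what forces all these inner counts to coincide with $|\SolG{x}|$.
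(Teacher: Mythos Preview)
Your proof is correct and is essentially the same argument as the paper's: both use the symmetry $z\in\SolG{y}\iff y\in\SolG{z}$ to rewrite $\sum_{z\in I(G)}|\SolG{z}|$ as $\sum_{y\in G}|I(\SolG{y})|$, and both invoke Lemma~\ref{solcong} together with the conjugacy hypothesis to make each $|\SolG{z}|$ equal to $|\SolG{x}|$. Your presentation via the explicit double count of the pair set $P$ is slightly more transparent than the paper's terse version, but the underlying idea is identical.
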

\begin{proof}
Since all the involutions in $G$ are conjugate, any involution $z\in G$ is of the form $z=x^g$ for some $g\in G$. Now we can see from Lemma \ref{solcong} that $|\SolG{z}|=|\SolG{x}|$.  Thus $|\SolG{x}|\cdot|I(G)|=\sum_{z\in I(G)}|\SolG{z}|$. To compute $\sum_{z\in I(G)}|\SolG{z}|$, let $z$ be an involution of $G$. Given an element $y\in G$, if $y\in \SolG{z}$, then $z\in \SolG{y}$. Thus $z$ is an involution of $\SolG{y}$ and so $z\in I(\SolG{y})$. This completes the proof.
\end{proof}

\begin{Theorem}\label{p}
    Let $G=\PSL(2, p)$ where $p$ is a prime number such that $p \equiv 2$ or $3 \pmod 5$. Then the following tables classify all subsets $\SolG{x}$ for $x\in G$ based on the order of $x$ in terms of the number of maximal subgroups of each isomorphism type contained in $\SolG{x}$ and $|\SolG{x}|.$
\end{Theorem}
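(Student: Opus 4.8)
The plan is to follow the template of the proofs of Theorems \ref{2p} and \ref{3p}. From Dickson's classification of the subgroups of $\rm{PSL}(2,p)$, in the form recorded in \cite{King}, I would first read off the maximal subgroups of $G$ and the number of conjugacy classes of each. Then, for each possible order of $x$, I would determine precisely which maximal subgroups contain $x$ (using Lemmas \ref{formula} and \ref{formula2} together with the conjugacy of elements and of cyclic subgroups), describe their pairwise and higher intersections, and apply the principle of inclusion--exclusion to compute $|\SolG{x}|$. I expect the involution case to be the main obstacle, and I would handle it separately via Lemma \ref{invol}.

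For the structural setup: since $p\equiv 2$ or $3\pmod 5$ we get $5\divides p^2+1$, hence $5\nmid p^2-1$ and $5\nmid|G|=\tfrac{1}{2}p(p-1)(p+1)$, so $G$ has no subgroup isomorphic to $A_5$. Therefore the maximal subgroups of $G$ are the Borel subgroup $\cp=N_G(P)$ for $P$ a Sylow $p$-subgroup ($P\cong C_p$, and there are $p+1$ of them forming one conjugacy class), the dihedral groups $D_{p-1}$ and $D_{p+1}$ that normalize the two cyclic maximal tori $C_{(p-1)/2}$ and $C_{(p+1)/2}$, and a copy of $S_4$ when $p\equiv\pm1\pmod 8$ or of $A_4$ when $p\equiv\pm3\pmod 8$, the number of conjugacy classes of this last type being as given in \cite{King}. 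Since $(p-1)/2$ and $(p+1)/2$ are coprime consecutive integers, each coprime to $p$, the order of a nontrivial element divides exactly one of $p$, $(p-1)/2$, $(p+1)/2$; I would split into the cases $|x|=p$, $|x|$ dividing $(p\mp1)/2$ with $|x|\ge5$, the small orders $|x|\in\{2,3,4\}$ (which can also occur inside $A_4$ or $S_4$), and treat $|x|=2$ last. The statement involves several tables because the answer depends in addition on $p\bmod 8$ (whether the small maximal subgroup is $S_4$ or $A_4$) and on $p\bmod 3$ (whether $3$ divides $p-1$ or $p+1$).

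The generic cases are routine. If $|x|=p$ then $x$ lies in a unique Sylow $p$-subgroup, whose normalizer is the maximal Borel, which is then the unique maximal subgroup containing $x$, so $\SolG{x}=\cp$. If $|x|\ge5$ divides $(p\mp1)/2$, then since no element of $A_4$ or $S_4$ has order $\ge5$ and a regular semisimple element has centralizer equal to the unique maximal torus containing it, the proof of Theorem \ref{2p} transfers almost verbatim, with $C_{q\pm1}$ and $D_{2(q\pm1)}$ replaced by $C_{(p\mp1)/2}$ and $D_{p\mp1}$: $x$ lies in a unique such dihedral group, Lemmas \ref{formula} and \ref{formula2} count the Borels containing $x$, all intersections equal the torus through $x$, and inclusion--exclusion closes the column. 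For $|x|=3$, and (when $p\equiv\pm1\pmod 8$) $|x|=4$, the element is still in a unique dihedral subgroup but also lies in several $A_4$ or $S_4$ subgroups; one counts the latter with Lemma \ref{formula} using that all elements of the given order are conjugate, identifies the pairwise intersections (a $C_3$, or an $S_3$ when two copies of $S_4$ meet through $x$; a $C_4$ for order $4$), and again applies inclusion--exclusion.

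The hard part is the involution column. An involution $x$ lies in dihedral subgroups of both types and in the $A_4$ or $S_4$ subgroups, and the pairwise and higher intersections can be isomorphic to $C_2$, $C_2^2$, or $D_8$, depending on the conjugacy class $x$ represents inside each dihedral factor and on $p\bmod 8$; a direct inclusion--exclusion would require a bookkeeping of intersections in the spirit of Figure \ref{fig1} but with several more layers. My plan is to bypass this using Lemma \ref{invol}: once the other columns of the tables are established, $|I(\SolG{y})|$ is known for every $y\in G$, being the number of involutions in a Borel, a dihedral group, or an $A_4$ or $S_4$ according to the already-computed type of $\SolG{y}$. Hence $\sum_{y\in G}|I(\SolG{y})|$ is evaluated by grouping the finitely many solubilizer-types, and dividing by $|I(G)|$ --- legitimate because all involutions of $\rm{PSL}(2,p)$ are conjugate --- yields $|\SolG{x}|$ with no intersection of maximal subgroups ever computed directly. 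Working through this sum in each congruence case completes the classification.
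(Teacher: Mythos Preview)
Your plan matches the paper's proof almost exactly: the same case split by $|x|$, the same use of Lemmas \ref{formula} and \ref{formula2} for the generic columns, the same counting of $A_4$/$S_4$ subgroups for $|x|\in\{3,4\}$, and the same deployment of Lemma \ref{invol} to handle the involution column without computing intersections directly. One detail you will need to sharpen is the intersection analysis for $|x|\in\{3,4\}$ when $p\equiv\pm1\pmod 8$: the paper shows that the pairwise intersections among the $S_4$'s containing $x$ (and between each $S_4$ and the relevant $D_{p\pm1}$) are $D_{2|x|}$ rather than merely $C_{|x|}$, and this pairing of the $S_4$'s enters the inclusion--exclusion count.
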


\begin{center}

\begin{table}[H]
\begin{tabular}{| c || c | c | c | c | c | c |} 
 \hline
 \quad Maximal Subgroup & $|x| = 2$ & $|x| = 3$ & $|x| = 4 $ & $|x| = p$ & $|x| \divides p-1$ & $|x|\divides p+1$  \\ [0.5ex] 
 \hline\hline
 $\cp$ & 2 & 2 & 2 & 1 & 2 & 0 \\ 
 \hline
 $D_{p-1}$ & $(p+1)/2$ & 1 & 1 & 0 & 1 & 0 \\
 \hline
 $D_{p+1}$ & $(p-1)/2$ & 0 & 0 & 0 & 0 & 1 \\
 \hline
 $\rm{S}_4$ & $3(p-1)/4$ & $(p-1)/3$ & $(p-1)/4$ & 0 & 0 & 0 \\
 \hline \hline
 $|\SolG{x}|$ & $(p-1)(2p+3)$ & $(p-1)(p+6)$ & $(p-1)(p+4)$ & $p(p-1)/2$ & $p(p-1)$& $p+1$\\
 \hline
\end{tabular}

\caption{Classification of $\SolG{x}$ for $G=\PSL(2, p)$ when $p\equiv 1\pmod{24}$}
    \label{tbl2,p,1}
\end{table}

\begin{table}[H]

\begin{tabular}{| c || c | c | c | c | c |} 
 \hline
 \quad Maximal Subgroup & $|x| = 2$ & $|x| = 3$ & $|x| = p$ & $|x| \divides p-1$ & $|x|\divides p+1$  \\ [0.5ex] 
 \hline\hline
 $\cp$ & 2 & 0 & 1 & 2 & 0 \\ 
 \hline
 $D_{p-1}$ & $(p+1)/2$ & 0 & 0 & 1 & 0 \\
 \hline
 $D_{p+1}$ & $(p-1)/2$ & 1 & 0 & 0 & 1 \\
 \hline
 $\rm{A}_4$ & $(p-1)/4$ & $(p+1)/3$ & 0 & 0 & 0 \\
 \hline \hline
 $|\SolG{x}|$ & $(p-1)(2p-1)$ & $4(p+1)$ & $p(p-1)/2$ & $p(p-1)$& $p+1$\\
 \hline
\end{tabular}

\caption{Classification of $\SolG{x}$ for $G=\PSL(2, p)$ when $p\equiv 5\pmod{24}$}
    \label{tbl2,p,5}
\end{table}

\begin{table}[H]

\begin{tabular}{| c || c | c | c | c | c | c |} 
 \hline
 \quad Maximal Subgroup & $|x| = 2$ & $|x| = 3$ & $|x| = 4 $ & $|x| = p$ & $|x| \divides p-1$ & $|x|\divides p+1$  \\ [0.5ex] 
 \hline\hline
 $\cp$ & 0 & 2 & 0 & 1 & 2 & 0 \\ 
 \hline
 $D_{p-1}$ & $(p+1)/2$ & 1 & 0 & 0 & 1 & 0 \\
 \hline
 $D_{p+1}$ & $(p+3)/2$ & 0 & 1 & 0 & 0 & 1 \\
 \hline
 $\rm{S}_4$ & $3(p+1)/4$ & $(p-1)/3$ & $(p+1)/4$ & 0 & 0 & 0 \\
 \hline \hline
 $|\SolG{x}|$ & $(p+1)(p+4)$ & $(p-1)(p+6)$ & $5(p+1)$ & $p(p-1)/2$ & $p(p-1)$& $p+1$\\
 \hline
\end{tabular}
\caption{Classification of $\SolG{x}$ for $G=\PSL(2, p)$ when $p\equiv 7\pmod{24}$}
    \label{tbl2,p,7}
\end{table}

\begin{table}[H]

\begin{tabular}{| c || c | c | c | c | c |} 
 \hline
 \quad Maximal Subgroup & $|x| = 2$ & $|x| = 3$ & $|x| = p$ & $|x| \divides p-1$ & $|x|\divides p+1$  \\ [0.5ex] 
 \hline\hline
 $\cp$ & 0 & 0 & 1 & 2 & 0 \\ 
 \hline
 $D_{p-1}$ & $(p+1)/2$ & 0 & 0 & 1 & 0 \\
 \hline
 $D_{p+1}$ & $(p+3)/2$ & 1 & 0 & 0 & 1 \\
 \hline
 $\rm{A}_4$ & $(p+1)/4$ & $(p+1)/3$ & 0 & 0 & 0 \\
 \hline \hline
 $|\SolG{x}|$ & $p(p+1)$ & $4(p+1)$ & $p(p-1)/2$ & $p(p-1)$& $p+1$\\
 \hline
\end{tabular}
\caption{Classification of $\SolG{x}$ for $G=\PSL(2, p)$ when $p\equiv 11\pmod{24}$}
    \label{tbl2,p,11}
\end{table}

\begin{table}[H]

\begin{tabular}{| c || c | c | c | c | c |} 
 \hline
 \quad Maximal Subgroup & $|x| = 2$ & $|x| = 3$ & $|x| = p$ & $|x| \divides p-1$ & $|x|\divides p+1$  \\ [0.5ex] 
 \hline\hline
 $\cp$ & 2 & 2 & 1 & 2 & 0 \\ 
 \hline
 $D_{p-1}$ & $(p+1)/2$ & 1 & 0 & 1 & 0 \\
 \hline
 $D_{p+1}$ & $(p-1)/2$ & 0 & 0 & 0 & 1 \\
 \hline
 $\rm{A}_4$ & $(p-1)/4$ & $(p-1)/3$ & 0 & 0 & 0 \\
 \hline \hline
 $|\SolG{x}|$ & $(p-1)(2p-1)$ & $(p-1)(p+3)$ & $p(p-1)/2$ & $p(p-1)$& $p+1$\\
 \hline
\end{tabular}

\caption{Classification of $\SolG{x}$ for $G=\PSL(2, p)$ when $p\equiv 13\pmod{24}$}
    \label{tbl2,p,13}
\end{table}

\begin{table}[H]

\begin{tabular}{| c || c | c | c | c | c | c |} 
 \hline
 \quad Maximal Subgroup & $|x| = 2$ & $|x| = 3$ & $|x| = 4 $ & $|x| = p$ & $|x| \divides p-1$ & $|x|\divides p+1$  \\ [0.5ex] 
 \hline\hline
 $\cp$ & 2 & 0 & 2 & 1 & 2 & 0 \\ 
 \hline
 $D_{p-1}$ & $(p+1)/2$ & 0 & 1 & 0 & 1 & 0 \\
 \hline
 $D_{p+1}$ & $(p-1)/2$ & 1 & 0 & 0 & 0 & 1 \\
 \hline
 $\rm{S}_4$ & $3(p-1)/4$ & $(p+1)/3$ & $(p-1)/4$ & 0 & 0 & 0 \\
 \hline \hline
 $|\SolG{x}|$ & $(p-1)(2p+3)$ & $7(p+1)$ & $(p-1)(p+4)$ & $p(p-1)/2$ & $p(p-1)$& $p+1$\\
 \hline
\end{tabular}
\caption{Classification of $\SolG{x}$ for $G=\PSL(2, p)$ when $p\equiv 17\pmod{24}$}
    \label{tbl2,p,17}
\end{table}

\begin{table}[H]

\begin{tabular}{| c || c | c | c | c | c |} 
 \hline
 \quad Maximal Subgroup & $|x| = 2$ & $|x| = 3$ & $|x| = p$ & $|x| \divides p-1$ & $|x|\divides p+1$  \\ [0.5ex] 
 \hline\hline
 $\cp$ & 0 & 2 & 1 & 2 & 0 \\ 
 \hline
 $D_{p-1}$ & $(p+1)/2$ & 1 & 0 & 1 & 0 \\
 \hline
 $D_{p+1}$ & $(p+3)/2$ & 0 & 0 & 0 & 1 \\
 \hline
 $\rm{A}_4$ & $(p+1)/4$ & $(p-1)/3$ & 0 & 0 & 0 \\
 \hline \hline
 $|\SolG{x}|$ & $p(p+1)$ & $(p-1)(p+3)$ & $p(p-1)/2$ & $p(p-1)$& $p+1$\\
 \hline
\end{tabular}

\caption{Classification of $\SolG{x}$ for $G=\PSL(2, p)$ when $p\equiv 19\pmod{24}$}
    \label{tbl2,p,19}
\end{table}

\begin{table}[H]

\begin{tabular}{| c || c | c | c | c | c | c |} 
 \hline
 \quad Maximal Subgroup & $|x| = 2$ & $|x| = 3$ & $|x| = 4 $ & $|x| = p$ & $|x| \divides p-1$ & $|x|\divides p+1$  \\ [0.5ex] 
 \hline\hline
 $\cp$ & 0 & 0 & 0 & 1 & 2 & 0 \\ 
 \hline
 $D_{p-1}$ & $(p+1)/2$ & 0 & 0 & 0 & 1 & 0 \\
 \hline
 $D_{p+1}$ & $(p+3)/2$ & 1 & 1 & 0 & 0 & 1 \\
 \hline
 $\rm{S}_4$ & $3(p+1)/4$ & $(p+1)/3$ & $(p+1)/4$ & 0 & 0 & 0 \\
 \hline \hline
 $|\SolG{x}|$ & $(p+1)(p+4)$ & $7(p+1)$ & $5(p+1)$ & $p(p-1)/2$ & $p(p-1)$& $p+1$\\
 \hline
\end{tabular}

\caption{Classification of $\SolG{x}$ for $G=\PSL(2, p)$ when $p\equiv 23\pmod{24}$}
    \label{tbl2,p,23}
\end{table}
\end{center}

\begin{proof}
We obtain the isomorphism types of the maximal subgroups of $\PSL(2, p)$ from \cite{King}*{Corollary 2.2}. We note that when $G$ has a subgroup isomorphic to $\rm{S}_4$, there are two conjugacy classes of $\rm{S}_4$ subgroups and two conjugacy classes corresponding to order $4$ elements. In any other case, there is always exactly one conjugacy class of each type of maximal subgroup.
    
First, we note that the last three columns are always the exact same in any of the eight cases and these are very similar to the last three columns of Table \ref{tbl2,3p}. This is because we are assuming $|x|>4$ in these last three columns so exactly the same proof applies since $x\notin \rm{A}_4$ or $\rm{S}_4.$ In addition, by  \cite{King}*{Theorem 2.1} we can see that when $p\equiv \pm 3\pmod 8$, there are elements of order $4$, but they behave in the same way as the $|x|\divides p\pm 1$ elements, so we do not need a separate column for them. The $|x|=p$ column follows easily as $p$ just divides the order of the maximal subgroup $\cp$ and then we apply Lemma \ref{formula2} and the fact that the number of conjugates of maximal subgroup $M$ in $G$ is $[G:M]$, as before.


The $|x|=3$ and $|x|=4$ cases are similar to each other and depend on the value of $p\pmod 3$ and $p\pmod 4$ respectively. First, by the possibilities for $p$, we see that if $x$ is an element of order $3$ or $4$, then either $|x|$ divides $p-1$ or $|x|$ divides $p+1.$ In both of these cases, we may actually apply the exact same logic we did for the last two columns of the table to construct the first three rows of the table. Unlike the last two columns in which $|x|>4,$ we must also consider $x$ being contained in some copies of $\rm{A}_4$ or $\rm{S}_4.$ Note that if $p\equiv \pm 3\pmod 8$, then an element of order $4$ has the same solubilizer as an element of order greater than $4$ dividing $p\mp 1.$ This is because $G$ has $\rm{A}_4$ maximal subgroups (instead of $\rm{S}_4$) and $\rm{A}_4$ contains no elements of order $4$.

We will look at the specific case $p\equiv 17\pmod{24}$ in detail to get an idea of how this works. In this case, $p\equiv -1\pmod 3$ and $p\equiv 1\pmod 4$. So for $|x|=3$,  $x$ is contained in zero copies of $\cp$ and $D_{p-1}$, and one copy of $D_{p+1}$ just like any other non-involution element of order dividing $p+1.$ For $|x|=4$, $x$ is contained in two copies of $\cp,$ one copy of $D_{p-1}$, and zero copies of $D_{p+1}$ just like any other non-involution element of order dividing $p-1.$  

Then, we note that the number of copies of $\rm{S}_4$ in $G$ is $[G:N_G(M)]=[G:M]$ where $M$ is a maximal subgroup of $G$ isomorphic to $\rm{S}_4$. There are two conjugacy classes of maximal subgroups of $G$ isomorphic to $\rm{S}_4$, each of which having $\frac{p(p+1)(p-1)}{48}$ conjugate subgroups, giving a total number of $\frac{p(p+1)(p-1)}{24}.$ We also know from \cite{King}*{Theorem 2.1} that there are $p(p-1)/2$ cyclic subgroups of order $3$ and $p(p+1)/2$ cyclic subgroups of order $4$ in $G$. A subgroup isomorphic to either $C_3$ or $C_4$ has two elements of order $3$ or $4$ respectively. So there are $p(p-1)$ elements of order $3$ and $p(p+1)$ elements of order $4$ in $G$. Note that these order $4$ elements are split into two equal-sized conjugacy classes (based on which of the conjugacy classes of the $\rm{S}_4$ subgroups they are contained in). We also note that $\rm{S}_4$ has eight elements of order $3$ and six elements of order $4.$

Now, we can use Lemma \ref{formula} as before to calculate the number of copies of $\rm{S}_4$ that $x$ is contained in. If $|x|=3,$ then the number of subgroups isomorphic to $\rm{S}_4$ containing $x$ is:
\[\frac{8\cdot p(p+1)(p-1)/24}{p(p-1)}=\frac{p+1}{3},\]
and if $|x|=4,$ then the number of subgroups isomorphic to $\rm{S}_4$ containing $x$ is:
\[\frac{6\cdot p(p+1)(p-1)/24}{p(p+1)}=\frac{p-1}{4}.\]
This completes every column of Table \ref{tbl2,p,17} except the first. 

This approach works for all of the $8$ cases mod $24$ that can arise, giving slightly different results for each. If $p\equiv \pm 3\pmod 8$ and $|x|=3$ or $p\equiv \pm 1\pmod 8$ and $|x|=4$, then $|x|\divides p \pm 1$ and so  $p\equiv \mp 1 \pmod{|x|}$. Then there are 
\[\frac{(p\pm 1)}{|x|}\]
copies of $\rm{A}_4$ when $|x|=3$ or $\rm{S}_4$ when $|x|=4$. We summarize this all in the Tables \ref{tbl2,p,1} through \ref{tbl2,p,23}.

 Next, we seek to describe the intersections between these copies of $\rm{A}_4$ or $\rm{S}_4$ that contain $x$ and the other maximal subgroups containing $x$. As stated above, the intersections between the maximal subgroups in the first three rows of the table are the same as for the $|x|\divides p\pm 1, |x|>4$ cases so this is all we are left to consider. As discussed in Theorems \ref{2p} and \ref{3p}, the intersections always contain subgroups isomorphic to $C_{|x|},$ namely the subgroup $\gen{x}.$  We note that $C_3$ is maximal in $\rm{A}_4$, so we must always get subgroups isomorphic to $C_3$ when intersecting any copy of $\rm{A}_4$ containing $x$ with another maximal subgroup containing $x$ for $|x|=3.$ However, in $\rm{S}_4,$ both $C_3$ and $C_4$ are contained in dihedral groups of order $6$ and $8$ respectively, so if $p\equiv \pm 1 \pmod 8,$ we have a bit more work to do. 

Again, we shall look at the $p\equiv 17\pmod{24}$ case and the result generalizes nicely. First, consider $|x|=3$. As $3\divides p+1$, the $D_{p+1}$ subgroup contains subgroups isomorphic to $D_6.$ In particular, if we pick $y$ to be any involution in $D_{p+1}$, then we have $\gen{x,y}\cong D_6 \le D_{p+1}.$ We can see that these $D_6$ subgroups of $D_{p+1}$ partition the involutions of $D_{p+1}$ into $(p+1)/6$ sets of size $3.$ 

Now, all of these $D_6$ subgroups of $G$ are conjugate so each one is contained in the same number of maximal subgroups isomorphic to $\rm{S}_4$. Namely, there are $(p+1)/3$ subgroups isomorphic to $\rm{S}_4$ containing $x$ and these must contain $(p+1)/6$ subgroups isomorphic to $D_6$. So each $D_6$ must be contained in exactly two copies of $\rm{S}_4$. Hence, the $\rm{S}_4$s that contain $x$ are split into $(p+1)/6$ pairs depending on which of the $(p+1)/6$ subgroups isomorphic to $D_6$ containing $x$ they contain. The $D_{p+1}$ that contains $x$, contains all of these $D_6$ subgroups and so it has intersection of size $6$ with any of the $\rm{S}_4$s containing $x.$

For $|x|=4,$ we have a similar result. However, when picking an involution $y\in D_{p-1},$ we cannot pick the involution which is in a singleton conjugacy class because in this case $\gen{x,y}\cong C_4$ ($\neq D_8$). Then we have $(p-1)/2$ choices for $y$ which gives $(p-1)/8$ subgroups of $D_{p-1}$ isomorphic to $D_8.$ Thus the $\rm{S}_4$s that contain $x$ are split into $(p-1)/8$ pairs depending on which of the $(p-1)/8$ subgroups isomorphic to $D_8$ containing $x$ they contain and the $D_{p-1}$ consisting of $x$ contains all of these $D_8$ subgroups and so it has intersection of size $8$ with any of the $\rm{S}_4$s containing $x.$

This approach works for all of the $8$ cases mod $24$ that can arise, giving slightly different results for each. If $p\equiv \pm 3\pmod 8$, then the intersections of the maximal subgroups containing $x$ are always just $C_{|x|}$ for $|x|=3$ and are $C_{p\pm 1}$ for $|x|=4$. If $p\equiv \pm 1\pmod 8,$ then let $|x|=3$ or $4$ and suppose $|x|\divides p \pm 1.$ Then, the intersection between the subgroup isomorphic to $D_{p\pm 1}$ and each of the $(p\pm 1)/|x|$ subgroups isomorphic to $\rm{S}_4$ containing $x$ is $D_{2|x|}.$ Finally, the $\rm{S}_4$s are split into $(p\pm 1)/(2|x|)$ pairs where each pair has intersection isomorphic to $D_{2|x|}.$ In any other case, the intersection is just $C_{|x|}.$ We summarize this all in the Tables \ref{tbl2,p,1} through \ref{tbl2,p,23}.

Now, we can use the principle of inclusion exclusion (PIE) to count the size of $\SolG{x}$ which is the union of the copies of maximal subgroups containing $x$, for $|x|=3$ or $4.$ This is a simple calculation that follows the same steps we used when $G=\PSL(2,3^p)$ and $|x| =2$. So we omit the details and put the final results in the Tables \ref{tbl2,p,1} through \ref{tbl2,p,23}.

We continue to the $|x|=2$ case. To calculate the number of each isomorphism type of maximal subgroup containing $x$, we use the same method as for the $|x|=3$ or $4$ case. We summarize the technique in the following table where we assume $p\equiv \pm 1 \pmod 4$:

\begin{table}[H]
\begin{center}
\begin{tabular}{| c || c | c | c | c |} 
 \hline
 \quad & $\cp$ & $D_{p-1}$ & $D_{p+1}$ & $\rm{A}_4$ or $\rm{S}_4$ \\ [0.75ex] 
 \hline\hline
 Copies of maximal subgroup in $G$ & $p+1$ & $\frac{p(p+ 1)}{2}$ & $\frac{p(p- 1)}{2}$ & $\frac{p(p+ 1)(p-1)}{24}$ \\ [0.75ex] 
 \hline
 Involutions in maximal subgroup & $\frac{1\pm 1}{2}$ & $\frac{p-1}{2}+\frac{1\pm 1}{2}$ & $\frac{p+1}{2}+\frac{1\mp 1}{2}$ & $3$ or $9$ \\[0.75ex] 
 \hline
 Maximal subgroups containing $x$ & $1\pm 1$ & $\frac{p+1}{2}$ & $\frac{p+1}{2}\mp 1$  & $\frac{p\mp1}{4}$ or $\frac{3(p\mp1)}{4}$\\[0.75ex] 
 \hline
\end{tabular}
\end{center}
\caption{Classification of involutions and subgroups in $G$ when $G=\PSL(2, p)$}
    \label{tblinvols}
\end{table}

The first row gives the number of subgroups of $G$ isomorphic to each type of maximal subgroup, the second gives the number of involutions in each one of these subgroups, and the last uses Lemma \ref{formula} and the number of involutions in $G$ to give the number of maximal subgroups of each type that contain an involution $x$. We use these values to populate the first column of Tables \ref{tbl2,p,1} to \ref{tbl2,p,23}. Now, based on the definition of $\SolG{x}$, it would seem natural to once again look at the intersections between these. However, for involutions this becomes very difficult. So we will find $|\SolG{x}|$ using Lemma \ref{invol} instead.

To apply this lemma, consider the following. Define $G_i:=\{g\in G \mid |g|=i\}$ for $i=1, 2, 3, 4,$ or $p$ and let $G_{p\pm1}:=\{g\in G \mid |g| \divides p\pm 1\}.$ Then, as we have shown above, for any $i\in \{1, 2 , 3, 4, p, p\pm1\}$, $\SolG{y}$ has the same structure for any $y\in G_i$. In particular, we have partitioned $G$ into seven sets based on the order of the elements for which the number of involutions in $\SolG{y}$ is constant on each set. So we want to find how many elements are in each of these seven sets and how many involutions are in $\SolG{y}$ for a $y$ in each one. We will sum the product of these over the seven sets and then divide by the number of involutions in $G$ to get $|\SolG{x}|$ for an involution.

First, we will to count the number of elements in each of the $G_i$. We shall specifically look at the case $p\equiv 17 \pmod{24}$ but the other cases are very similar (and in fact even easier). In this case, $3\divides p+1$ and $4\divides p-1$ so we know that there are $p(p-1)/2$ and $p(p+1)/2$ cyclic subgroups isomorphic to $C_3$ and $C_4$ respectively which gives $p(p-1)$ and $p(p+1)$ elements of order $3$ and $4$ respectively in $G$. Further, there are $p(p+1)/2$ involutions and clearly the identity element. There are $p+1$ cyclic groups isomorphic to $C_p$ in $G$ which gives $(p+1)(p-1)=p^2-1$ elements of order $p$.

For the elements of order dividing $p\pm 1$ but greater than $4,$ we note that $|x|\divides (p\pm 1)/2$ since these elements are contained in $C_{(p\pm1)/2}$ subgroups. So again, we look at the $p(p\mp 1)/2$ cyclic subgroups of order $(p\pm 1)/2$. To count the number of elements with order greater than $4,$ we subtract the number of elements of order $1$, $2$, $3$, or $4$ from $|C_{(p\pm1)/2}|$. There is the one identity element. In this case, since $3\divides p+1,$ there are two order $3$ elements in $C_{(p+ 1)/2}$ and zero in $C_{(p-1)/2}$. Since $4 \divides p-1,$ there is an order $2$ element in $C_{(p-1)/2}$ and zero in $C_{(p+1)/2}$. Finally, since $8 \divides p-1,$ there are two order $4$ elements in $C_{(p-1)/2}$ and zero in $C_{(p+1)/2}$.

Hence, we get that each $C_{(p-1)/2}$ contains $(p-1)/2-4$ elements of order greater than $4$ while each $C_{(p+1)/2}$ contains $(p+1)/2-3$ elements of order greater than $4.$ So the total number of elements of order dividing $p-1$ (resp. $p+1$) but greater than $4$ in $G$ is $p(p+1)/2\cdot ((p-1)/2-4)$ (resp. $p(p-1)/2\cdot ((p+1)/2-3)$.)

Now, we want to calculate the number of involutions in $\SolG{y}$ for an element $y$ of each order, again looking specifically at the $p\equiv 17\pmod{24}$ case. If $y$ is itself an involution, we use the fact that the subgroup generated by two involutions is always either a cyclic group or a dihedral group, both of which are soluble. Hence, the number of involutions in $\SolG{y}$ is just the total number of involutions in $G$, namely $p(p+1)/2$. Similarly, if $y$ is the identity, every involution is in $\SolG{y}$. Next, we seek to count the number of involutions in $\SolG{y}$ for $|y|=3$ or $4$. 

Recreating the important columns of Table \ref{tbl2,p,17} for ease of reference, we have:
\begin{table}[H]
\begin{center}
\begin{tabular}{| c || c | c |} 
 \hline
 \quad Maximal Subgroup & $|y| = 3$ & $|y| = 4 $ \\ [0.5ex] 
 \hline\hline
 $\cp$ & 0 & 2  \\ 
 \hline
 $D_{p-1}$ &  0 & 1 \\
 \hline
 $D_{p+1}$ &  1 & 0 \\
 \hline
 $\rm{S}_4$& $(p+1)/3$ & $(p-1)/4$ \\
 \hline \hline
 $|\SolG{y}|$ & $7(p+1)$ & $(p-1)(p+4)$ \\
 \hline
\end{tabular}
 \end{center}
 \caption{Classification of $\SolG{y}$ for $|y|=3, 4$ for $G=\PSL(2, p)$ when $p\equiv 17 \pmod{24}$}
    \label{tblsmall}
\end{table}

 If $|y|=3$. There are nine involutions in each of the $(p+1)/3$ copies of $\rm{S}_4$ containing $y$ and $(p+1)/2$ involutions in $D_{p+1}$ since $4\ndivides p+1$ so we do not have a rotational involution. So there are $(p+1)/2+9\cdot (p+1)/3$ total involutions. Now, we look at the intersections. When an intersection is isomorphic to $C_3,$ it contains no involutions so we may ignore it. Then, there are $(p+1)/3$ intersections between $D_{p+1}$ and $\rm{S}_4$ which are isomorphic to $D_6$, there are $(p+1)/6$ intersections between two $\rm{S}_4$s isomorphic to $D_6$, and there are $(p+1)/6$ three-way intersections isomorphic to $D_6$. These latter two cancel out using PIE. In  $D_6,$ there are three involutions so we must subtract $3\cdot (p+1)/3=p+1$ from the initial calculation which gives $5(p+1)/2$ total involutions in $\SolG{y}$ for $|y|=3.$

 Next let $|y|=4$. Again, there are nine involutions in each copy of $\rm{S}_4,$ and $(p-1)/2+1=(p+1)/2$ involutions in $D_{p-1}$ since $4 \divides p-1$ as we have a rotational involution. Further, since $4\divides p-1$ and so $2\divides (p-1)/2$, there are $p$ involutions in $\cp.$ Adding these all up gives $p\cdot 2 + (p+1)/2\cdot 1+9\cdot (p-1)/4$ total involutions. As described above, we now have intersections either isomorphic to $C_{(p-1)/2}$ which has one involution, $C_4$ which has one involution, or $D_8$ which has five involutions. We shall first assume that every intersection is one of the first two and then subtract off the extra four involutions for each of the intersections which are $D_8.$

 In total, there are $(p-1)/4+1+2$ maximal subgroups that contain $y$, so applying PIE as before
 \[\sum_{i=2}^{(p-1)/4+3} (-1)^{i+1}\binom{(p-1)/4+3}{i}=-\binom{(p-1)/4+3}{1}+1\]
 by rearranging the expression for $(1-1)^n$. As before in the case $|y|=3$, there are $(p-1)/4$ intersections between $D_{p-1}$ and $\rm{S}_4$ isomorphic to $D_8,$ there are $(p-1)/8$ intersections between two $\rm{S}_4$s isomorphic to $D_8,$ and $(p-1)/8$ three-way intersections isomorphic to $D_8.$ Again, these last two cancel when doing PIE so we need to subtract off $4\cdot (p-1)/4$ from our sum to account for these dihedral group intersections. Overall, we thus get that the number of involutions in $\SolG{y}$ for $|y|=4$ is: $$|I(\SolG{y})|=2p+(p+1)/2+9(p-1)/4-(p-1)/4-3+1-(p-1)=(7p-5)/2.$$ 
 
Thus, the number of involutions in $\SolG{y}$ for $|y|=3, 4$ is $5(p+1)/2$ and $(7p-5)/2$ respectively. For $|y|=p,$ we have $\SolG{y}=\cp$ which contains one involution since $(p-1)/2$ is even. For $|y|\divides p-1,$ we can easily use PIE to see that there are $(5p-3)/2$ involutions in $\SolG{y}$ and finally, for $|y|\divides p+1,$ there are $(p+1)/2$ involutions in $\SolG{y}.$

We can now use Lemma \ref{invol} to compute $|\SolG{x}|$ for $x$ an involution, nothing that the number of involutions in $G$ is $p(p+1)/2$.
\begin{align*}
    |\SolG{x}| &= \frac{2}{p(p+1)} \Bigg[ 1\cdot\frac{p(p+1)}{2}+\frac{p(p+1)}{2}\cdot\frac{p(p+1)}{2} + p(p-1)\cdot \frac{5(p+1)}{2} + p(p+1) \cdot \frac{(7p-5)}{2} \\
    &+ (p^2-1)\cdot p + \frac{p(p+1)}{2}\left( \frac{p-1}{2}-4\right)\cdot \frac{5p-3}{2} + \frac{p(p-1)}{2}\left( \frac{p+1}{2}-3\right)\cdot \left(\frac{p+1}{2}\right) \Bigg]
    \\
    &= 2p^2+p-3=(p-1)(2p+3). \\
    &
\end{align*}

We can follow the exact same procedure for each of the other cases to compute $|\SolG{x}|$ which we put in the Tables \ref{tbl2,p,1} through \ref{tbl2,p,23}.
    
\end{proof}

\begin{Theorem}\label{sz}
    Let $G=\Sz(2^p)$ where $p$ is an odd prime. Then the following table classifies all subsets $\SolG{x}$ for $x\in G$ based on the order of $x$ in terms of the number of maximal subgroups of each isomorphism type contained in $\SolG{x}$ and $|\SolG{x}|.$ For convenience, we define $q_\pm :=q\pm \sqrt{2q}+1.$
\end{Theorem}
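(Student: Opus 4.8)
The strategy mirrors the proofs of Theorems~\ref{2p}, \ref{3p}, and~\ref{p}, exploiting the fact that every maximal subgroup of a minimal simple group is soluble, so by Lemma~\ref{sol in minimal simple} we need only enumerate the maximal subgroups containing a given $x$ and compute their pairwise (and higher) intersections. First I would record the maximal subgroup structure of $G = \mathrm{Sz}(q)$ with $q = 2^p$ from the standard references (Suzuki's original work, or \cite{King}): the maximal subgroups fall into four classes, namely the Borel subgroup $C_q^2 \rtimes C_{q-1}$ of order $q^2(q-1)$ (a Frobenius group, the point stabiliser, with $q+1$ conjugates), the dihedral-type normaliser $D_{2(q-1)}$ of a cyclic torus $C_{q-1}$, and the two Frobenius groups $C_{q_+}\rtimes C_4$ and $C_{q_-}\rtimes C_4$ normalising the cyclic subgroups $C_{q_+}$ and $C_{q_-}$, where $q_\pm = q \pm \sqrt{2q}+1$. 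Since $|G| = q^2(q-1)(q^2+1) = q^2(q-1)q_+q_-$ and $\gcd$ among $q-1$, $q_+$, $q_-$ is $1$ (they are pairwise coprime), every nonidentity element of odd order lies in exactly one maximal torus, which pins down which family of maximal subgroups can contain it.

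Next I would split into cases by $|x|$. The elements of even order in $\mathrm{Sz}(q)$ have order $2$ or $4$ and all lie in (conjugates of) the Sylow $2$-subgroup $C_q^2$ inside the Borel; the involutions form a single conjugacy class, as do the order-$4$ elements. For $x$ of order dividing $q-1$ (but $x \neq 1$), $x$ lies in a unique $C_{q-1}$, hence in a unique $D_{2(q-1)}$ (using that $C_{q-1}$ is normal in $D_{2(q-1)}$ and $D_{2(q-1)}$ is maximal in the simple group $G$, so $D_{2(q-1)} = N_G(C_{q-1})$, exactly the argument from Theorem~\ref{2p}), and in no subgroup of the other families; likewise for $|x|$ dividing $q_+$ or $q_-$, where $x$ lies in a unique $C_{q_\pm}\rtimes C_4$. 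For the counts I would invoke Lemma~\ref{formula} (for involutions and order-$4$ elements, counting conjugates of $x$) and Lemma~\ref{formula2} (for the odd-order torus elements, counting conjugates of the cyclic subgroup $C_s$), together with the fact that a maximal subgroup $M$ has $[G:M]$ conjugates in $G$. The subtle point will be the Borel subgroup: since $C_q^2 \rtimes C_{q-1}$ is a Frobenius group with kernel $C_q^2$ of order $q^2$, an element of order dividing $q-1$ sitting in it behaves like a complement element, and I would need the exact number of complements it lies in inside the Borel to feed into Lemma~\ref{formula}.

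The main obstacle, as the phrasing preceding the theorem already signals, is the involution case: the intersections between the (many) maximal subgroups containing a fixed involution are genuinely complicated, so rather than a direct inclusion--exclusion I would use Lemma~\ref{invol}, which gives $|\SolG{x}|\cdot|I(G)| = \sum_{y\in G}|I(\SolG{y})|$ for an involution $x$, valid because all involutions in $\mathrm{Sz}(q)$ are conjugate. This reduces the problem to: (i) partitioning $G$ into the sets $G_i$ of elements whose solubilizer has a fixed structure (order $1$, order $2$, order $4$, order dividing $q-1$, order dividing $q_+$, order dividing $q_-$), (ii) counting $|G_i|$ in each case from the torus orders and the number of cyclic subgroups of each type, and (iii) for each $i$ counting the involutions in $\SolG{y}$ for $y\in G_i$ --- using that the subgroup generated by two involutions is dihedral hence soluble (so $I(\SolG{y}) = I(G)$ when $y$ is an involution), that $\SolG{y}$ for $y$ of prime order $p$ or of order dividing $q_\pm$ is a single maximal subgroup with a readily counted number of involutions, and a short inclusion--exclusion over the Borel and $D_{2(q-1)}$ for $y$ of order dividing $q-1$. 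Finally I would assemble these into the weighted sum and divide by $|I(G)| = q^2+1$ (the number of Sylow $2$-subgroups, each contributing one central involution... more precisely the total involution count, which I would compute from the Sylow structure) to extract $|\SolG{x}|$; the remaining columns then follow by the straightforward inclusion--exclusion already used in the earlier theorems, and I would tabulate all entries.
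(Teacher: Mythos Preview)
Your overall strategy is sound and the odd-order columns ($|x|\mid q-1$, $|x|\mid q_\pm$) as well as the $|x|=4$ column would go through exactly as you describe, matching the paper. The real point of comparison is the involution column, where you and the paper diverge.

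You propose to compute $|\SolG{x}|$ for an involution via Lemma~\ref{invol}. That lemma is certainly applicable (all involutions in $\mathrm{Sz}(q)$ are conjugate), and the method would work, but note that the remark ``the intersections between the maximal subgroups containing involutions are much more complicated to compute'' which you cite actually precedes Theorem~\ref{p}, not Theorem~\ref{sz}: Lemma~\ref{invol} was introduced for $\mathrm{PSL}(2,p)$. For the Suzuki groups the paper does \emph{not} invoke Lemma~\ref{invol}; instead it carries out the inclusion--exclusion directly. The key observation making this feasible is that the intersection of any two of the $q^2/2+q^2/4+q^2/4+1$ maximal subgroups containing a fixed involution $x$ is either $C_2$ or $C_4$, and the $C_4$-intersections can be completely described: there are exactly $q/2$ copies of $C_4$ containing $x$ (since each of the $q$ order-$4$ elements squaring to $x$ pairs with its inverse), each such $C_4$ sits in the unique Borel and in exactly $q/2$ of the $C_{q_+}\rtimes C_4$'s and $q/2$ of the $C_{q_-}\rtimes C_4$'s, and two maximals of type $C_{q_\pm}\rtimes C_4$ meet in $C_4$ precisely when they share that $C_4$. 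This organises the maximals into $q/2$ ``blocks'' (the paper's Figure~2), after which the PIE correction from $C_2$ to $C_4$ is routine and yields $q^2(4q-3)$. Your Lemma~\ref{invol} route would reach the same number but would trade this intersection analysis for computing $|I(\SolG{y})|$ across all element types; the paper's approach has the advantage of also producing the explicit intersection row of Table~\ref{tblsz,2p}.

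Two factual slips to fix before either approach will run. First, the Sylow $2$-subgroup of $\mathrm{Sz}(q)$ is not the elementary abelian group you write as $C_q^2$; it is the nonabelian Suzuki $2$-group $C_2^p\bigcdot C_2^p$ of order $q^2$ and exponent $4$, with exactly $q-1$ involutions (its centre has order $q$). Consequently the Borel is $(C_2^p\bigcdot C_2^p)\rtimes C_{q-1}$, and your involution count $|I(G)|=q^2+1$ is off: there are $q^2+1$ Sylow $2$-subgroups with trivial pairwise intersection, hence $|I(G)|=(q^2+1)(q-1)$, and there are $q(q^2+1)(q-1)$ elements of order $4$. These are precisely the inputs you would need for Lemma~\ref{invol}, so the correction is essential for your plan.
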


\begin{table}[H]
\begin{center}
\begin{tabular}{| c || c | c | c | c | c |} 
 \hline
 \quad Maximal Subgroup & $|x| = 2$ & $|x| = 4$ & $|x| \divides q-1$ & $|x| \divides q_+$ & $|x|\divides q_-$  \\ [0.5ex] 
 \hline\hline
 $\cccc$ & 1 & 1 & 2 & 0 & 0 \\ 
 \hline
 $D_{2(q-1)}$ & $q^2/2$ & 0 & 1 & 0 & 0 \\
 \hline
 $C_{q_+}\rtimes C_4$ & $q^2/4$ & $q/2$ & 0 & 1 & 0 \\
 \hline
 $C_{q_-}\rtimes C_4$ & $q^2/4$ & $q/2$ & 0 & 0 & 1 \\
 \hline \hline
 Intersections & $\cong C_2$ or $C_4$ & $\cong C_4$ & $\cong C_{q-1}$ & N/A & N/A \\
 \hline
 $|\SolG{x}|$ & $q^2(4q-3)$ & $q^2(q+3)$ & $2q^2(q-1)$ & $4q_+$ & $4q_-$\\
 \hline
\end{tabular}
\end{center}
\caption{Classification of $\SolG{x}$ for $G=\Sz(2^p)$}
    \label{tblsz,2p}
\end{table}

\begin{proof}
    We obtain the classification of the isomorphism types of the maximal subgroups of $G$ and that there is only one conjugacy class of each type of maximal subgroup from \cite{Abe}*{Proposition 2} and \cite{Bray}*{Theorem 7.3.3}. We also get from \cite{Wujie}*{Theorem 2} that the set of orders of the elements of $G$ consists of $2$, $4$, and all factors of $q-1$, and $q_\pm$. The last three columns of Table \ref{tblsz,2p} follow in the exact same way as in the proof of Theorem \ref{3p}. We also calculate the number of copies of the maximal subgroups that contain an element of order $2$ or $4$  using Lemma \ref{formula} in the same way as in the proof of Theorem \ref{2p} after calculating the number of elements of order $2$ and $4$ in $G$.
    
    Any element of order $2$ or $4$ is contained in a Sylow $2$-subgroup of $G$ which is isomorphic to $C_2^p. C_2^p$. We know this group has $q-1$ involutions and $q(q-1)$ elements of order $4$. Now, we see from \cite{Suzuki}*{Theorem 1} that all of these Sylow subgroups have pairwise trivial intersections and using $[G:N_G(C_2^p. C_2^p)]=[G:C_2^p. C_2^p]$, we can find that there are $q^2+1$ such subgroups. Hence, there are $(q^2+1)(q-1)$ and $q(q^2+1)(q-1)$ elements of order $2$ and $4$ in $G$ respectively.

    The intersections in the $|x|=4$ case are all isomorphic to $C_4$ since $C_4$ is maximal in $C_{q_\pm}\rtimes C_4.$ Then, $|\SolG{x}|=q^2(q+3)$ follows quickly by applying PIE as in the proof of Theorem \ref{2p}. For $|x|=2$, the intersections can be either $C_2$ or $C_4$ and we approach this analysis as follows. First, since $q-1$ is odd, $C_2$ is maximal in $D_{2(q-1)}$ and so the intersection of $D_{2(q-1)}$ with any of the other maximal subgroups is just $C_2$. 

    To calculate the intersections of the other maximal subgroups containing $x$, we first need to figure out how many subgroups isomorphic to $C_4$ contain each involution. Since all of the involutions in $G$ are conjugate, they behave in the same way. In particular, any involution, $x\in G$, must have the same number of elements $y$ of order $4$ such that $y^2=x$. From above, there are $(q^2+1)(q-1)$ involutions and $q(q^2+1)(q-1)$ elements of order $4$ in $G$. As for any element $y$ of order $4$, $y^2$ is an involution, we can find that the number of elements $y$ of order $4$ such that $y^2=x$ is $\frac{q(q^2+1)(q-1)}{(q^2+1)(q-1)}=q$. This gives $q/2$ subgroups isomorphic to $C_4$ containing $x$ as each $C_4$ contains two distinct elements of order $4$ that square to $x$. All of these subgroups are clearly contained in the subgroup $\cccc$ consisting of $x$ because it contains the Sylow $2$-subgroup consisting of $x$.

    There is only one conjugacy class containing all of the involutions in $C_{q_+}\rtimes C_4$. Each involution is contained in a unique $C_4$ in $C_{q_+}\rtimes C_4.$ Thus, for a given $x\in G$ and a given $C_{q_+}\rtimes C_4$ containing $x$, there is exactly one $C_4\le C_{q_+}\rtimes C_4$ that contains $x.$ Further, we know that all of the $q/2$ subgroups isomorphic to $C_4 $ which contain $x$ are conjugate in $G$ so any specific $C_4$ cannot be contained in the more copies of $C_{q_+}\rtimes C_4$s. Indeed, each $C_4$ that contains $x$ must be contained in exactly $(q^2/4)/(q/2)=q/2$ of the $C_{q_+}\rtimes C_4$s that contain $x$.
    
    Any two of these groups that contain the same $C_4$ clearly have intersection isomorphic to this $C_4$ since $C_4$ is maximal so it cannot be larger. And two groups that contain different $C_4$s must only intersect at $\gen{x}\cong C_2.$ Hence, we have split the $q^2/4$ copies of $C_{q_+}\rtimes C_4$ into $q/2$ sets of each $q/2$ copies, where within each set, the intersection is $C_4$ and between sets, it is just $C_2.$

We apply similar logic to the $C_{q_-}\rtimes C_4$ subgroups that contain $x$ and get the same result. In addition, we see that these sets actually overlap in the sense that a given copy of $C_4$ containing $x$, is contained in exactly $q/2$ copies of both $C_{q_+}\rtimes C_4$ and $C_{q_-}\rtimes C_4$.

Hence, summarizing the above analysis, we have the following figure:
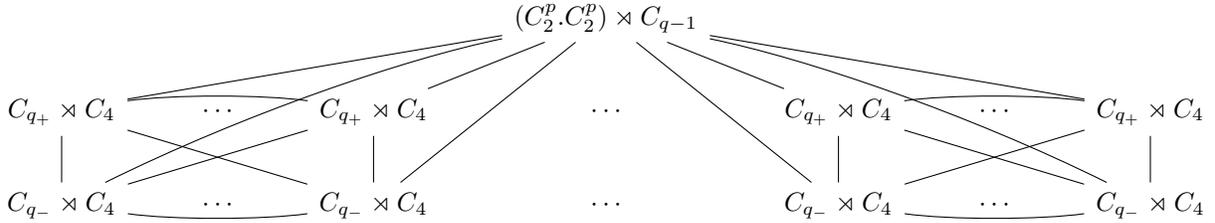
\begin{figure}[H]
\[
\begin{tikzcd}
	&&& \cccc \\
	{C_{q_+}\rtimes C_4} & \cdots & {C_{q_+}\rtimes C_4} & \cdots & {C_{q_+}\rtimes C_4} & \cdots & {C_{q_+}\rtimes C_4} \\
	{C_{q_-}\rtimes C_4} & \cdots & {C_{q_-}\rtimes C_4} & \cdots & {C_{q_-}\rtimes C_4} & \cdots & {C_{q_-}\rtimes C_4}
	\arrow[no head, from=1-4, to=2-1]
	\arrow[no head, from=1-4, to=2-3]
	\arrow[no head, from=1-4, to=2-5]
	\arrow[no head, from=1-4, to=2-7]
	\arrow[curve={height=10pt}, no head, from=1-4, to=3-1]
	\arrow[no head, from=1-4, to=3-3]
	\arrow[no head, from=1-4, to=3-5]
	\arrow[curve={height=-10pt}, no head, from=1-4, to=3-7]
	\arrow[no head, from=2-1, to=3-1]
	\arrow[no head, from=2-3, to=3-3]
	\arrow[no head, from=3-1, to=2-3]
	\arrow[no head, from=2-1, to=3-3]
	\arrow[curve={height=6pt}, no head, from=3-1, to=3-3]
	\arrow[curve={height=-6pt}, no head, from=2-1, to=2-3]
	\arrow[curve={height=6pt}, no head, from=3-5, to=3-7]
	\arrow[no head, from=2-5, to=3-5]
	\arrow[no head, from=3-5, to=2-7]
	\arrow[no head, from=2-5, to=3-7]
	\arrow[no head, from=2-7, to=3-7]
	\arrow[curve={height=-6pt}, no head, from=2-5, to=2-7]
\end{tikzcd}
\]
\caption{$C_4$ intersections of maximal subgroups containing an involution in $\Sz(2^p)$}
    \label{fig2}
\end{figure}
where lines between subgroups represent when the intersection is not just $C_2$, and there are $q/2$ complete blocks containing $q/2$ copies each of $C_{q_+}\rtimes C_4$ and $C_{q_-}\rtimes C_4.$ Finally, as in the calculation of $|\SolG{x}|$ for $|x|=2$ in Theorem \ref{3p}, we can use PIE and subtract off all of the extra intersections we get where the intersections have size $4$ instead of $2$. We omit the calculation here and merely report that the union of these maximal subgroups has size \[|\SolG{x}|=q^2(4q-3).\] 
    
\end{proof}

\begin{Theorem}\label{33}
    Let $G=\PSL(3,3)$. Then the following table classifies all subsets $\SolG{x}$ for $x\in G$ based on the order of $x$ in terms of the number of maximal subgroups of each isomorphism type contained in $\SolG{x}$ and $|\SolG{x}|.$
\end{Theorem}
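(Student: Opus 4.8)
The plan is to follow the template of the proofs of Theorems \ref{2p}--\ref{sz}. Since $|G| = 5616 = 2^4\cdot 3^3\cdot 13$, every nonidentity element of $G$ has order in $\{2,3,4,6,8,13\}$, so the table has one column for each of these orders; from \cite{Atlas} one reads off the conjugacy classes of elements of each order together with their sizes. The elements of order $3$ form two $G$-classes, the regular and the subregular unipotent classes, which behave differently and must be treated separately, whereas for each of the orders $2$, $4$, $6$, $8$, $13$ all $G$-classes of elements of that order give the same solubilizer, so a single column suffices. From \cite{Atlas} one also takes the maximal subgroups of $G$: two conjugacy classes of parabolic subgroups $\cong 3^2{:}2\rm{S}_4$ of order $432$ and index $13$ (the point stabilizers and the line stabilizers for the action of $G$ on the $13$ points of $\rm{PG}(2,3)$), one class $\cong 13{:}3$ of order $39$ and index $144$, and one class $\cong \rm{S}_4$ of order $24$ and index $234$; each maximal subgroup is self-normalizing, so its number of conjugates equals its index. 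By Lemma \ref{sol in minimal simple}, $\SolG{x}$ is the union of the maximal subgroups containing $x$; by Lemma \ref{solcong} it suffices to treat one representative of each conjugacy class; and the number of maximal subgroups of a given isomorphism type through a representative is computed via Lemma \ref{formula} (and Lemma \ref{formula2} when $\gen{x}$ is the unique cyclic subgroup of its order), using the class sizes in $G$ and the numbers of elements of each order inside each maximal subgroup, all available from \cite{Atlas}.

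The column $|x| = 13$ is immediate: $\gen{x}$ is a trivial-intersection Sylow $13$-subgroup, any maximal subgroup containing $x$ has order divisible by $13$, and the only such one is $N_G(\gen{x})\cong 13{:}3$; hence $\SolG{x} = 13{:}3$ and $|\SolG{x}| = 39$, which also follows from the observation after Lemma \ref{normalizer}. For $|x| = 8$ one uses that a Sylow $2$-subgroup of $G$ is semidihedral of order $16$ and fixes exactly one point and exactly one line of $\rm{PG}(2,3)$; since neither $13{:}3$ nor $\rm{S}_4$ has order divisible by $8$, this forces $x$ to lie in precisely one parabolic of each class, so $\SolG{x}$ is the union of two maximal subgroups of order $432$ whose intersection is the stabilizer of the corresponding point--line pair, of order $108$ if incident and $48$ if not. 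The element $x$ of order $6$ can likewise lie only in parabolics, but now in several of each class, so $|\SolG{x}|$ is obtained by determining the incidences among the points and lines fixed by $\gen{x}$ (using that the stabilizer of two distinct points, or dually of two distinct lines, has order $36$, that a flag stabilizer has order $108$, and that an antiflag stabilizer has order $48$) and then applying the principle of inclusion--exclusion over the resulting configuration exactly as in the proofs of Theorems \ref{3p} and \ref{sz}.

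The columns $|x| \in \{2,3,4\}$ are the substantial ones: here $x$ can lie in copies of $3^2{:}2\rm{S}_4$ from both classes, in copies of $\rm{S}_4$, and, for $|x| = 3$, also in a copy of $13{:}3$. After computing the counts with Lemmas \ref{formula} and \ref{formula2}, one describes the pairwise and higher intersections and applies inclusion--exclusion as before; the intersection of two parabolics is a flag stabilizer (order $108$) or an antiflag stabilizer (order $48$), while intersections with or between copies of $\rm{S}_4$ must contain $\gen{x}$ and are read off from the subgroup lattices of $3^2{:}2\rm{S}_4$ and of $\rm{S}_4$, being organized into a figure in the style of Figures \ref{fig1} and \ref{fig2}. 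For the involution column it is cleaner to bypass this bookkeeping and use Lemma \ref{invol} as in the proof of Theorem \ref{sz}: all involutions of $G$ are conjugate, so $|\SolG{x}|\cdot|I(G)| = \sum_{y\in G}|I(\SolG{y})|$, and the right-hand side is evaluated by partitioning $G$ according to the order of $y$, counting the elements of each order from the class data, and counting the involutions of $\SolG{y}$ for one $y$ of each order --- these last numbers coming from the structure of the maximal subgroups through $y$ (already determined in the other columns), together with the fact that $\gen{y,z}$ is cyclic or dihedral, hence soluble, whenever $y$ and $z$ are both involutions.

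The main obstacle is that $\rm{PSL}(3,3)$ has rank $2$, so --- unlike $\rm{PSL}(2,q)$ and $\rm{Sz}(q)$, whose non-cyclic maximal subgroups are dihedral or have very transparent subgroup lattices --- the two parabolic classes interact through the point--line incidence geometry of $\rm{PG}(2,3)$, and a fixed element of order $2$, $3$ or $4$ can sit inside parabolics of both classes and inside several copies of $\rm{S}_4$ simultaneously. The delicate step is to decide, for each pair of maximal subgroups containing $x$, whether their intersection is merely $\gen{x}$, a flag stabilizer of order $108$, an antiflag stabilizer of order $48$, or an $\rm{S}_4$- or $\rm{A}_4$-type subgroup --- equivalently, to determine all incidence relations among the points and lines fixed by the relevant maximal subgroups. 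Once this combinatorial data is pinned down (it is readily verified, if desired, from the degree-$13$ permutation representation of $G$), the inclusion--exclusion calculations and the application of Lemma \ref{invol} complete the classification exactly as in Theorems \ref{2p}--\ref{sz}.
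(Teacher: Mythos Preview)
Your plan is sound, but it takes a genuinely different route from the paper. The paper's proof of Theorem \ref{33} is a pure GAP computation: the authors directly compute, for a representative of each conjugacy class, how many maximal subgroups of each isomorphism type contain it and what the pairwise intersections are (the latter relegated to the appendix because they are ``quite complicated''), and then read off $|\SolG{x}|$. You instead propose to recover everything by hand from the Atlas data, Lemmas \ref{formula} and \ref{formula2}, the point--line incidence geometry of $\rm{PG}(2,3)$, and Lemma \ref{invol} for the involution column, in exact parallel with Theorems \ref{2p}--\ref{sz}. This is more conceptual and would make the treatment of $\rm{PSL}(3,3)$ uniform with the rank-one cases; your analysis of the easy columns $|x|=13$ and $|x|=8$ is correct, and in particular the antiflag observation for the Sylow $2$-subgroup is exactly what forces $|\SolG{x}|=816$ rather than $756$. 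The trade-off is precisely the ``delicate step'' you flag: for $|x|\in\{2,3,4,6\}$ the intersection pattern among the parabolics of both classes and the copies of $\rm{S}_4$ through $x$ is substantially messier than in the rank-one groups (the appendix records up to ten distinct intersection types in a single column), and carrying out the full incidence analysis and inclusion--exclusion by hand, while feasible, is exactly the bookkeeping the authors elected to outsource to GAP.
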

\begin{table}[H]
\begin{center}
\begin{tabular}{| c || c | c | c | c | c | c | c |} 
 \hline
 \quad Maximal Subgroup & $|x| = 2$ & $|x|=3$  & $|x|=3$   & $|x|=4$ & $|x|=6$ & $|x|=8$ & $|x|=13$ \\ [0.5ex] 
 \hline\hline
 $(C_3^2\rtimes Q_8)\rtimes C_3$ & 10 & 2 & 8 & 2 & 4 & 2 & 0\\ 
 \hline
 $C_{13}\rtimes C_3$ & 0 & 6 & 0 & 0 & 0 & 0 & 1 \\
 \hline
 $\rm{S}_4$ & 18 & 3 & 2 & 2 & 0 & 0 & 0 \\
 \hline \hline
 $|\SolG{x}|$ & 2832 & 1026 & 2376 & 848 & 1368 & 816 & 39 \\
 \hline
$|N_G(\gen{x})|$ & 48 & 18 & 108 & 16 & 12 & 16 & 39\\

 \hline
\end{tabular}
\end{center}
\caption{Classification of $\SolG{x}$ for $G=\PSL(3,3)$}
    \label{tbl3,3}
\end{table}

    \begin{proof}
        We used GAP \cite{GAP} to directly compute how many copies of each maximal subgroup contain an element in each conjugacy class. The intersections are quite complicated so we leave their classification to the appendix but this was also just computed in GAP. Note that this is the only example of a minimal simple group for which $\SolG{x}$ depends on more than just the order of $x$ so we clarify that by distinguishing between these columns with the order of $N_G(\gen{x})$.
    \end{proof}
\end{section}

\begin{section}{Graphical Properties}\label{graphprop}
In this section, we will focus on the solubility graphs associated to the finite groups. In \cites{Akbari 2, Bhowal, Burness}, many properties of this graph have been studied. We prove some properties regarding Eulerian cycles, Hamiltonian cycles and the colorability of this graph.

We first recall that the {\em solubility graph} of $G$, denoted $\sgraph$, is defined as a graph whose vertex set is $G$, and there is an edge between $x$ and $y$ when $\gen{x,y}$ is soluble. As discussed in the introduction [Section \ref{Introduction}], the soluble radical of $G$, $R(G)$, coincides with the set of universal vertices of the graph $\Gamma_S(G)$, namely the vertices that are adjacent to every other vertex of the graph. As a result of Thompson's Theorem, given a soluble group $G$, the solubility graph $\Gamma_S(G)$ is complete. So, to explore the graphical properties we consider only insoluble groups $G$ and focus on the induced solubility graph of $G$ which is the induced subgraph of $\sgraph$ on the set $G \ \backslash \ R(G)$. We denote this graph by $\Delta_S(G)$.

Now, we recall some relevant definitions in graph theory.
A graph $\Gamma$ is called {\em Eulerian} if it contains an Eulerian cycle which is a trail that starts and ends at the same vertex and passes through each edge of $\Gamma$ exactly once. A graph $\Gamma$ is {\em Hamiltonian} if it contains a Hamiltonian cycle which is a cycle visiting each vertex of $\Gamma$ exactly once. The {\em chromatic number} of a graph $\Gamma$, denoted $\chi(\Gamma)$, is the minimal number of colors required to color the vertices such that no two adjacent vertices are the same color.
\begin{Lemma}\label{NonEulerian}
    Given an insoluble finite group $G$, $\Gamma_S(G)$ is not Eulerian. 
\end{Lemma}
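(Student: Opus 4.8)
The plan is to use the classical characterization of Eulerian graphs: if a finite graph admits an Eulerian cycle, then every one of its vertices has even degree. Hence it suffices to exhibit a single vertex of odd degree in $\sgraph$. To set this up I would first record the degree formula: for $x \in G$, the neighbours of $x$ in $\sgraph$ are exactly the elements $y \neq x$ with $\gen{x,y}$ soluble, i.e. the elements of $\SolG{x} \setminus \{x\}$ (the loop at $x$ is not counted in a simple graph), so $\deg_{\sgraph}(x) = |\SolG{x}| - 1$. Thus $\sgraph$ being Eulerian would force $|\SolG{x}|$ to be odd for every $x \in G$.

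Next I would specialize to the identity element $e$ — or indeed to any element of the soluble radical $R(G)$, which by the discussion in Section~\ref{Introduction} is precisely the set of universal vertices of $\sgraph$. Since $\gen{e,y} = \gen{y}$ is cyclic, hence soluble, for every $y \in G$, we have $\SolG{e} = G$, so $e$ is adjacent to every other vertex and $\deg_{\sgraph}(e) = |G| - 1$. The final ingredient is the Feit–Thompson theorem: a finite group of odd order is soluble. Since $G$ is insoluble, $|G|$ is even, and therefore $\deg_{\sgraph}(e) = |G| - 1$ is odd. Consequently $\sgraph$ has a vertex of odd degree and cannot be Eulerian.

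I do not expect any real obstacle here: the only points needing care are the bookkeeping in the degree formula (excluding the loop at $x$) and the appeal to Feit–Thompson, which is unavoidable but entirely standard in this setting and already implicit in Thompson's classification used throughout the paper. One could instead try to avoid naming $e$ by using Cauchy's theorem to produce an involution $t \in G$ and arguing that $\gen{t, y}$ is dihedral or cyclic for many $y$, but pinning down the parity of $|\SolG{t}|$ is genuinely more delicate, so the universal-vertex argument via the identity is the cleanest route.
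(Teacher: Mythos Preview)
Your argument is correct and essentially identical to the paper's: both compute $\deg_{\sgraph}(x)=|\SolG{x}|-1$, specialize to an element of $R(G)$ (you take the identity, the paper takes any $x\in R(G)$) so that $|\SolG{x}|=|G|$, and then invoke Feit--Thompson to conclude that $|G|$ is even and hence this degree is odd. The only cosmetic difference is that the paper phrases it as a proof by contradiction and mentions connectedness of $\sgraph$, whereas you directly exhibit an odd-degree vertex, which is marginally cleaner.
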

\begin{proof}
As a well-known result in graph theory, a connected graph is Eulerian if and only it has no graph vertices of odd degree. As the solubility graph $\Gamma_S(G)$ is connected, we can use this equivalent condition. Assume that $\Gamma_S(G)$ is Eulerian. By definition of the solubility graph, $d_G(x)=|\SolG{x}|-1$. Let $x$ be an element of $R(G)$. Then we have $|\SolG{x}|=|G|=2n+1$ where $n$ is a positive integer. Thus, $G$ has odd order which implies that $G$ is soluble by the Feit-Thompson theorem, a contradiction.
\end{proof}
\begin{Lemma}\label{gamham}
    For a finite group $G$, if $\Delta_S(G)$ has a Hamiltonian cycle, then $\Gamma_S(G)$ does too.
\end{Lemma}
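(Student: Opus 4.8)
The plan is to produce a Hamiltonian cycle of $\Gamma_S(G)$ by splicing the vertices of $R(G)$ into a given Hamiltonian cycle of $\Delta_S(G)$. First I would observe that the hypothesis forces $G$ to be insoluble: if $G$ were soluble then $R(G)=G$, so $\Delta_S(G)$ would have empty vertex set and could not be Hamiltonian. So assume $G$ is insoluble and write $R(G)=\{r_1,\dots,r_k\}$ with $k\ge 1$ (recall $1\in R(G)$). The two structural facts I would use are exactly those recorded before the lemma: $R(G)$ is precisely the set of universal vertices of $\Gamma_S(G)$, and $\Delta_S(G)$ is the induced subgraph of $\Gamma_S(G)$ on $G\setminus R(G)$.

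Next, let $\mathcal C$ be a Hamiltonian cycle of $\Delta_S(G)$. Being a cycle, $\mathcal C$ contains at least one edge, say $\{x,y\}$ with $x,y\in G\setminus R(G)$. Since every $r_i$ is a universal vertex of $\Gamma_S(G)$, the sequence $x,r_1,r_2,\dots,r_k,y$ is a path in $\Gamma_S(G)$: each consecutive pair is joined by an edge because one of its endpoints lies in $R(G)$. I would then delete the edge $\{x,y\}$ from $\mathcal C$ and insert this path in its place. The resulting closed walk passes through every vertex of $G\setminus R(G)$ exactly once (those traversed by $\mathcal C$) and through every vertex of $R(G)$ exactly once, so it is a Hamiltonian cycle of $\Gamma_S(G)$, as desired.

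I do not expect a genuine obstacle here; the construction is elementary once the universal‑vertex description of $R(G)$ is in hand. The only points warranting a sentence are that a Hamiltonian cycle of $\Delta_S(G)$ really does supply an edge to splice into (immediate, since cycles have edges) and that $\lvert G\setminus R(G)\rvert\ge 3$ when $\Delta_S(G)$ is Hamiltonian, so the edge $\{x,y\}$ and the rest of $\mathcal C$ behave as expected. Hence the implication follows.
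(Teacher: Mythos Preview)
Your proof is correct and follows essentially the same approach as the paper: both arguments remove one edge of the Hamiltonian cycle in $\Delta_S(G)$ and splice in a path through the universal vertices $R(G)$ before closing the cycle. Your version is somewhat more carefully stated (explicitly noting that $G$ must be insoluble and that the cycle supplies an edge to splice into), but the idea is identical.
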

\begin{proof}
    Any element in $R(G)$ is connected to all other elements in the graph $\Gamma_S(G)$. Hence, we can perform the Hamiltonian cycle in $\Delta_S(G)$ except for the final edge, then progress through all of the elements in $R(G)$, before ending where we started to give a Hamiltonian cycle in $\Gamma_S(G).$
\end{proof}
The converse of Lemma \ref{gamham} need not be true since $\Gamma_S(G)$ can have many more connections than $\Delta_S(G)$.
\begin{Lemma}\label{gamcol}
    Let $G$ be a finite group. Then $\chi(\Gamma_S(G))=\chi(\Delta_S(G))+|R(G)|.$
\end{Lemma}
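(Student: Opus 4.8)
The plan is to identify $\Gamma_S(G)$ as the \emph{join} of a complete graph with $\Delta_S(G)$ and then invoke the elementary fact that the chromatic number of a join is the sum of the chromatic numbers of its two parts.

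First I would record two structural facts about the vertices lying in $R(G)$. Since $R(G)$ is a soluble subgroup, for any $x,y\in R(G)$ the group $\gen{x,y}\le R(G)$ is soluble, so $R(G)$ induces a complete subgraph on $|R(G)|$ vertices inside $\Gamma_S(G)$. Moreover, as recalled in Section \ref{Introduction} and at the start of this section, every $x\in R(G)$ is a universal vertex of $\Gamma_S(G)$, since $\gen{x,y}$ is soluble for all $y\in G$ whenever $x\in R(G)$. Combining these with the fact that the induced subgraph of $\Gamma_S(G)$ on $G\setminus R(G)$ is by definition $\Delta_S(G)$, we conclude that $\Gamma_S(G)$ is the join of $K_{|R(G)|}$ with $\Delta_S(G)$: the disjoint union of these two graphs together with all edges between them.

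Next I would establish the colouring identity for this join. For the inequality $\chi(\Gamma_S(G))\le \chi(\Delta_S(G))+|R(G)|$, take a proper colouring of $\Delta_S(G)$ using a palette of $\chi(\Delta_S(G))$ colours, take $|R(G)|$ further colours disjoint from that palette, and assign these new colours bijectively to the elements of $R(G)$; since the only edges not internal to one of the two parts run between the two parts, and the parts use disjoint palettes, this is a proper colouring of $\Gamma_S(G)$. For the reverse inequality, fix an optimal proper colouring of $\Gamma_S(G)$. The $|R(G)|$ pairwise adjacent vertices of $R(G)$ receive $|R(G)|$ distinct colours, and none of those colours can occur on any vertex of $G\setminus R(G)$, because every such vertex is adjacent to every vertex of $R(G)$. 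Restricting the colouring to $\Delta_S(G)$ is proper, hence uses at least $\chi(\Delta_S(G))$ colours, and these are disjoint from the $|R(G)|$ colours used on $R(G)$, so $\chi(\Gamma_S(G))\ge \chi(\Delta_S(G))+|R(G)|$. Together these give equality.

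There is essentially no hard step: the whole proof reduces to the single observation that $R(G)$ is simultaneously a clique and a set of universal vertices, after which the statement is just the identity $\chi(G_1 \text{ join } G_2)=\chi(G_1)+\chi(G_2)$ applied with $G_1=K_{|R(G)|}$ and $G_2=\Delta_S(G)$. The only mild point of care is the bookkeeping of disjoint palettes in the two inequalities, together with the degenerate case $R(G)=1$, in which the claim reads $\chi(\Gamma_S(G))=\chi(\Delta_S(G))$ and indeed $\Gamma_S(G)=\Delta_S(G)$.
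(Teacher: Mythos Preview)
Your argument is correct and is essentially the paper's proof, just made more explicit: the paper also uses that every element of $R(G)$ is a universal vertex, extends a proper colouring of $\Delta_S(G)$ by $|R(G)|$ fresh colours on $R(G)$, and asserts that no fewer colours suffice. Framing this as the join identity $\chi(K_{|R(G)|}\ast\Delta_S(G))=|R(G)|+\chi(\Delta_S(G))$ is a nice way to package both inequalities.

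One small slip in your final parenthetical: when $R(G)$ is trivial we have $|R(G)|=1$, not $0$, so the claim reads $\chi(\Gamma_S(G))=\chi(\Delta_S(G))+1$, and $\Gamma_S(G)$ is $\Delta_S(G)$ with the identity adjoined as a universal vertex, not literally equal to $\Delta_S(G)$. This does not affect the proof, which already covers that case correctly.
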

\begin{proof}
This is easy to show using the fact that every element of $R(G)$ is joined to every other element in the group. In fact, given a proper $n$-coloring of $\Delta_S(G)$, we can extend this to a proper $(n+|R(G)|)$-coloring of $\Gamma_S(G)$ by simply coloring each of the elements of $R(G)$ a distinct color that we have not used so far for $\Delta_S(G)$. It is also clear that we cannot obtain a proper coloring with fewer colors.
\end{proof}

As we can see in Lemmas \ref{NonEulerian}, \ref{gamham}, and \ref{gamcol}, we can learn a lot about $\Gamma_S(G)$ from the induced solubility graph $\Delta_S(G)$. Thus, we presently turn our attention to this graph for the rest of our analysis in this paper. 
\begin{Theorem}\label{eul}
     Given an insoluble finite group $G$, $\dgraph$ is Eulerian if and only if $|R(G)|$ is odd and for any element $x$ in $G$, $|\text{Sol}_G(x)|$ is even.
\end{Theorem}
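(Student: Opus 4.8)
The plan is to reduce everything to the classical characterization of Eulerian graphs together with a single degree computation. Recall the standard fact that a graph possesses an Eulerian cycle precisely when it is connected (after deleting isolated vertices) and every vertex has even degree. Since it is known that for an insoluble finite group $G$ the induced solubility graph $\Delta_S(G)$ is connected \cite{Burness}, the content of the theorem is exactly that every vertex of $\Delta_S(G)$ has even degree if and only if $|R(G)|$ is odd and $|\SolG{x}|$ is even for every $x\in G$.

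First I would record the degree formula. For $x\in G\setminus R(G)$, the neighbours of $x$ in $\Gamma_S(G)$ are precisely the elements of $\SolG{x}\setminus\{x\}$, so $x$ has degree $|\SolG{x}|-1$ there. By Lemma \ref{normalizer}$(a)$ we have $R(G)\subseteq\SolG{x}$, while $x\notin R(G)$; hence restricting to the induced subgraph on $G\setminus R(G)$ deletes exactly the $|R(G)|$ neighbours of $x$ lying in $R(G)$. Therefore $\deg_{\Delta_S(G)}(x)=|\SolG{x}|-1-|R(G)|$ for every $x\in G\setminus R(G)$, and the rest is parity bookkeeping on this expression.

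For sufficiency, suppose $|R(G)|$ is odd and $|\SolG{x}|$ is even for all $x\in G$. Then for each $x\in G\setminus R(G)$ we get $\deg_{\Delta_S(G)}(x)=|\SolG{x}|-1-|R(G)|\equiv 0-1-1\equiv 0\pmod 2$, so every vertex of $\Delta_S(G)$ has even degree, and by connectedness $\Delta_S(G)$ is Eulerian. For necessity, suppose $\Delta_S(G)$ is Eulerian, so all vertex degrees are even; such vertices exist because $G$ is insoluble, whence $R(G)\neq G$. If $|R(G)|$ were even, then Lemma \ref{normalizer}$(b)$ would force $|R(G)|\divides|\SolG{x}|$, hence $|\SolG{x}|$ even, so $\deg_{\Delta_S(G)}(x)=|\SolG{x}|-1-|R(G)|$ would be odd, a contradiction. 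Thus $|R(G)|$ is odd, and then for $x\in G\setminus R(G)$ we have $|\SolG{x}|=\deg_{\Delta_S(G)}(x)+1+|R(G)|$, which is even; while for $x\in R(G)$ we have $\SolG{x}=G$ and $|G|$ is even by the Feit--Thompson Theorem since $G$ is insoluble. Hence $|\SolG{x}|$ is even for every $x\in G$, completing the argument.

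The only genuinely delicate point is the appeal to connectedness of $\Delta_S(G)$: a disconnected graph all of whose vertices have even degree still fails to be Eulerian, so one must either invoke the known connectedness result or give a self-contained argument, e.g.\ ruling out isolated vertices via $\langle x\rangle\subsetneq N_G(\langle x\rangle)\subseteq\SolG{x}$ and using the small diameter of the solubility graph to exclude larger components. Everything beyond that is elementary parity arithmetic resting on Lemma \ref{normalizer} and the Feit--Thompson Theorem, so I anticipate no further obstruction.
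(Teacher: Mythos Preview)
Your proof is correct and follows essentially the same approach as the paper: both invoke connectedness of $\Delta_S(G)$, compute the degree as $|\SolG{x}|-|R(G)|-1$, use the divisibility $|R(G)|\mid|\SolG{x}|$ from Lemma~\ref{normalizer}(b) to rule out even $|R(G)|$, and appeal to Feit--Thompson for elements of $R(G)$. The only cosmetic differences are that the paper cites \cite{Akbari 2} rather than \cite{Burness} for connectedness, and your write-up is slightly more careful in deriving the degree formula and in the parity bookkeeping.
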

\begin{proof}
We first note that according to \cite{Akbari 2}*{Corollary 2.2}, the induced solubility graph $\dgraph$ is always connected. 
Then, the reverse direction of the statement is easy to show using the definition of $\dgraph$ since $d_\Delta(x)=|\SolG{x}|-|R(G)|-1$ is even for any vertex in $G\setminus R(G)$ where $d_\Delta(x)$ is the degree of $x$. To prove the forward direction, assume $d_\Delta(x)$ is even for all $x\in G\setminus R(G)$. Then, if $|R(G)|$ is even, so too is $|\SolG{x}|$ by Lemma \ref{normalizer}, a contradiction to $d_\Delta(x)$ being even. So $|R(G)|$ is odd and then using $|\SolG{x}|=d_\Delta(x)+|R(G)|+1$, we clearly see that $|\SolG{x}|$ must be even for any $x\in G\setminus R(G).$ Finally, for an element $x\in R(G),$ we have $|\SolG{x}|=|G|$ is even as $G$ is insoluble.
\end{proof}

\begin{Theorem}
    If $G=\PSL(2, 2^p)$ for $p$ a prime, then $\Delta_S(G)$ has a Hamiltonian cycle.
\end{Theorem}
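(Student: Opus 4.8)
The plan is to construct an explicit Hamiltonian cycle of $\dgraph$. Since $G$ is simple, $R(G)=1$, so $\dgraph$ is the graph on $G\setminus\{1\}$ with $x\sim y$ iff $\gen{x,y}$ is soluble; recall also that every maximal subgroup of $G$ induces a clique in $\dgraph$ and that $\SolG x$ is the union of the maximal subgroups containing $x$ (Lemma~\ref{sol in minimal simple}). Write $q=2^p\ge 4$. First I would read off the coarse structure of $\dgraph$ from Theorem~\ref{2p} and Table~\ref{tbl2,2p}: $G\setminus\{1\}$ splits into the $q^2-1$ involutions $I$, the set $R_+$ of non-identity elements of order dividing $q+1$, and the set $R_-$ of non-identity elements of order dividing $q-1$. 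Any two involutions generate a dihedral group, so $I$ is a clique. Each $x\in R_+$ lies in a unique $C_{q+1}$ and hence a unique maximal $D_{2(q+1)}$, with $\SolG x$ equal to that $D_{2(q+1)}$; thus $R_+$ is a disjoint union of $m:=q(q-1)/2$ cliques $K_1,\dots,K_m$ of size $q$, no two joined by an edge, and $K_i$ is completely joined to the $q+1$ reflections (all involutions) of the $D_{2(q+1)}=:B_i$ containing it. For $x\in R_-$, $\SolG x$ is the union of two Borel subgroups and one $D_{2(q-1)}$; using $\gcd(q-1,q+1)=1$ one checks there are no edges between $R_-$ and $R_+$, and $x$ is adjacent to at least one involution (e.g.\ a reflection of its $D_{2(q-1)}$).

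Next I would find a single Hamiltonian path through $R_-$. A non-identity element of order dividing $q-1$ lies in exactly two of the $q+1$ Borel subgroups of $G$; within one Borel all such elements are mutually adjacent, and two distinct Borels meet in a torus $C_{q-1}$ containing $q-2$ of them. Indexing the Borels by $\{1,\dots,q+1\}$, this identifies the subgraph induced on $R_-$ with the lexicographic product $L(K_{q+1})[K_{q-2}]$: the line graph of $K_{q+1}$ with each vertex replaced by a clique on $q-2$ vertices, incident blobs completely joined. As $q+1$ is odd, $K_{q+1}$ is Eulerian, so $L(K_{q+1})$ is Hamiltonian (an Euler circuit of $K_{q+1}$ is a Hamiltonian cycle of its line graph); a clique blow-up of a Hamiltonian graph is Hamiltonian, so $R_-$ has a Hamiltonian path $\mathcal P$ with endpoints $u,v$.

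Finally I would splice the pieces through the clique $I$. Since any two involutions are adjacent, involutions can serve as connectors; I will route the cycle as $\mathcal P$, then the cliques $K_1,\dots,K_m$ in turn, placing one involution between consecutive pieces, and close up through the leftover involutions. This requires $2m+2$ \emph{distinct} involutions---$c_0\sim u$, $d_0\sim v$, and a pair $c_i,d_i\in I(B_i)$ for each $i$---which is feasible: $2m+2=q^2-q+2\le q^2-1=|I|$, and a system of distinct representatives exists by Hall's theorem, because each $B_i$ has $q+1$ involutions, each involution lies in only $q/2$ of the $D_{2(q+1)}$'s (Table~\ref{tbl2,2p}), and two distinct $D_{2(q+1)}$'s share at most one involution (else their intersection would contain $C_2^2$ or a rotation of order dividing $q+1$, neither of which lies in a $D_{2(q+1)}$ when $q$ is even); a Bonferroni/double-counting estimate then gives $\bigl|\bigcup_{i\in J}I(B_i)\bigr|\ge 2|J|+2$ for all $J\subseteq\{1,\dots,m\}$, which is Hall's condition (and absorbs the two extra slots $c_0,d_0$). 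Taking $w_1,\dots,w_{q-3}$ to be the remaining involutions, the closed walk
\[ w_1-\cdots-w_{q-3}-c_0-u-\mathcal P-v-d_0-c_1-K_1-d_1-c_2-K_2-\cdots-d_{m-1}-c_m-K_m-d_m-w_1, \]
with each $K_i$ traversed by a Hamiltonian path of that clique, visits every vertex of $G\setminus\{1\}$ once and has every consecutive pair adjacent, so it is a Hamiltonian cycle. I expect the main obstacle to be the first paragraph together with the Hall estimate: the whole argument rests on identifying precisely which pairs are adjacent---so that $\dgraph$ really is a clique of involutions together with $m$ disjoint $q$-cliques and a clique-blow-up of $L(K_{q+1})$---and, since only $q-3$ involutions are to spare, on the fact that distinct $D_{2(q+1)}$'s meet in at most one involution; granting these, the Hamiltonian path in $R_-$ and the final assembly are routine.
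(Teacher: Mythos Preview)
Your proof is correct, but your route differs from the paper's in two respects.

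For $R_-$ you recognise the induced subgraph as the lexicographic product $L(K_{q+1})[K_{q-2}]$ and use the standard fact that an Euler circuit of $K_{q+1}$ (which exists since $q+1$ is odd) gives a Hamiltonian cycle of its line graph, hence of the blow-up. The paper instead orders the $q+1$ Borel subgroups $A_1,\dots,A_{q+1}$ arbitrarily and walks through them in sequence, sweeping up the $C_{q-1}$'s in each $A_i$ and using $A_i\cap A_{i+1}\cong C_{q-1}$ as the bridge to $A_{i+1}$. Your description is cleaner and explains \emph{why} the construction works; the paper's is more hands-on but uses no graph-theoretic vocabulary.

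For $R_+$ the difference is sharper. The paper proves the stronger statement that any two distinct $D_{2(q+1)}$'s intersect in \emph{exactly} one involution (a short counting argument), and then simply chains $B_1,\dots,B_m$ in an arbitrary order using the unique shared involution at each step. You instead invoke Hall's marriage theorem: each $B_i$ contributes $q+1$ involutions, each involution lies in $q/2$ of the $B_i$'s, pairwise intersections have at most one involution, and combining Bonferroni (for small $|J|$) with the double-counting bound $|{\textstyle\bigcup_{i\in J}}I(B_i)|\ge 2(q+1)|J|/q$ (for large $|J|$) yields $|{\textstyle\bigcup_{i\in J}}I(B_i)|\ge 2|J|+2$, so an SDR with two slots to spare exists. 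This is correct; the cleanest way to ``absorb'' $c_0,d_0$ is to choose them first (each of $u,v$ is adjacent to at least $q-1\ge 3$ involutions via its Borel) and then apply Hall to the remaining $q^2-3$ involutions, where the surplus $+2$ exactly compensates. The paper's argument is shorter because the exact-intersection fact makes Hall unnecessary; your argument is more robust in that it only needs the weaker ``at most one'' intersection bound.

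One small wording issue: in your justification that two $D_{2(q+1)}$'s share at most one involution, the $C_2^2$ alternative never arises because $q+1$ is odd, so two distinct reflections in $D_{2(q+1)}$ always generate a dihedral subgroup with a nontrivial rotation (which then lies in a unique $D_{2(q+1)}$). The conclusion is unaffected.
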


\begin{proof}

We will start and end our cycle at an involution. The subgroup generated by any two involutions is a dihedral group which is soluble. So the involutions of $G$ form a complete subgraph of $\Delta_S(G).$ Also by the proof of the characterization of the solubilizers in Theorem \ref{2p}, we get that it suffices that we cover all involutions, copies of $C_{q-1}$, and copies of $C_{q+1}$ since every element is in one of these subgroups. Further, each of these subgroups is a complete subgraph $\Delta_S(G)$ since cyclic groups are always soluble. We reproduce Table \ref{tbl2,2p} here for ease of reference.

\begin{center}
\begin{tabular}{| c || c | c | c|} 
 \hline
 \quad Maximal Subgroup & $|x| = 2$ & $|x| \divides q+1$ & $|x| \divides q-1$  \\ [0.5ex] 
 \hline\hline
 $\cc$ & 1 & 0 & 2  \\ 
 \hline
 $D_{2(q+1)}$ & $q/2$& 1 & 0  \\
 \hline
 $D_{2(q-1)}$ & $q/2$ & 0 & 1   \\
 \hline \hline
 Intersections & $\cong C_2$ & N/A & $\cong C_{q-1}$ \\
 \hline
 $|\SolG{x}|$ & $3q(q-1)$ & $2(q+1)$ & $2q(q-1)$ \\
 \hline
\end{tabular}
\end{center}

We first claim that the subgraph induced by all of the $C_{q-1}$s contains a Hamiltonian path so for this part we simply need to choose a starting and ending involution. We know that there are a total of $q+1$ subgroups of $G$ isomorphic to $\cc$ by \cite{King}*{Theorem 2.1}. From Theorem \ref{2p}, each $C_{q-1}$ is the intersection of two copies of $\cc$ and there are a total of $\frac{q(q+1)}{2}=\binom{q+1}{2}$ pairs of $\cc$. There are also exactly $\frac{q(q+1)}{2}$ copies of $C_{q-1}$ (\cite{King}*{Theorem 2.1}). So each $C_{q-1}$ is uniquely the intersection of two copies of $\cc$. Now, we label all of the copies of $\cc$ as $\rm{A}_1,\rm{A}_2,...,\rm{A}_{q+1}$ in any random order since there are $q+1$ total. Let $B_i=\rm{A}_i\cap \rm{A}_{i+1}$ and we know that every $B_i$ is a copy of $C_{q-1}$ since every intersection is a $C_{q-1}$.

So then we can find an involution in $\rm{A}_1$ and have it connect to all of the copies of $C_{q-1}$ in $\rm{A}_1$ in any order, finishing with an element in $B_1$. Then when we are at $B_1$ we can connect to all of the copies of $C_{q-1}$ in $\rm{A}_2$, finishing with an element in $B_2$. We can continue this process until we finish with all of the copies of $C_{q-1}$ in $\rm{A}_{q+1}$. Note that we skip the copies of $C_{q-1}$ that we have already covered. Further, at each step, we know that we have not visited the $B_i$ before we get to it. This is because $B_i\cong C_{q-1}$ is defined as $\rm{A}_i\cap \rm{A}_{i+1}$ and so it cannot be in any $\rm{A}_j$ with $j<i$ since it is in exactly two $\rm{A}_i$s from Table \ref{tbl2,2p}. Also note that this path would cover every copy of $C_{q-1}$ since each of these is in exactly two $\rm{A}_i$s and we have reached every $\rm{A}_i$. Once we reach $\rm{A}_{q+1}$ we can return to any involution in $\rm{A}_{q+1}$. So then we can use this involution for the remaining path.

Next, we deal with the $C_{q+1}$ subgroups. First, we show that the intersection of any two $D_{2(q+1)}$s is a unique subgroup isomorphic to $C_2$. Let $D$ and $D'$ be two specified subgroups of $G$ isomorphic to $D_{2(q+1)}$. From the classification of the intersections of the maximal subgroups containing involutions in Table \ref{tbl2,2p}, we know that $D\cap D'\le C_2$. We show that the intersection of $D$ with any $D'$ is nontrivial. We first note that as $D_{2(q+1)}$ has $q+1$ involutions and each of these involutions is contained in exactly $q/2$ copies of $D_{2(q+1)},$ we see that there are $(q+1)(q/2-1)$ subgroups isomorphic to $D_{2(q+1)}$ other than $D$ which contain its involutions. This is because two involutions in $D$ cannot both be contained in the same other $D'$ as otherwise $|D\cap D'|> 2$. However, $(q+1)(q/2-1)=q(q-1)/2-1$ is the total number of $D_{2(q+1)}$ subgroups of $G$ other than $D$ by using \cite{King}*{Theorem 2.1}. This means that every $D_{2(q+1)}$ in $G$ contains an involution that is contained in $D$ so the intersection of $D$ with any other $D_{2(q+1)}$ is nontrivial and must be exactly $C_2.$

 Now, consider ordering the $D_{2(q+1)}$s in some arbitrary order, say labelling them as $B_1, B_2, \ldots, B_{q(q-1)/2}$. Since the intersection of any two $D_{2(q+1)}$ is isomorphic to a unique $C_2$, we have $q(q-1)/2-1$ involutions that occurs between $B_i$ and $B_{i+1}$ for $1\le i \le q(q-1)/2$. There are $q^2-1$ total involutions in $G$ and we started with an involution so we may pick any of the remaining $q^2-2-(q(q-1)/2-1)=(q^2+q-2)/2$ to transition from the $C_{q-1}$ to the $C_{q+1}$ subgroups. Then, we go through the $B_i$s as detailed above, completing each $C_{q+1}$ subgroup and then using the unique involution in $B_i\cap B_{i+1}$ to get to the next $D_{2(q+1)}.$ If it turns out that an involution we need to progress from $B_i$ to $B_{i+1}$ was used as the starting point of the whole cycle, we can simply choose a different involution to start that is in $\rm{A}_1$. At the end, we are guaranteed that there are many involutions left to choose from $B_{q(q-1)/2}$. Finally, we can clean up the remaining involutions since they form a complete subgraph of $\Delta_S(G)$, ending on the involution we originally used to start the cycle. So the proof is complete.
\end{proof}
\begin{Theorem}
    If $G = \PSL(2,3^p)$ for $p$ an odd prime or $G = \PSL(2, p)$ for $p$ a prime with $p\equiv 19\pmod{24}$ and $p\equiv 2$ or $3\pmod 5$,  then $\Delta_S(G)$ never has a Hamiltonian cycle.
\end{Theorem}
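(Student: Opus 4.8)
The plan is to use the standard toughness obstruction to Hamiltonicity: if a finite graph $\Gamma$ has a Hamiltonian cycle, then $c(\Gamma - S) \le |S|$ for every nonempty proper vertex subset $S$, where $c(\cdot)$ denotes the number of connected components (deleting $|S|$ vertices from the Hamiltonian cycle leaves at most $|S|$ vertex-disjoint paths, and each component of $\Gamma - S$ contains at least one of them). I would take $S$ to be the set of all involutions of $G$ and show that $\Delta_S(G) - S$ has strictly more than $|S|$ components. Write $n = 3^p$ in the first case and $n = p$ in the second; in both cases $n \equiv 3 \pmod 4$, so $(n+1)/2$ is even, and $G = \mathrm{PSL}(2,n)$ has a single conjugacy class of maximal subgroups isomorphic to $D_{n+1}$.

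The key step is to isolate, inside each maximal $D_{n+1}$, its ``far rotations''. List the subgroups isomorphic to $D_{n+1}$ as $D_1, \dots, D_N$; since $G$ is simple, $N_G(D_{n+1}) = D_{n+1}$ and so $N = [G : D_{n+1}] = n(n-1)/2$. Let $T_i$ be the set of elements of order $> 2$ in the cyclic subgroup of order $(n+1)/2$ of $D_i$; it is a clique of $\Delta_S(G)$ of size $(n+1)/2 - 2 > 0$. From the last column of the classification table in Theorem \ref{3p} (resp. Theorem \ref{p}, Table \ref{tbl2,p,19}), an element of order $> 2$ dividing $n+1$ lies in a unique maximal subgroup, a copy of $D_{n+1}$, and has solubilizer of size $n+1$; hence $\SolG{x} = D_i$ for $x \in T_i$, which forces the $T_i$ to be pairwise disjoint. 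Since $R(G) = 1$, the neighbourhood of $x \in T_i$ in $\Delta_S(G)$ is $\SolG{x} \setminus \{1,x\} = D_i \setminus \{1,x\}$, and every nonidentity element of $D_i$ lying outside $T_i$ is an involution (a reflection or the order-$2$ rotation). Consequently, after deleting $S = I(G)$, each $T_i$ becomes an entire connected component of $\Delta_S(G) - S$.

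This already yields $N$ components of $\Delta_S(G) - S$; a further one is contributed by, say, an element of order $3$, which exists by Cauchy's theorem and lies in no $T_i$ because $3 \nmid n+1$ (indeed $3 \mid n$ when $n = 3^p$, and $p \equiv 1 \pmod 3$ when $p \equiv 19 \pmod{24}$). Hence $c(\Delta_S(G) - S) \ge N + 1$. On the other hand, since $n \equiv 3 \pmod 4$, the centralizer of an involution of $\mathrm{PSL}(2,n)$ is dihedral of order $n+1$, so $|S| = |I(G)| = |G|/(n+1) = n(n-1)/2 = N$; thus $c(\Delta_S(G) - S) \ge N + 1 > |S|$, contradicting the toughness condition, and $\Delta_S(G)$ has no Hamiltonian cycle. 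The part needing the most care is the numerical coincidence $|I(G)| = N$, that is, that the number of involutions equals the number of conjugates of $D_{n+1}$; this rests only on the standard centralizer-of-involution computation in $\mathrm{PSL}(2,n)$, while the rest is read directly off the tables of Section \ref{class} together with the elementary structure of dihedral groups. I do not expect a genuine obstacle here, merely the need to confirm that the small-order columns of the tables ($|x| \in \{2,3,4\}$) do not perturb the $T_i$, which is immediate from $\gcd(n-1,n+1)=2$ and $3 \nmid n+1$.
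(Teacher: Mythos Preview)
Your proof is correct and is essentially the same argument as the paper's, recast in the standard language of graph toughness: both hinge on the observation that the non-involution rotations of each maximal $D_{n+1}$ become isolated blocks once the involutions are deleted, and that the number $n(n-1)/2$ of such blocks equals the number of involutions. The paper argues this directly (``one must enter and leave each $D_{q+1}$ via an involution, and there are too few involutions''), whereas you invoke the necessary condition $c(\Gamma-S)\le|S|$ and exhibit the extra component coming from an order-$3$ element; the content is identical, and your phrasing is arguably the cleaner of the two.
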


\begin{proof} 
We show that for $G=\PSL(2,3^p),\Delta_S(G)$ cannot have a Hamiltonian cycle. The proof for $\PSL(2, p), p\equiv 19 \pmod{24}$ is similar. We reproduce Table \ref{tbl2,3p} above classifying the solubilizers of elements in $G$:

\begin{center}
\begin{tabular}{| c || c | c | c | c|} 
 \hline
 \quad Maximal Subgroup & $|x| = 2$ & $|x| = 3$ & $|x| \divides q-1$ & $|x| \divides q+1$  \\ [0.5ex] 
 \hline\hline
 $\ccc$ & 0 & 1 & 2 & 0  \\ 
 \hline
 $D_{q-1}$ & $(q+1)/2$& 0 & 1 & 0  \\
 \hline
 $D_{q+1}$ & $(q+3)/2$ & 0 & 0 & 1  \\
 \hline
 $\rm{A}_4$ & $(q+1)/4$ & $q/3$ & 0 & 0 \\
 \hline \hline
 Intersections & $\cong C_2$ or $C_2^2$ & $\cong C_3$ & $\cong C_{(q-1)/2}$ & N/A \\
 \hline
 $|\SolG{x}|$ & $q(q+1)$ & $q(q+5)/2$ & $q(q-1)$ & $q+1$ \\
 \hline
\end{tabular}
\end{center}

Consider an element $x$ of order dividing $q+1$. From the proof of Theorem \ref{3p}, we see that $x$ is contained in exactly one subgroup isomorphic to $C_{(q+1)/2}$ which is contained in exactly one $D_{q+1}$, and so $\SolG{x}\cong D_{q+1}.$ Then, we notice that the order of an element in $D_{q+1}$ can only be $2$ or some other divisor of $q+1$. Thus, in $\Delta_S(G),$ we have that $x$ is only adjacent to involutions or other elements in the unique $C_{(q+1)/2}$ subgroup that contains $x.$ If we want to make a Hamiltonian cycle in $\Delta_S(G)$, we need to reach every $D_{q+1}$ subgroup at least once such that we reach all of these non-involutions of order dividing $q+1.$ For each $D_{q+1}$ that we visit, we must also visit an involution before and after all of the elements of order dividing $q+1$ because this is the only way to get in to/out of the $D_{q+1}$.

However, we note from before that there are $q(q-1)/2$ involutions in $G$ and there are also exactly $q(q-1)/2$ subgroups isomorphic to $D_{q+1}.$ Thus, there are too few involutions to visit one before and after each $D_{q+1}$ that we visit. Indeed, we would need at least one more involution that $D_{q+1}$ subgroups for this to work. Hence, $\Delta_S(G)$ can never have a Hamiltonian cycle. For $G=\PSL(2, p),$ the exact same logic applies to the $D_{p+1}$ subgroups of which there are $p(p-1)/2$ which is equal to the number of involutions in $G$.
 \end{proof}

\end{section}

\begin{section}{Conjectures and Corollaries}\label{cors}
\begin{Corollary}
    Let $G$ be a minimal simple group. For all $x\in G$, we have $|N_G(\gen{x})|\divides |\SolG{x}|$.
\end{Corollary}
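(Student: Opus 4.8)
The plan is to reduce everything to the classification already established in Theorems~\ref{2p}--\ref{33}, where $\SolG{x}$ (hence $|\SolG{x}|$) is computed for every conjugacy class in every minimal simple group. Since $G$ is minimal simple we have $R(G)=1$, and if $x=1$ then $N_G(\gen{x})=G=\SolG{x}$ and there is nothing to prove; so assume $x\ne 1$. By Lemma~\ref{normalizer}$(a)$, $N_G(\gen{x})\subseteq\SolG{x}$, and $N_G(\gen{x})$ is a subgroup, so it suffices to exhibit, for each order-class of $x$, a subgroup of $G$ that contains $N_G(\gen{x})$ and whose order divides the tabulated value of $|\SolG{x}|$. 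The universal device is: $N_G(\gen{x})$ normalizes $C_G(\gen{x})=C_G(x)$, so $N_G(\gen{x})\le N_G(C_G(x))$; and when $C_G(x)$ is a cyclic maximal torus, $N_G(C_G(x))$ is exactly one of the dihedral (or $\rtimes C_4$) maximal subgroups appearing in the tables.

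For $G=\PSL(2,q)$ with $q\in\{2^p,3^p,p\}$ I would split on the order of $x$. If $x$ is semisimple --- order dividing $(q-1)/\gcd(2,q-1)$ or $(q+1)/\gcd(2,q-1)$, which includes the $|x|=3$ and $|x|=4$ classes whenever they occur in these families, as $\operatorname{char}\nmid|x|$ --- then $C_G(x)$ is the unique maximal torus $T$ containing $x$, so $N_G(\gen{x})\le N_G(T)\in\{D_{2(q\pm1)},D_{q\pm1},D_{p\pm1}\}$; one then reads off $|N_G(T)|$ and checks it divides the corresponding entry of $|\SolG{x}|$ in each residue class of Tables~\ref{tbl2,2p}, \ref{tbl2,3p}, \ref{tbl2,p,1}--\ref{tbl2,p,23}. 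If $x$ is unipotent of order $p=\operatorname{char}$ (only in the $\PSL(2,p)$ family), then $\gen{x}$ equals a Sylow $p$-subgroup, $N_G(\gen{x})$ is the Borel $C_p\rtimes C_{(p-1)/2}$, and this equals $\SolG{x}$. If $x$ is an involution, then $N_G(\gen{x})=C_G(x)$ is a Sylow $2$-subgroup (for $q$ even) or a dihedral group of order $q\mp1$ (for $q$ odd), and one verifies this divides the value of $|\SolG{x}|$ given in the first column of each table.

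For $G=\mathrm{Sz}(q)$ the same approach works: if $|x|$ divides $q-1$, $q_+$ or $q_-$ then $x$ lies in the corresponding cyclic maximal torus $T$, $C_G(x)=T$, and $N_G(\gen{x})\le N_G(T)\in\{D_{2(q-1)},\,C_{q_+}\rtimes C_4,\,C_{q_-}\rtimes C_4\}$, whose order equals or divides the tabulated $|\SolG{x}|$ from Table~\ref{tblsz,2p}. For $|x|=2$ or $4$ one uses that $x^2$ is an involution with $C_G(x^2)$ equal to a Sylow $2$-subgroup $Q$ of order $q^2$: any element normalizing $\gen{x}$ normalizes $\gen{x^2}$ and hence centralizes $x^2$, so $N_G(\gen{x})\le Q$ and $|N_G(\gen{x})|\divides q^2$, which divides $|\SolG{x}|$ in both the $|x|=2$ and $|x|=4$ columns. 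Finally, for $G=\PSL(3,3)$ nothing is needed beyond Table~\ref{tbl3,3}: it lists $|N_G(\gen{x})|$ next to $|\SolG{x}|$ for each of the seven classes, and one checks the seven divisibilities directly (for example $2832=48\cdot 59$, $1026=18\cdot 57$, $\ldots$, $39=39\cdot 1$).

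The one place where the crude bound is not enough is the class $|x|=3$ in $\PSL(2,3^p)$: here $x$ is unipotent but $\gen{x}$ is a proper subgroup of the (elementary abelian) Sylow $3$-subgroup $U$, and $N_G(C_G(x))=N_G(U)$ is the Borel subgroup of order $q(q-1)/2$, which does \emph{not} divide $|\SolG{x}|=q(q+5)/2$. The fix is to compute $C_G(\gen{x})=C_G(x)=U$ (order $q$) and use $N_G(\gen{x})/C_G(\gen{x})\hookrightarrow\operatorname{Aut}(\gen{x})=\operatorname{Aut}(C_3)\cong C_2$, so that $|N_G(\gen{x})|\in\{q,2q\}$; since $p$ is odd we have $q=3^p\equiv 3\pmod 4$, hence $4\divides q+5$ and $2q\divides q(q+5)/2$, so both possibilities divide $|\SolG{x}|$. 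This --- the interplay between the embedding $N/C\hookrightarrow\operatorname{Aut}(\gen{x})$ and a parity observation about the table value --- is the only genuinely non-mechanical step; all the remaining cases are finite comparisons against the tables of Section~\ref{class}.
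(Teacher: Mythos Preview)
Your case-by-case verification is correct and is exactly the approach the paper takes (the paper's own proof is the single sentence that this is ``an easy check using the structure of the minimal simple groups to find the normalizers of cyclic subgroups and using the values of $|\SolG{x}|$ computed in the classification tables''). One small slip: in the Suzuki case you treat $|x|=2$ and $|x|=4$ together via ``$x^2$ is an involution with $C_G(x^2)=Q$'', but for $|x|=2$ one has $x^2=1$, so that bound is vacuous; the correct (and even easier) argument there is $N_G(\gen{x})=C_G(x)=Q$ directly, since $\mathrm{Aut}(C_2)=1$, and then $q^2\mid q^2(4q-3)$ as required.
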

\begin{proof}
    This is an easy check using the structure of the minimal simple groups to find the normalizers of cyclic subgroups and using the values of $|\SolG{x}|$ computed in the classification tables in Section \ref{class}.
\end{proof}
\begin{Conjecture}{\cite{Mousavi}*{Conjecture 3}}
    For a group $G$ and any element $x\in G$, we have $|N_G(\gen{x})|\divides |\SolG{x}|$.
\end{Conjecture}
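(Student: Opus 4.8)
\noindent\emph{Proof proposal.} The plan is to peel off the soluble radical, reduce to the case $R(G)=1$, and there attack the statement through the generalized Fitting subgroup together with the classification of finite simple groups.

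\smallskip\noindent\emph{Step 1 (reduction to $R(G)=1$).} Put $\bar G=G/R(G)$ and $\bar x=xR(G)$; if $x\in R(G)$ the claim is vacuous since $\SolG{x}=G$, so assume $\bar x\neq 1$. The full preimage in $G$ of $N_{\bar G}(\gen{\bar x})$ is $M:=N_G(\gen{x}R(G))$, because $\bar g$ normalizes $\gen{\bar x}=\gen{x}R(G)/R(G)$ exactly when $g$ normalizes $\gen{x}R(G)$; hence $|N_{\bar G}(\gen{\bar x})|=|M|/|R(G)|$, while $\gen{x}\le\gen{x}R(G)$ gives $N_G(\gen{x})\le M$. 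So if the conjecture is known for $\bar G$, then $|M|/|R(G)|=|N_{\bar G}(\gen{\bar x})|$ divides $|Sol_{\bar G}(\bar x)|=|\SolG{x}|/|R(G)|$ by Lemma~\ref{quotient}, whence $|M|\divides|\SolG{x}|$ and a fortiori $|N_G(\gen{x})|\divides|\SolG{x}|$. Thus we may assume $R(G)=1$.

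\smallskip\noindent\emph{Step 2 (what to avoid).} Since $N_G(\gen{x})\subseteq\SolG{x}$ by Lemma~\ref{normalizer}, one is tempted to prove that $\SolG{x}$ is a union of right cosets of $N_G(\gen{x})$, or even of $C_G(x)$, and conclude by Lagrange. This already fails for $G=\mathrm{PSL}(2,4)\cong A_5$ with $x$ an involution: one can find $y\in G\setminus\SolG{x}$ and $z\in C_G(x)$ with $zy\in\SolG{x}$. So the divisibility cannot be seen coset-by-coset; it must be extracted from the global count of $\SolG{x}$.

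\smallskip\noindent\emph{Step 3 (the case $R(G)=1$).} Here $F(G)=1$, so $F^*(G)=E(G)=L_1\times\cdots\times L_k$ with the $L_i$ nonabelian simple, $C_G(F^*(G))=1$, and $G$ embeds in $\operatorname{Aut}(F^*(G))$ permuting the $L_i$. The natural route is: (a) first settle the case $G$ almost simple, $L\trianglelefteq G\le\operatorname{Aut}(L)$, through the classification of finite simple groups, computing $\SolG{x}$ via the recursion $\SolG{x}=\bigcup Sol_M(x)$ over the maximal subgroups $M$ of $G$ containing $x$ (Aschbacher-type descriptions of the $M$, then induction), and comparing the resulting order with $|N_G(\gen{x})|$ — this is precisely the style of computation carried out for the minimal simple groups in Section~\ref{class}, where the preceding Corollary already records the conclusion; (b) then bootstrap from the almost simple case to the general $R(G)=1$ case, tracking how $\gen{x}$, $N_G(\gen{x})$, and the soluble overgroups of $x$ interact with the decomposition of $F^*(G)$ into $G$-orbits of components.

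\smallskip\noindent\emph{Main obstacle.} The real work is part (a): there is no uniform closed form for $|\SolG{x}|$ in an arbitrary almost simple group, so proving $|N_G(\gen{x})|\divides|\SolG{x}|$ seems to require a case analysis across all families of simple groups, parallel to but considerably heavier than Theorems~\ref{2p}--\ref{33}. One could hope to avoid the exact count by exhibiting, for each $x$, a soluble subgroup $H$ with $x\in H$ and $N_G(\gen{x})\le H\le G$ for which $|H|\divides|\SolG{x}|$ can be read off directly; but the $A_5$ example of Step~2 shows that $|\SolG{x}|$ need not be divisible by $|H|$ for every maximal soluble $H\ni x$ (the dihedral overgroups already fail), so such an $H$ must be chosen delicately, and producing it uniformly is the crux of the problem.
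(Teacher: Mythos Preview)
The statement you are attempting is labeled a \textbf{Conjecture} in the paper and is \emph{not} proved there; the paper merely records it (attributing it to \cite{Mousavi}), verifies it for minimal simple groups in the immediately preceding Corollary via the explicit tables of Section~\ref{class}, and remarks that it has been checked by GAP for simple groups of order below $2$ million. There is no proof in the paper to compare against.

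Your proposal is candid about its own status: it is a strategy, not a proof. Step~1 is a clean and correct reduction to the case $R(G)=1$: the containment $N_G(\gen{x})\le N_G(\gen{x}R(G))$ holds since $R(G)\trianglelefteq G$, and together with Lemma~\ref{quotient} this does exactly what you claim. Step~2 is a pertinent warning that rules out the naive coset argument. But Step~3 is where the substance would have to live, and you explicitly leave it open. Part~(a), the almost simple case, you yourself describe as requiring a CFSG case analysis ``considerably heavier'' than Theorems~\ref{2p}--\ref{33}, and you do not attempt it. Part~(b), lifting from almost simple to arbitrary groups with $R(G)=1$, is only gestured at; it is not obvious how $|\SolG{x}|$ behaves with respect to the component decomposition of $F^*(G)$, and this would need its own argument.

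So the genuine gap is simply that the hard part is not done: you have correctly isolated the almost simple case as the crux, but have not supplied a proof there (nor has anyone, as far as the paper indicates). Your reduction in Step~1 is a worthwhile observation and goes slightly beyond what the paper states, but the statement remains an open conjecture after your proposal just as it does in the paper.
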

\textit{Remark:} We have computationally verified this using GAP for all simple groups of order less than $2$ million.

\begin{Corollary}
    If $G$ is a minimal simple group and $x\in G,$ then $|\SolG{x}|\neq p^n$ for an odd prime $p$ and positive integer $n.$
\end{Corollary}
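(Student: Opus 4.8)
The plan is to deduce the statement entirely from the classification of $|\SolG{x}|$ carried out in Section~\ref{class}; no new ideas are needed, only a finite case-by-case inspection of the tables. First I would dispose of $x=1$ separately: here $\SolG{1}=G$, and since $G$ is a nonabelian simple group $|G|$ is even by the Feit--Thompson theorem (indeed divisible by at least three primes), so $|\SolG{1}|$ is not an odd prime power. For $x\neq 1$, every value of $|\SolG{x}|$ occurs in one of Tables~\ref{tbl2,2p}, \ref{tbl2,3p}, \ref{tbl2,p,1}--\ref{tbl2,p,23}, \ref{tblsz,2p}, and~\ref{tbl3,3} by Theorems~\ref{2p}, \ref{3p}, \ref{p}, \ref{sz}, and~\ref{33}, so it suffices to examine those finitely many expressions.

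Next I would handle the four families whose answer is purely a parity statement. For $G=\rm{PSL}(2,2^p)$ and $G=\rm{Sz}(2^p)$, writing $q=2^p$, every entry of Table~\ref{tbl2,2p} (resp.\ Table~\ref{tblsz,2p}) is a multiple of $q$ and hence even. For $G=\rm{PSL}(2,3^p)$ with $q=3^p$, the entries are $q(q+1)$, $q(q+5)/2$, $q(q-1)$, and $q+1$; since $p$ is odd we have $q\equiv 3\pmod 4$, so $q\pm 1$ are even and $q+5\equiv 0\pmod 4$, making every entry even. For $G=\rm{PSL}(2,p)$, scanning Tables~\ref{tbl2,p,1}--\ref{tbl2,p,23} shows that every entry except the one in the $|x|=p$ column is either a multiple of $p-1$ or $p+1$, or an explicit even number such as $4(p+1)$, $5(p+1)$, or $7(p+1)$, and hence even. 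An even number cannot be an odd prime power, so all these cases are settled immediately.

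The only entries requiring an actual argument rather than a parity check are $|\SolG{x}|=p(p-1)/2$ for $|x|=p$ in $\rm{PSL}(2,p)$, and the seven explicit numbers in Table~\ref{tbl3,3}. For the former: if $p\equiv 1\pmod 4$ then $p(p-1)/2$ is even; if $p\equiv 3\pmod 4$ it is odd, but minimality of $G$ forces $p>3$, so $1<(p-1)/2<p$, whence $(p-1)/2$ has a prime divisor $\ell\neq p$, and therefore $p(p-1)/2$ is divisible by the two distinct primes $p$ and $\ell$ and is not a prime power. For $\rm{PSL}(3,3)$, the values $2832$, $1026$, $2376$, $848$, $1368$, $816$ are even, while $39=3\cdot 13$ has two distinct prime divisors; so none is an odd prime power. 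This completes the verification.

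I do not expect a genuine obstacle: once Section~\ref{class} is available the argument is pure bookkeeping. The one spot where a little care is needed is the $|x|=p$ column of $\rm{PSL}(2,p)$, where one must invoke $p>3$ to guarantee that $(p-1)/2$ contributes a prime factor distinct from $p$; note that Lemma~\ref{power of prime} does not apply directly there because $\SolG{x}$ need not be a subgroup, which is precisely why the table-by-table route is the clean one.
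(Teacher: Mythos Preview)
Your approach is exactly the paper's---both deduce the corollary directly from the classification tables in Section~\ref{class}---and your case-by-case verification is a correct (and usefully more explicit) elaboration of the paper's one-line ``this follows immediately from the tables.'' One tiny slip worth fixing: not every entry in Tables~\ref{tbl2,2p} and~\ref{tblsz,2p} is literally a multiple of $q$ (e.g.\ $2(q+1)$ and $4q_\pm$ are not), but each such entry is still visibly even, so your parity conclusion stands unchanged.
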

\begin{proof}
    This follows immediately from the tables in Section \ref{class} giving $|\SolG{x}|$ for the minimal simple groups.
\end{proof}
\begin{Corollary}
    Let $G$ be a minimal simple group and $x\in G.$ Then, $|\SolG{x}|=2^n$ if and only if we are in the following case:
          $G=\PSL(2,2^n-1)$ for $n\equiv 3\pmod 4, n>3$ where $2^n-1$ is a Mersenne prime, and $|x|= 2^m$ for $2<m<n$. 
\end{Corollary}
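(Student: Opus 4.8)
The plan is to run through Thompson's five families of minimal simple groups and read off the values of $|\SolG{x}|$ from the classification in Section~\ref{class}; since a minimal simple group is simple, $R(G)=1$, so the tabulated numbers are exactly the solubilizer sizes, and the claim is that among all of them the only powers of $2$ occur in the family $\rm{PSL}(2,p)$, for the value $p+1$, and only when $p$ is a Mersenne prime. The four families $\rm{PSL}(2,2^p)$, $\rm{PSL}(2,3^p)$, $\rm{Sz}(2^p)$, $\rm{PSL}(3,3)$ I would dispatch directly: every entry of Tables~\ref{tbl2,2p},~\ref{tbl2,3p},~\ref{tblsz,2p},~\ref{tbl3,3}, as well as each $|G|$, has an odd prime divisor. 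Indeed, for $\rm{PSL}(2,2^p)$ the values $3q(q-1)$, $2q(q-1)$, $2(q+1)$, $q(q^2-1)$ are divisible by $3$ or by the odd integer $q\pm1>1$; for $\rm{PSL}(2,3^p)$ the values $q(q+1)$, $q(q+5)/2$, $q(q-1)$, $|G|$ are divisible by $3$, and $q+1=3^p+1\equiv4\pmod 8$ has odd part $(3^p+1)/4>1$; for $\rm{Sz}(2^p)$ each of $q^2(4q-3)$, $q^2(q+3)$, $2q^2(q-1)$, $4q_+$, $4q_-$, $|G|$ has an odd factor exceeding $1$ (respectively $4q-3$, $q+3$, $q-1$, $q_\pm$ --- which are odd since $\sqrt{2q}$ is even --- and $q-1$); and for $\rm{PSL}(3,3)$ one inspects the values $2832$, $1026$, $2376$, $848$, $1368$, $816$, $39$ of Table~\ref{tbl3,3} together with $|G|=5616$.

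For $G=\rm{PSL}(2,p)$ I would argue column by column through Tables~\ref{tbl2,p,1}--\ref{tbl2,p,23}. The entries $p(p^2-1)/2$ (for $x=1$), $p(p-1)/2$ (for $|x|=p$) and $p(p-1)$ (for $4<|x|\mid p-1$) are divisible by the odd prime $p$. For $|x|=2$ the possibilities $(p-1)(2p+3)$, $(p-1)(2p-1)$, $(p+1)(p+4)$, $p(p+1)$ each carry an odd factor $>1$; for $|x|=4$ the entries $(p-1)(p+4)$, $5(p+1)$ carry the odd factors $p+4$ and $5$; for $|x|=3$ the entries $(p-1)(p+6)$, $7(p+1)$ carry the odd factors $p+6$ and $7$. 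The two remaining entries are killed by elementary arithmetic: $(p-1)(p+3)=2^n$ would force $p-1$ and $p+3$ to be powers of $2$ differing by $4$, hence $p=5$, which is excluded since $5\not\equiv2,3\pmod5$; and $4(p+1)=2^n$ would force $p=2^{n-2}-1$ to be a Mersenne prime, hence $p\equiv7\pmod{24}$ (since $n-2$ is then an odd prime $\ge3$ and $2^k-1\equiv7\pmod{24}$ whenever $k$ is odd and $k\ge3$), contradicting the residues $p\equiv5$ or $11\pmod{24}$ under which the value $4(p+1)$ occurs. Hence the only entry that can be a power of $2$ is the $|x|\mid p+1$ entry, which equals $p+1$; it is a power of $2$ exactly when $p+1=2^n$, i.e.\ when $p=2^n-1$ is a Mersenne prime, and then $|\SolG{x}|=2^n$ precisely for the $x$ with $|x|\mid p+1$ and $|x|>4$, for which $\SolG{x}\cong D_{p+1}$ by the proof of Theorem~\ref{p}.

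It remains to recast this in the stated form. If $p=2^n-1$ is a Mersenne prime lying in this family then $n$ is prime and $p\equiv2$ or $3\pmod5$; since $2^n-1$ modulo $5$ is periodic with period $4$ in $n$, this forces $n\equiv2$ or $3\pmod4$, and $n$ prime rules out $n\equiv2\pmod4$ except $n=2$, which gives the excluded $p=3$, so $n\equiv3\pmod4$. For such $G$, $D_{p+1}\cong D_{2^n}$ has cyclic part $C_{2^{n-1}}$, so its elements of $2$-power order exceeding $4$ are exactly those of order $2^m$ with $3\le m\le n-1$, each contained in a unique $C_{2^{n-1}}$ and hence with $|\SolG{x}|=|D_{p+1}|=2^n$; conversely these are the only such $x$, since (using $p\equiv7\pmod{24}$) involutions give $(p+1)(p+4)$ and order-$4$ elements give $5(p+1)$, neither a power of $2$, and all larger orders were excluded above. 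The set of admissible exponents $\{m:2<m<n\}$ is nonempty exactly when $n>3$, which is why $\rm{PSL}(2,7)$ is absent from the list. The step I expect to require the most care is this $\rm{PSL}(2,p)$ bookkeeping --- checking across all eight residues of $p$ modulo $24$ that no table entry other than $p+1$ is a power of $2$, the crux being that a Mersenne prime $2^k-1$ with $k$ an odd prime is $\equiv7\pmod{24}$ (which rules out $4(p+1)$ and $(p-1)(p+3)$) --- together with correctly identifying that the orders realizing $|\SolG{x}|=p+1$ are precisely $|x|=2^m$ with $2<m<n$.
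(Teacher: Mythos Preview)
Your argument is correct and follows exactly the approach the paper indicates: the paper's own proof is the single sentence ``this follows immediately from the tables in Section~\ref{class}'' together with the example $G=\rm{PSL}(2,127)$, and what you have written is precisely the case-by-case verification that sentence asserts. Your handling of the two delicate entries $4(p+1)$ and $(p-1)(p+3)$ via the observation that a Mersenne prime $2^k-1$ with $k\ge3$ odd satisfies $2^k-1\equiv7\pmod{24}$, and your identification of the admissible orders $|x|=2^m$, $2<m<n$, by locating the relevant $p$ in Table~\ref{tbl2,p,7} (so that order-$4$ elements give $5(p+1)$ rather than $p+1$), are exactly the details one needs and are correct.
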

\begin{proof}
    Again, this follows immediately from the tables in Section \ref{class} giving $|\SolG{x}|$ for the minimal simple groups. For instance, if $G=\PSL(2,127)$ and $|x|=8,16,32,$ or $64,$ then $|\SolG{x}|=128.$
\end{proof}
\begin{Conjecture}{\cite{Mousavi}*{Conjecture 2}}
    If $G$ is any finite insoluble group, then $|\SolG{x}|\neq p^n$ for any odd prime $p$ and positive integer $n.$
\end{Conjecture}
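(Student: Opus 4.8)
\emph{A proposed line of attack.} This remains conjectural; the strategy we would pursue runs as follows. The plan is to argue by contradiction from a counterexample $G$ of least order: $G$ insoluble, $p$ an odd prime, $n\ge 1$, $x\in G$, and $|\SolG{x}|=p^n$. By Lemma~\ref{normalizer}(b) both $|x|$ and $|R(G)|$ divide $p^n$, so $x$ is a $p$-element and $R(G)$ a $p$-group; if $R(G)\neq 1$ then, by Lemma~\ref{quotient}, $G/R(G)$ is a smaller counterexample (still insoluble, with $xR(G)$ outside its soluble radical --- otherwise $G$ would be soluble --- and $|\mathrm{Sol}_{G/R(G)}(xR(G))|=p^{n}/|R(G)|$ still a power of $p$), so we may assume $R(G)=1$. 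If $\SolG{x}$ is a subgroup we are done by Lemma~\ref{power of prime} (which holds for odd $p$), and if $n\le 2$ by \cite{Akbari3}; hence we may assume $\SolG{x}$ is not a subgroup and $n\ge 3$.

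Next I would exploit that, since $R(G)=1$, the socle $\mathrm{soc}(G)$ is a direct product $S_1\times\cdots\times S_k$ of non-abelian simple groups, $C_G(\mathrm{soc}(G))=1$, and $G$ embeds in $\mathrm{Aut}(\mathrm{soc}(G))$. Several honest subgroups of $G$ are trapped inside the \emph{set} $\SolG{x}$ and therefore have order at most $p^n$: the normaliser $N_G(\gen{x})$ by Lemma~\ref{normalizer}(a); the centraliser $C_G(x)$, since $\gen{x,y}$ is abelian whenever $y\in C_G(x)$; and every $p$-subgroup of $G$ containing $x$, so that in particular a Sylow $p$-subgroup $P\ni x$ has $|P|\le p^n$. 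Because $p$ is odd, $|P_1\cup P_2|>|P|$ for distinct Sylow $p$-subgroups $P_1,P_2$ through $x$, and more generally overlapping soluble overgroups of $x$ inflate $|\SolG{x}|$ rapidly, so the hypothesis forces $x$ to have an unusually small centraliser and essentially a unique Sylow $p$-subgroup through it. Feeding this, together with insolubility and $R(G)=1$, into the classification of finite simple groups should rule out $k\ge 2$ (a $p$-element of a nontrivial direct product keeps too large a centraliser, and no $|S_i|$ is a prime power) and reduce to $G$ almost simple with socle $S$; Schreier's conjecture and the classification then control $G/S$, leaving finitely many possibilities for $S$ to treat.

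For almost simple $G$ one would want the analogue of the tables of Section~\ref{class}, which already settle the case $G$ minimal simple --- there the Corollary above shows $|\SolG{x}|$ is never an odd prime power --- and then verify in each remaining almost simple case that $|\SolG{x}|$ is never an odd prime power. The main obstacle is that this last step does not reduce cleanly: passing to a proper subgroup $H$ destroys the hypothesis, since $\mathrm{Sol}_H(x)=\SolG{x}\cap H$ need not be a prime power, so there is no cheap induction on subgroups; and when $G$ has outer automorphisms the element $x$ may be outer, so the conjugacy counts of Section~\ref{class} must be redone case by case. A complementary tool I would set up is a $p$-analogue of Lemma~\ref{invol}: if $p$ is odd and all elements of order $p$ in $G$ are conjugate, then $|\SolG{x}|\cdot|\{g\in G:|g|=p\}|=\sum_{y\in G}|\{z\in\SolG{y}:|z|=p\}|$, turning the problem into a global count over $G$ that may be more tractable for almost simple groups than the union-of-subgroups description. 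Carrying all of this out rigorously --- in particular the CFSG bookkeeping in the almost simple case --- is exactly the work that keeps the statement open.
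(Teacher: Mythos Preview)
The statement you are addressing is presented in the paper as a \emph{conjecture}, not a theorem: the paper gives no proof and no proof sketch, only the remark that it has been verified by GAP for all simple groups of order below two million. So there is no ``paper's own proof'' to compare against, and your opening disclaimer that the statement remains conjectural is exactly right.

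Your outline is a sensible reduction programme and goes well beyond what the paper offers. The minimal-counterexample step and the passage to $R(G)=1$ via Lemma~\ref{quotient} are correct (note $x\notin R(G)$, else $\SolG{x}=G$ would force $G$ to be a $p$-group), and the observation that Lemma~\ref{power of prime} and the $p,p^2$ result of \cite{Akbari3} dispose of the subgroup case and $n\le 2$ is accurate. You are also candid about the genuine obstruction: after reducing to almost simple $G$, there is no clean inductive descent, and one is left with a CFSG case analysis that nobody has carried out. That is precisely why the statement is still a conjecture; your proposal does not close the gap, but you do not claim it does.
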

\textit{Remark:} We have computationally verified this using GAP for all simple groups of order less than $2$ million. Note that in {\cite{Mousavi}*{Conjecture 1}}, it was also speculated that if $|\SolG{x}|=2^n$, then $\SolG{x}$ is a subgroup of $G$. We computationally find a counterexample. Namely, if $G=\rm{A}_8$ and $x$ is an order $4$ element in the conjugacy class containing $(1,2,3,4)(5,6)$, then $|\SolG{x}|=1024$ but clearly $\SolG{x}$ is not a subgroup of $G$ since $1024\ndivides |G|$.

As mentioned in the introduction, a finite group is soluble if any $2$-generated subgroup is soluble. In \cite{GW}, the authors provided a probabilistic version of this result as follows. Given a finite group $G$, if the probability that two randomly chosen elements of $G$ generate a soluble group is greater than $11/30$, then $G$ is soluble.

Given an element $x$ in a finite group $G$, we introduce the {\em solubilizer probability} of $x$, denoted by  $P_S(x)$, as follows:
\[P_S(x)=\frac{|\SolG{x}|}{|G|}.\]
Given the minimal simple groups, we noticed that in most of the cases for any nontrivial element $x$ other than involutions, $P_S(x)\le \frac{1}{2}$. Indeed, we can state the following corollary:

\begin{Corollary}\label{pt5}
    Let $G$ be a minimal simple group and $x$ be a nontrivial element of $G$. Then $P_S(x)> \frac{1}{2}$ if and only if one of the following cases is satisfied:
    \begin{enumerate}
        \item [{\rm (1)}] $G= \rm{A}_5$, $x$ is any involution, and $P_S(x)=\frac{3}{5},$
        \item [{\rm (2)}] $G= \PSL(2,7)$, $x$ is any involution, and $P_S(x)=\frac{11}{21}$,
        \item [{\rm (3)}] $G= \PSL(3,3)$, $x$ is any involution, and $P_S(x)=\frac{59}{117}.$
    \end{enumerate}
\end{Corollary}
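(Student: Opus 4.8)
The plan is to exploit the identity $P_S(x) = |\SolG{x}|/|G|$ together with the complete lists of $|\SolG{x}|$ produced in Theorems \ref{2p}, \ref{3p}, \ref{p}, \ref{sz}, and \ref{33}, and the group orders $|\rm{PSL}(2,q)| = q(q^2-1)/\gcd(2,q-1)$, $|\rm{Sz}(q)| = q^2(q-1)(q^2+1)$, and $|\rm{PSL}(3,3)| = 5616$. For each of the five families of minimal simple groups and each admissible order of $x$, I would write $|\SolG{x}|/|G|$ as an explicit rational function of the defining parameter and solve the inequality $|\SolG{x}|/|G| > \tfrac12$.

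What makes this manageable is that, reading off the classification tables, $|\SolG{x}|$ is $O(q^2)$ in the $\rm{PSL}(2,q)$ families and $O(q^3)$ in the Suzuki family, while $|G|$ is $\Theta(q^3)$, respectively $\Theta(q^4)$; hence $P_S(x) = O(1/q)$ uniformly in the order of $x$, so $P_S(x) > \tfrac12$ can only hold below an explicit and small bound on the parameter. Concretely, for $\rm{PSL}(2,2^p)$ one reads off $P_S(x) = 3/(q+1)$ when $|x|=2$, $2/(q+1)$ when $|x|\divides q-1$, and $2/(q(q-1))$ when $|x|\divides q+1$; for $\rm{PSL}(2,3^p)$ the involution ratio is $2/(q-1)$ and the other columns are even smaller; for $\rm{Sz}(2^p)$ the involution ratio is $(4q-3)/((q-1)(q^2+1))$ and every other column is at most $2/(q^2+1)$. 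In each of these three families the smallest permitted parameter ($q=4$, $q=27$, $q=8$ respectively) already settles matters, and the only survivor is $\rm{PSL}(2,4)\cong\rm{A}_5$, with $P_S(x) = 3/5$ at an involution.

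The only family that needs genuine bookkeeping is $\rm{PSL}(2,p)$: there are eight congruence classes modulo $24$, each with its own table, and within each the columns $|x| = 2, 3, 4$ must be examined separately (the columns $|x| = p$ and $|x|\divides p\pm 1$ with $|x| > 4$ are uniform across all eight tables and give ratios $\le 2/(p+1)$). Each resulting inequality is a quadratic in $p$, and solving them shows the threshold is cleared only when $p = 7$ and only in the involution column, where the $p\equiv 7\pmod{24}$ table gives $|\SolG{x}| = (p+1)(p+4) = 88$, so $P_S(x) = 88/168 = 11/21 > \tfrac12$ (whereas, for example, $|x|=3$ there gives $78/168 = 13/28 < \tfrac12$). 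For $\rm{PSL}(3,3)$ the needed data is precisely what was computed in GAP for Theorem \ref{33}: the only value of $|\SolG{x}|$ exceeding $5616/2$ is $2832$, attained at an involution, giving $P_S(x) = 2832/5616 = 59/117 > \tfrac12$. Collecting the three survivors yields the stated list.

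The main obstacle is not any single inequality — all are routine quadratics — but the risk of miscounting groups through small exceptional isomorphisms, so that is the step I would be most careful about: $\rm{PSL}(2,5)\cong\rm{A}_5\cong\rm{PSL}(2,4)$, yet $5\not\equiv 2,3\pmod 5$, so $\rm{A}_5$ enters the analysis only through the $\rm{PSL}(2,2^p)$ family; $\rm{PSL}(2,9)\cong\rm{A}_6$ is not minimal simple and must be excluded; $\rm{PSL}(2,7)\cong\rm{PSL}(3,2)$ must be counted once; and the prime $p=2$ must be included in the $\rm{PSL}(2,2^p)$ family so that $\rm{A}_5$ is not lost. Once each minimal simple group is treated exactly once, the three cases in the statement are exactly those clearing $P_S(x) > \tfrac12$.
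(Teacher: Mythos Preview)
Your proposal is correct and follows exactly the approach the paper uses: the paper's proof is the single sentence ``We can easily get the desired result using the information about $|\SolG{x}|$ obtained in the tables in Section~\ref{class},'' and your argument is simply a fully fleshed-out version of that same computation. Your care with the exceptional isomorphisms and small parameter values is more than the paper itself records, but it is precisely what ``using the tables'' entails.
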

\begin{proof}
  Using all information about $|\SolG{x}|$ obtained in the tables in Section \ref{class}, it is not hard to find the result.
\end{proof}

Using GAP, we computed the solubilizer probability numbers
for the non-abelian simple groups of various orders not exceeding $2$ million. In fact, except for the three minimal simple groups listed in Corollary \ref{pt5}, we have only found one other group with $P_S(x)> \frac{1}{2}$, namely an involution $x$ of the orthogonal group $O(5,3)$ with $P_S(x)=\frac{5}{9}$. So the following conjecture arises: 
\begin{Conjecture}
    Let $G$ be an insoluble group. Then for all elements $x\in G\setminus R(G)$, we have $P_S(x) \leq \frac{3}{5}$. Furthermore, given a nontrivial element $x\in G$ which is not an involution, we have $P_S(x)<\frac{1}{2}$.
\end{Conjecture}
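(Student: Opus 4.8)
Since the final assertion is stated as a conjecture, what follows is a proposed line of attack rather than a complete proof.

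The first step is to reduce to groups with trivial soluble radical. By Lemma \ref{quotient}, for a normal soluble subgroup $N$ one has $|\SolG{x}|/|N|=|\mathrm{Sol}_{G/N}(xN)|$ while $|G|/|N|=|G/N|$, so $P_S(x)$ is unchanged on passing to $G/N$; taking $N=R(G)$ we may assume $R(G)=1$. Then $\operatorname{soc}(G)$ is a direct product of non-abelian simple groups, $C_G(\operatorname{soc}(G))=1$, and $G$ embeds in $\mathrm{Aut}(\operatorname{soc}(G))$. One then argues by induction on $|G|$, the base case being the almost simple groups; the minimal simple groups are already settled (Corollary \ref{pt5}), and since $\mathrm{A}_5$ attains the value $\tfrac35$, the conjecture predicts that $\mathrm{A}_5$ is extremal among all insoluble groups.

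Next I would reduce to the almost simple case. Since $G$ is insoluble and $x\neq 1$, every $y\in\SolG{x}$ lies in a proper soluble subgroup, hence in some maximal subgroup of $G$ containing $x$, so $\SolG{x}\subseteq\bigcup_{x\in M} M$, the union over maximal subgroups $M\ni x$. Each such $M$ either contains $N:=\operatorname{soc}(G)$, in which case $M/N<G/N$ and the inductive bound applies after quotienting, or $M\cap N$ is a proper subgroup of $N$, whose structure is then controlled by the maximal subgroups of the simple factors of $N$ and by the almost simple quotients $N_G(S)/C_G(S)$ for simple direct factors $S$ of $N$. The hope is to combine these, using crucially that the union runs over \emph{maximal} subgroups (a sparse family) rather than over all of $G$, to bootstrap the bounds from the almost simple case to all insoluble $G$. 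For the almost simple base case itself, with socle a simple group $S$, the plan follows Section \ref{class} in spirit but now requires the classification of finite simple groups and quantitative control of their maximal subgroups: one bounds $|\SolG{x}|$ by $\sum_{x\in M}|M|$ over maximal subgroups containing a soluble subgroup through $x$, using that soluble subgroups of $S$ have order $O(|S|^{2/3})$ and that an element of reasonably large order lies in few maximal subgroups, so that $P_S(x)$ falls well below $\tfrac12$ except for finitely many pairs $(S,|x|)$, which are checked directly — exactly the phenomenon by which Corollary \ref{pt5} isolates $\mathrm{A}_5$, $\mathrm{PSL}(2,7)$, $\mathrm{PSL}(3,3)$ and $O(5,3)$. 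The refinement for non-involutions exploits that involutions are the genuine source of large solubilizers: any two involutions generate a dihedral, hence soluble, group, so the involutions form a clique in $\Gamma_S(G)$ contributing a guaranteed block to $\SolG{x}$ when $x$ is an involution, whereas for $|x|>2$ no such universal block exists and the maximal-subgroup estimate should yield the strict inequality $P_S(x)<\tfrac12$ once the small exceptions are eliminated.

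The principal obstacle, in my view, is the reduction from an arbitrary insoluble group to the almost simple case: the solubilizer, unlike the normalizer or centralizer, behaves badly under direct products and extensions, so expressing $\SolG{x}$ in terms of the almost simple constituents of $G$ requires carefully tracking how a soluble subgroup containing $x$ meets $\operatorname{soc}(G)$ and projects onto each simple factor. A secondary difficulty is that, unlike in minimal simple groups, maximal subgroups of a general simple group need not be soluble, so $\SolG{x}$ is a union over maximal \emph{soluble} subgroups — harder to enumerate, with less transparent pairwise intersections than the cyclic and dihedral ones used in Section \ref{class} — which makes even the almost simple case a substantial undertaking.
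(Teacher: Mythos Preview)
The paper does not prove this statement; it is genuinely left as a conjecture. The only evidence the paper provides is (i) the verification for minimal simple groups via Corollary~\ref{pt5}, and (ii) a GAP computation for simple groups of order below two million, which turns up the additional example $O(5,3)$. There is therefore no proof in the paper against which to compare your attempt.

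That said, your proposed line of attack is reasonable and correctly diagnoses the difficulties. The reduction to $R(G)=1$ via Lemma~\ref{quotient} is valid. The subsequent reduction to the almost simple case is where the real content would lie, and you are right to flag that the solubilizer does not factor through direct products in any obvious way; this is the genuine obstruction. One small inaccuracy: you attribute the isolation of $O(5,3)$ to Corollary~\ref{pt5}, but that corollary only treats minimal simple groups; $O(5,3)$ was found separately by computation. Also, your claim that soluble subgroups of a simple group $S$ have order $O(|S|^{2/3})$ would need a precise reference and is not sharp enough on its own, since summing $|M|$ over all maximal soluble $M\ni x$ could still be large if there are many such $M$; controlling that multiplicity is precisely what the tables in Section~\ref{class} achieve for minimal simple groups, and extending this to all finite simple groups is the hard part you correctly identify.
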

    \begin{Corollary}
    If $G$ is a minimal simple group and $x\in G$, then $\SolG{x}=N_G(\gen{x})$ if and only if one of the following cases is satisfied:
    \begin{enumerate}
        \item [{\rm (1)}] $G=\PSL(2, 2^p)$ where $p$ is a prime and $|x|\divides 2^p+1,$ 
        \item [{\rm (2)}] $G=\PSL(2,3^p)$ where $p$ is an odd prime, and $|x|\divides 3^p+1$ when $|x|\neq 2,$
        \item [{\rm (3)}] $G=\PSL(2, p)$ where $p$ is a prime with $p\equiv 2$ or $3$ $\pmod 5$, and $|x|\divides p+1$ when $|x|> 4,$
        \item [{\rm (4)}] $G=\PSL(2, p)$ where $p$ is a prime with $p\equiv 2$ or $3$ $\pmod 5$, and $|x|= p,$
        \item [{\rm (5)}] $G=\Sz(2^p)$ where $p$ is an odd prime and $|x|\divides 2^p+2^{(p+1)/2}+1,$
        \item [{\rm (6)}] $G=\Sz(2^p)$ where $p$ is an odd prime and $|x|\divides 2^p-2^{(p+1)/2}+1,$
        \item [{\rm (7)}] $G=\PSL(3,3)$ and $|x|=13.$
    \end{enumerate}
\end{Corollary}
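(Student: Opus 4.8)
The plan is to reduce the corollary to a direct inspection of the classification tables of Section \ref{class}; throughout I assume $x\neq 1$, the case $x=1$ giving the degenerate identity $\SolG{1}=N_G(\gen{1})=G$. The first step is the reduction: $\SolG{x}=N_G(\gen{x})$ if and only if $x$ is contained in a unique maximal subgroup $M$ of $G$ and $M=N_G(\gen{x})$. Indeed, by Lemma \ref{sol in minimal simple}, $\SolG{x}$ is the union of the maximal subgroups of $G$ containing $x$ (there is at least one, since $\gen{x}$ is a proper subgroup of $G$); by Lemma \ref{normalizer}$(a)$, $N_G(\gen{x})\subseteq\SolG{x}$; and $N_G(\gen{x})\lneq G$, because $\gen{x}$ is a nontrivial proper, hence non-normal, subgroup of the simple non-abelian group $G$. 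So if $\SolG{x}=N_G(\gen{x})$, then every maximal $M\ni x$ satisfies $M\subseteq N_G(\gen{x})\lneq G$, forcing $M=N_G(\gen{x})$ by maximality; in particular such an $M$ is unique. The converse is exactly the Remark following Lemma \ref{normalizer}.

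The second step is to list the elements lying in a unique maximal subgroup, which I would read off Tables \ref{tbl2,2p}, \ref{tbl2,3p}, \ref{tbl2,p,1}--\ref{tbl2,p,23}, \ref{tblsz,2p} and \ref{tbl3,3} by summing, column by column, the listed multiplicities of the maximal subgroups of each isomorphism type. The sum equals $1$ precisely in the columns describing the elements of items (1)--(7): $|x|\divides q+1$ for $\rm{PSL}(2,2^p)$; $|x|\divides q+1$ with $|x|\neq 2$ for $\rm{PSL}(2,3^p)$; $|x|\divides p+1$ with $|x|>4$ and $|x|=p$ for $\rm{PSL}(2,p)$ (this needs to be checked against each of the eight congruence tables); $|x|\divides q_+$ and $|x|\divides q_-$ for $\rm{Sz}(2^p)$; and $|x|=13$ for $\rm{PSL}(3,3)$. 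In every other column the sum is at least $2$---for instance it is $q+1$ for involutions in $\rm{PSL}(2,2^p)$ and $3$ whenever $|x|\divides q-1$---so by the first step $\SolG{x}\supsetneq N_G(\gen{x})$ for those elements.

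The third step checks, for each of the seven families, that the unique maximal subgroup is in fact $N_G(\gen{x})$. For the columns labelled $|x|\divides q+1$ (respectively $|x|\divides p+1$) the unique maximal subgroup is a dihedral group $D$, of order $2(q+1)$ when the defining field has even size and $q+1$ when it has odd size, and the proofs of Theorems \ref{2p} and \ref{3p} already show that such an $x$ lies in a unique cyclic subgroup $\gen{y}$ with $N_G(\gen{y})=D\le N_G(\gen{x})$; since $D$ is maximal and $N_G(\gen{x})\lneq G$, equality follows. The same reasoning handles the corresponding columns of the $\rm{PSL}(2,p)$ tables and, since those columns of Table \ref{tblsz,2p} were established exactly as in Theorem \ref{3p}, the two Suzuki cases with unique maximal subgroup $C_{q_\pm}\rtimes C_4$. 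For $|x|=p$ in $\rm{PSL}(2,p)$, $\gen{x}$ is the normal Sylow $p$-subgroup of the maximal subgroup $\cp$, so $\cp\le N_G(\gen{x})\lneq G$ and maximality gives $N_G(\gen{x})=\cp$. For $|x|=13$ in $\rm{PSL}(3,3)$, Table \ref{tbl3,3} records $|N_G(\gen{x})|=39=|C_{13}\rtimes C_3|$, and since $N_G(\gen{x})\le\SolG{x}=C_{13}\rtimes C_3$, comparing orders gives equality. Combining the three steps proves both directions.

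The main obstacle is not a deep point but the bookkeeping. One must be sure that no column of the eight $\rm{PSL}(2,p)$ tables has column-sum $1$ beyond the two already identified---here the congruences modulo $8$ and modulo $3$ need watching, since they determine whether order-$3$ and order-$4$ elements have their own columns and whether the exceptional maximal subgroup is $\rm{A}_4$ or $\rm{S}_4$---and one must verify, case by case and not merely from uniqueness, that the unique maximal subgroup coincides with $N_G(\gen{x})$ in each of the seven situations.
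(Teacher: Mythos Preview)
Your proposal is correct and follows essentially the same approach as the paper, which simply states that the corollary ``follows immediately from the tables in Section~\ref{class}.'' Your version is more explicit: you give the structural reduction (unique maximal subgroup equal to $N_G(\gen{x})$) and then read off the column sums from the tables, whereas the paper implicitly compares $|\SolG{x}|$ from the tables with the easily-computed $|N_G(\gen{x})|$; both routes amount to the same inspection of the classification data.
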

\begin{proof}
    This follows immediately from the tables in Section \ref{class}.
\end{proof}

\begin{Corollary}\label{bounds}
    Given a minimal simple group $G$, we obtain the following bounds on the chromatic number of $\Delta_S(G)$:
    \begin{enumerate}
        \item [{\rm (1)}] If $G=\PSL(2, 2^p)$ where $p$ a prime, then $2^{2p}-1\le \chi(\Delta_S(G))\le 3\cdot2^p(2^p-1)-2,$
        \item [{\rm (2)}] If $G=\PSL(2,3^p)$ where $p$ an odd prime, then $\frac{3^p(3^p-1)}{2}\le \chi(\Delta_S(G))\le 3^p(3^p+1)-2,$
        \item [{\rm (3)}] If $G=\PSL(2, p)$ where $p$ is a prime with $p\equiv 2$ or $3$ mod $5$, then $\frac{p(p-1)}{2}\le \chi(\Delta_S(G))\le (p-1)(2p+3)-2,$
        \item [{\rm (4)}] $G=\Sz(2^p)$ where $p$ is an odd prime, then $(2^{2p}+1)(2^p-1)\le \chi(\Delta_S(G))\le 2^{2p}(4\cdot 2^p-3)-2,$
        \item [{\rm (5)}] $G=\PSL(3,3)$, then $431 \le \chi(\Delta_S(G))\le 2830.$
    \end{enumerate}
\end{Corollary}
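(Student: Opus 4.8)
The plan is to sandwich $\chi(\Delta_S(G))$ between the clique number of $\Delta_S(G)$ from below and its maximum degree from above (via Brooks' theorem), and then read off the numerical values from the classification of $|\SolG{x}|$ in Section \ref{class}. For the lower bounds, recall that $\chi(\Gamma)\ge\omega(\Gamma)$ for every graph $\Gamma$, where $\omega$ denotes the clique number. The key observation is that the set $I(G)$ of involutions of $G$ spans a clique in $\Delta_S(G)$: any two distinct involutions $x,y$ are nontrivial (hence vertices, as $R(G)=1$) and $\langle x,y\rangle$ is dihedral, therefore soluble, so $x$ and $y$ are adjacent. Hence $\chi(\Delta_S(G))\ge|I(G)|$, and it remains to count involutions. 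These counts are recorded in the proofs of Section \ref{class}: there are $q^2-1$ involutions in $\rm{PSL}(2,2^p)$, $q(q-1)/2$ in $\rm{PSL}(2,3^p)$ (as $q\equiv 3\pmod 4$), at least $p(p-1)/2$ in $\rm{PSL}(2,p)$ (the exact number being $p(p-1)/2$ or $p(p+1)/2$ according to $p$ modulo $4$), and $(q^2+1)(q-1)$ in $\rm{Sz}(2^p)$; this yields the lower bounds in cases (1)--(4). For $\rm{PSL}(3,3)$ the involution clique has only $117$ vertices, which is not enough, so instead we exhibit a clique of size $431$ found by a direct clique search in GAP \cite{GAP} on the adjacency matrix of $\Delta_S(\rm{PSL}(3,3))$; we claim only that $\chi(\Delta_S(\rm{PSL}(3,3)))\ge 431$, not that $431$ is the exact clique number.

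For the upper bounds we apply Brooks' theorem: a connected graph that is neither complete nor an odd cycle has chromatic number at most its maximum degree. By \cite{Akbari 2}*{Corollary 2.2}, $\Delta_S(G)$ is connected; it is not complete since $G$ is insoluble, so some pair $x,y$ generates an insoluble subgroup and is nonadjacent; and it is clearly not an odd cycle, having many vertices of degree exceeding $2$. Thus $\chi(\Delta_S(G))\le\Delta(\Delta_S(G))$. As in the proof of Theorem \ref{eul}, the degree of a vertex $x$ is $|\SolG{x}|-|R(G)|-1=|\SolG{x}|-2$ because $R(G)=1$ for a minimal simple group, so $\Delta(\Delta_S(G))=\max_{x\in G}|\SolG{x}|-2$. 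Inspecting the $|\SolG{x}|$ rows of Tables \ref{tbl2,2p}--\ref{tbl3,3}, the maximum is always attained at an involution and equals $3q(q-1)$ for $\rm{PSL}(2,2^p)$, $q(q+1)$ for $\rm{PSL}(2,3^p)$, $q^2(4q-3)$ for $\rm{Sz}(2^p)$, and $2832$ for $\rm{PSL}(3,3)$, while for $\rm{PSL}(2,p)$ the involution value is bounded above by $(p-1)(2p+3)$ across all eight residue classes modulo $24$ (with equality when $p\equiv 1$ or $17\pmod{24}$). Subtracting $2$ gives the five upper bounds.

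The main obstacle is the $\rm{PSL}(3,3)$ lower bound: unlike for the infinite families, the involution clique is far from optimal, so a computational search for a large clique seems unavoidable (and we do not attempt to determine the exact clique number). A secondary, more routine point requiring care is verifying the hypotheses of Brooks' theorem --- in particular that $\Delta_S(G)$ is not complete --- and, for $\rm{PSL}(2,p)$, the bookkeeping needed to confirm that the solubilizer of an involution is the largest one and is bounded uniformly by $(p-1)(2p+3)$, which is not tight for every admissible prime $p$.
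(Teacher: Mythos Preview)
Your argument is correct and follows the same overall strategy as the paper: involution cliques for the lower bounds in the infinite families, and Brooks' theorem applied to the maximum solubilizer size for the upper bounds.

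The one genuine difference is the $\rm{PSL}(3,3)$ lower bound. You appeal to a computational clique search in GAP, whereas the paper observes that the maximal subgroup $(C_3^2\rtimes Q_8)\rtimes C_3$ of order $432$ is soluble (automatically, since every proper subgroup of a minimal simple group is soluble), so its $431$ non-identity elements form a clique in $\Delta_S(G)$. This is an instance of the general fact that any soluble subgroup $H\le G$ yields a clique $H\setminus R(G)$ in $\Delta_S(G)$, since $\langle x,y\rangle\le H$ is soluble for all $x,y\in H$. The paper's argument is conceptual and requires no computation; your approach would work but obscures the reason the bound is exactly $431$ and leaves you dependent on an unverified machine search. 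Incidentally, the same observation could be used to sharpen the $\rm{PSL}(2,7)$ lower bound via $\rm{S}_4$, as the paper later does in Section~\ref{comp}.
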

\begin{proof}
    The lower bounds follow from the number of colors that we need for involutions as they form a complete subgraph. So we require at least as many colors as the the number of involutions. The upper bounds follow from Brook's theorem and the maximum degree of vertices that we computed due to our classifications of the solubilizers in minimal simple groups. The lower bound for $\PSL(3,3)$ comes from the maximal subgroup $(C_3^2\rtimes Q_8)\rtimes C_3$ of size $432$ which is soluble and so forms a complete subgraph of $\Delta_S(G)$ with $431$ vertices.
\end{proof}
We can also use Lemma \ref{eul} to find the minimal simple groups $G$ such that $\Delta_S(G)$ is Eulerian.
\begin{Corollary}
    Let $G$ be a minimal simple group. Then $\Delta_S(G)$ is Eulerian if and only if $G$ is not one of the following cases:
    \begin{enumerate}
        \item [{\rm (1)}] $G=\PSL(2, p)$ for $p>3$ a prime such that $p\equiv 3$ or $7\pmod{20},$ 
        \item [{\rm (2)}] $G=\PSL(3,3).$
    \end{enumerate}
\end{Corollary}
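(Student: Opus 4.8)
The plan is to reduce everything to Theorem \ref{eul} and then read off parities from the classification tables of Section \ref{class}. Since a minimal simple group $G$ is non-abelian simple, $R(G)=1$, so $|R(G)|=1$ is odd, and Theorem \ref{eul} tells us that $\Delta_S(G)$ is Eulerian if and only if $|\mathrm{Sol}_G(x)|$ is even for every $x\in G$. The element $x=1$ gives $|\mathrm{Sol}_G(1)|=|G|$, which is even because an insoluble group has even order (Feit--Thompson), so identity elements never obstruct the condition; thus the corollary becomes a parity check on the entries of Tables \ref{tbl2,2p}, \ref{tbl2,3p}, \ref{tbl2,p,1}--\ref{tbl2,p,23}, \ref{tblsz,2p} and \ref{tbl3,3}.

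I would first dispose of $\mathrm{PSL}(2,2^p)$ and $\mathrm{Sz}(2^p)$. Here $q=2^p$ is even, and every entry of Tables \ref{tbl2,2p} and \ref{tblsz,2p} -- namely $3q(q-1)$, $2q(q-1)$, $2(q+1)$, $q^2(4q-3)$, $q^2(q+3)$, $2q^2(q-1)$ and $4q_\pm$ -- visibly carries a factor of $q$ or of $2$, so all are even and $\Delta_S(G)$ is Eulerian. For $\mathrm{PSL}(2,3^p)$ I would use that $q=3^p\equiv 3\pmod 4$ (as $p$ is odd): in Table \ref{tbl2,3p} the entries $q(q+1)$, $q(q-1)$ and $q+1$ are even since $q$ is odd, and $q(q+5)/2$ is even because $q+5\equiv 0\pmod 4$ forces $(q+5)/2$ to be even; hence $\Delta_S(G)$ is Eulerian. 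For $\mathrm{PSL}(3,3)$, Table \ref{tbl3,3} records $|\mathrm{Sol}_G(x)|=39$ for $|x|=13$, which is odd, so $\Delta_S(\mathrm{PSL}(3,3))$ is not Eulerian; this is case (2).

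The heart of the argument is $G=\mathrm{PSL}(2,p)$, which I would analyse by splitting on $p\bmod 4$. Scanning Tables \ref{tbl2,p,1}--\ref{tbl2,p,23}, the columns $|x|\divides p-1$ and $|x|\divides p+1$ always give $p(p-1)$ and $p+1$, both even, and every $|x|=2,3,4$ entry carries a factor $p-1$ or $p+1$ and is even. The sole entry whose parity varies is the $|x|=p$ column, $|\mathrm{Sol}_G(x)|=p(p-1)/2$, which is even exactly when $p\equiv 1\pmod 4$ and odd exactly when $p\equiv 3\pmod 4$. Hence $\Delta_S(G)$ is Eulerian when $p\equiv 1\pmod 4$, while for $p\equiv 3\pmod 4$ an element $x$ of order $p$ (which exists since $p\mid|G|$) has odd solubilizer and $\Delta_S(G)$ is not Eulerian. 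Finally I would translate ``$p\equiv 3\pmod 4$'' into the stated form: a minimal simple $\mathrm{PSL}(2,p)$ necessarily has $p\equiv 2$ or $3\pmod 5$, and combining this with $p\equiv 3\pmod 4$ via the Chinese Remainder Theorem gives precisely $p\equiv 3$ or $7\pmod{20}$. Collecting the cases yields exactly the claimed list.

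The main obstacle is bookkeeping rather than ideas: one must be sure that across all eight $\mathrm{PSL}(2,p)$ tables the $|x|=p$ column is the unique source of an odd solubilizer size and that no element order (in particular $x=1$) has been overlooked; the only slightly delicate parity facts are that $(q+5)/2$ is even for $q\equiv 3\pmod 4$ and that $p(p-1)/2$ flips parity with $p\bmod 4$.
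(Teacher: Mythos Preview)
Your proposal is correct and follows exactly the approach of the paper's proof, which simply states that the result follows immediately from Theorem \ref{eul} together with the tables in Section \ref{class}. Your write-up supplies the parity bookkeeping in full (including the CRT translation to residues modulo $20$), but the underlying method is identical.
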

\begin{proof}
    This immediately follows from the tables in Section \ref{class} classifying $|\SolG{x}|$ for minimal simple groups and Theorem \ref{eul}.
\end{proof}
\end{section}

\begin{section}{Computational Results}\label{comp}
All code, matrices and explicit results can be found in the following Github repository. 
\begin{center}
    https://github.com/jchuharski/GroupsOnGraphs
\end{center}
In the following lemma, we computationally improve the bounds on the chromatic number of $\Delta_S(G)$ for some small simple groups which is based on some computational results.
\begin{Lemma}
Given the following groups $G$, the bounds on the chromatic number of $\Delta_S(G)$ are as follows:
    \begin{enumerate}
        \item [{\rm (1)}] If $G=\rm{A}_5=\PSL(2,4),$ then $\chi(\Delta_S(G))=15,$
        \item [{\rm (2)}] If $G=\PSL(2,8),$ then $\chi(\Delta_S(G))=63,$
        \item [{\rm (3)}] If $G=\PSL(2,16),$ then $\chi(\Delta_S(G)) = 255,$
        \item [{\rm (4)}] If $G=\PSL(2,7),$ then $23 \le \chi(\Delta_S(G))\le 28,$
        \item [{\rm (5)}] If $G=\PSL(2,11),$ then $55\le \chi (\Delta_S(G))\le 56,$
        \item [{\rm (6)}] If $G=\PSL(2,13),$ then $91\le \chi (\Delta_S(G))\le 92,$
        \item [{\rm (7)}] If $G=\PSL(2,17),$ then $153\le \chi (\Delta_S(G))\le 155,$
        \item [{\rm (8)}] If $G=\PSL(2,19),$ then $171\le \chi (\Delta_S(G))\le 174,$
        \item [{\rm (9)}] If $G=\rm{A}_6,$ then $45 \le \chi(\Delta_S(G))\le 47,$
        \item [{\rm (10)}] If $G=\rm{A}_7,$ then $105 \le \chi(\Delta_S(G))\le 107,$
        \item [{\rm (11)}] If $G=\PSL(3,3),$ then $431 \le \chi (\Delta_S(G))\le 441.$
    \end{enumerate}
\end{Lemma}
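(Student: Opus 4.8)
The plan is to handle the lower bounds and the upper bounds by entirely different means. The lower bounds are established by exhibiting large cliques of $\Delta_S(G)$ by hand, using the classification of Section~\ref{class}; the upper bounds come from the explicit proper colourings produced by the graph-colouring algorithms of Section~\ref{comp}, whose output is recorded in the repository.

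For the lower bounds, recall from the proof of Corollary~\ref{bounds} that whenever $H\le G$ is soluble, the vertices $H\setminus R(G)$ span a complete subgraph of $\Delta_S(G)$, since any two of them generate a subgroup of $H$; hence $\chi(\Delta_S(G))\ge |H|-|R(G)|$. Every group in the statement is simple, so $R(G)=1$. Two families of cliques suffice. First, the set of all involutions of $G$ is a clique, because two involutions generate a dihedral (hence soluble) group. Second, any maximal soluble subgroup of $G$ is a clique: for $G=\rm{PSL}(2,7)$ the subgroup $\rm{S}_4$ yields $24-1=23$, and for $G=\rm{PSL}(3,3)$ the soluble maximal subgroup $(C_3^2\rtimes Q_8)\rtimes C_3$ of order $432$ yields $431$, as already recorded in Corollary~\ref{bounds}. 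In every other case on the list the number of involutions --- read off from the tables of Section~\ref{class} for $\rm{PSL}(2,4),\rm{PSL}(2,8),\rm{PSL}(2,16),\rm{PSL}(2,13),\rm{PSL}(2,17),\rm{PSL}(2,19)$ and computed directly for $\rm{A}_6,\rm{A}_7,\rm{PSL}(2,11)$ --- is the largest clique we produce and equals the stated lower bound; one checks from the orders of the maximal subgroups that no soluble subgroup is larger.

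For the upper bounds, one forms the adjacency matrix of $\Delta_S(G)$: adjacency of $x$ and $y$ is read off from the description of $\SolG{x}$ in Section~\ref{class} for the minimal simple groups, and computed directly in GAP for the small groups. Applying the colouring routine of Section~\ref{comp} yields a proper colouring with the stated number of colours, already far below the Brooks bound used in Corollary~\ref{bounds}. For $\rm{PSL}(2,4)=\rm{A}_5$, $\rm{PSL}(2,8)$ and $\rm{PSL}(2,16)$ this colouring uses exactly $15$, $63$ and $255$ colours, matching the respective involution-clique lower bounds, so $\chi(\Delta_S(G))$ equals that number; for the remaining groups the best colouring found exceeds the clique bound by at most three, which is the width of the interval.

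The main obstacle is certifying optimality of these colourings. In parts (1)--(3) one needs the computed colouring to meet the clique bound, and no transparent combinatorial $(q^2-1)$-colouring of $\Delta_S(\rm{PSL}(2,2^p))$ --- colouring the involutions with distinct colours and then recolouring the elements of the cyclic tori so as to reuse exactly those colours --- is apparent, so the argument is genuinely computational. In parts (4)--(11) the obstacle is to close the small gap: matching $\chi$ to the lower bound would require certifying the heuristic colouring as optimal (e.g. via an exact SAT or ILP colouring solver), and neither pushing down the upper bound nor enlarging the clique seems feasible by hand, which is why these are left as intervals.
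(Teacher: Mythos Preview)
Your proposal is correct and takes essentially the same approach as the paper: lower bounds come from the involution clique (or, for $\rm{PSL}(2,7)$ and $\rm{PSL}(3,3)$, from a larger soluble-subgroup clique), and upper bounds from the greedy colouring algorithm of Section~\ref{comp}, with equality in cases (1)--(3) because the greedy colouring meets the clique bound. One minor slip: $\rm{PSL}(2,16)$ is not among the minimal simple groups treated in Section~\ref{class} (since $16=2^4$ and $4$ is not prime), so its involution count must be computed directly rather than ``read off from the tables''; the paper simply uses GAP throughout for these counts.
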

\begin{proof}
    As in Corollary \ref{bounds}, we obtain the lower bounds from the number of involutions in these groups, which we can compute directly using GAP. For the upper bounds, we employ a greedy coloring algorithm which runs through the vertices in the graph and assigns to each the lowest number that it is not adjacent to. While this does not guarantee optimal coloring, it necessarily produces a proper coloring, hence resulting in an upper bound on the chromatic number. In the first three cases, we obtain a proper coloring with the lower bound and thus must have equality. Similar to how we tightened the lower bound for $\PSL(3,3)$ in Corollary \ref{bounds}, for $\PSL(2,7),$ we use the subgroup $\rm{S}_4$ to get a lower bound of $23$ instead of $21$ which is the number of involutions.
\end{proof}
\begin{Lemma}
    We computationally obtain Hamiltonian cycles $\Delta_S(G)$ for the following groups: $\rm{A}_5, \rm{A}_6, \PSL(2,7), $ $\PSL(2,8), \PSL(2,11), \PSL(2,13), \PSL(2,16), \PSL(2,17), \PSL(3,3).$ As proved previously, for $\PSL(2,19)$ this is impossible.
\end{Lemma}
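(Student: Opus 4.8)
The plan for the nine affirmative cases is purely computational, and the impossibility for $\PSL(2,19)$ is inherited from the non-existence theorem established earlier (the one showing $\Delta_S(\PSL(2,p))$ has no Hamiltonian cycle when $p\equiv 19\pmod{24}$). First, for each group $G$ in the list I would build $\dgraph$ explicitly. Each such $G$ is simple, so $R(G)=1$ and the vertex set of $\dgraph$ is $G\setminus\{1\}$; working in GAP with a faithful permutation representation of $G$, I would run over the non-identity elements and, for every unordered pair $\{x,y\}$, record an edge precisely when $\gen{x,y}$ is soluble (tested with GAP's \texttt{IsSolvable}), obtaining the adjacency matrix of $\dgraph$. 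The vertex counts are moderate --- $59$ for $\mathrm{A}_5$, rising to $2447$ for $\PSL(2,17)$, $4079$ for $\PSL(2,16)$ and $5615$ for $\PSL(3,3)$ --- so each adjacency matrix is easy to store, and the degree sequence $d_\Delta(x)=|\SolG{x}|-1$, which for the minimal simple groups can be read off the classification tables of Section~\ref{class} and otherwise recomputed in GAP, gives a convenient sanity check that the graph has been assembled correctly.

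Next, on each adjacency matrix I would run a Hamiltonian-cycle search: backtracking on a growing path with the standard pruning devices --- always extending to an unvisited neighbour of smallest residual degree, propagating forced moves whenever an interior vertex of the current path has a unique unvisited neighbour, and discarding a branch as soon as the unvisited set becomes disconnected or acquires a vertex of unvisited-degree below two --- optionally warm-started by a rotation-extension (P\'osa-type) heuristic producing a long initial path. A successful run outputs an explicit cyclic ordering of $G\setminus\{1\}$; these orderings are stored in the repository referenced at the start of this section. I would then certify each one \emph{independently of the search code} by checking that the listed sequence is a permutation of the vertex set and that every consecutive pair of entries, including the wrap-around pair, generates a soluble subgroup of $G$. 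For $\PSL(2,19)$ no search is required: the argument already given for $\Delta_S(\PSL(2,p))$ with $p\equiv 19\pmod{24}$ goes through verbatim, the obstruction being that the number of involutions, $\tfrac{p(p-1)}{2}$, coincides exactly with the number of subgroups isomorphic to $D_{p+1}$ --- each of which must be traversed and can only be entered and left through an involution --- leaving no involutions to spare for reaching the remaining vertices.

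The main obstacle is the search itself on the two largest instances, $G=\PSL(2,16)$ and $G=\PSL(3,3)$. Here $\dgraph$ satisfies no Dirac- or Ore-type sufficient condition for Hamiltonicity: by the classifications it contains many genuine bottleneck vertices --- an element whose solubilizer equals its normalizer, a Frobenius group of order on the order of $|G|^{1/3}$, has degree far below $|V(\dgraph)|/2$ --- so there is no shortcut, and the backtracking must simultaneously thread one cycle through all of these low-degree clusters. In practice this remains tractable because those vertices are highly structured: each bottleneck cluster is a clique attached to the large clique of involutions, which makes the forced-move propagation very effective. Should naive backtracking nevertheless stall, the same structure makes it routine either to assemble a cycle by hand along the lines of the explicit construction given above for the $\PSL(2,2^p)$ family, or to hand the instance to a SAT solver. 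Thus what remains is computational effort and bookkeeping rather than any mathematical difficulty.
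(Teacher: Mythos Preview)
Your proposal is correct and follows essentially the same computational approach as the paper: build the adjacency matrix of $\dgraph$ in GAP by testing solubility of $\gen{x,y}$, then run a heuristic search for a Hamiltonian cycle and store the explicit ordering, with the $\PSL(2,19)$ case handled by the earlier non-existence theorem. The only differences are implementation details --- the paper uses a greedy random algorithm in Julia (at each step moving to the unvisited neighbour with fewest remaining unvisited neighbours) rather than your backtracking-with-pruning scheme, and does not separately describe a certification pass --- and one small slip: for these simple groups $|R(G)|=1$, so $d_\Delta(x)=|\SolG{x}|-2$, not $|\SolG{x}|-1$.
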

\begin{proof}
    First, we use a GAP program to produce an adjacency matrix for $\Delta_S(G)$ by checking which pairs of elements generate a soluble subgroup of $G$. Then, we use Julia \cite{Julia} to implement a greedy random algorithm to find a Hamiltonian cycle in this graph. At each step, we randomly move to the neighbor with the least number of neighbors that we still need to get to. The output is in the Github linked above and takes the form of a list of indices, corresponding to the non-identity elements of $G$, listed in the order given by the GAP command List(G).
\end{proof}
\end{section}

\begin{center}
 {\sc Acknowledgments}
\end{center}
This research was initiated in a SPUR program at Cornell University. Jake Chuharski, Vismay Sharan and Zachary Slonim would like to thank Olu Olorode for mentoring them and giving valuable insight  through this program.

\begin{center}
 {\sc Appendix}\label{Appendix}
\end{center}
    The intersections between maximal subgroups containing $x$ for $G=\PSL(3,3).$
    
    \noindent\textbf{Case 1:} $|x|=2.$
    \begin{itemize}
        \item [{\rm -}] $(C_3^2\rtimes Q_8)\rtimes C_3\cap (C_3^2\rtimes Q_8)\rtimes C_3\cong (C_3^2\rtimes C_3)\rtimes (C_2^2).$ Occurs 12 times.
\item [{\rm -}] $(C_3^2\rtimes Q_8)\rtimes C_3\cap (C_3^2\rtimes Q_8)\rtimes C_3\cong \rm{S}_3 \times \rm{S}_3.$ Occurs 20 times.
\item [{\rm -}] $(C_3^2\rtimes Q_8)\rtimes C_3\cap (C_3^2\rtimes Q_8)\rtimes C_3\cong GL(2,3).$ Occurs 13 times.
\item [{\rm -}] $(C_3^2\rtimes Q_8)\rtimes C_3\cap \rm{S}_4\cong D_8.$ Occurs 60 times.
\item [{\rm -}] $(C_3^2\rtimes Q_8)\rtimes C_3\cap \rm{S}_4\cong C_2 \times C_2.$ Occurs 72 times.
\item [{\rm -}] $(C_3^2\rtimes Q_8)\rtimes C_3\cap \rm{S}_4\cong \rm{S}_3.$ Occurs 48 times.
\item [{\rm -}] $\rm{S}_4\cap \rm{S}_4\cong D_8.$ Occurs 15 times.
\item [{\rm -}] $\rm{S}_4\cap \rm{S}_4\cong C_2.$ Occurs 96 times.
\item [{\rm -}] $\rm{S}_4\cap \rm{S}_4\cong C_2 \times C_2.$ Occurs 18 times.
\item [{\rm -}] $\rm{S}_4\cap \rm{S}_4\cong \rm{S}_3.$ Occurs 24 times.
    \end{itemize}

    \noindent\textbf{Case 2:} $|x|=3, |N_G(\gen{x})|=18.$
    \begin{itemize}
        \item [{\rm -}] $\rm{S}_4\cap (C_3^2\rtimes Q_8)\rtimes C_3\cong \rm{S}_3.$ Occurs 6 times.
\item [{\rm -}] $\rm{S}_4\cap \rm{S}_4\cong \rm{S}_3.$ Occurs 3 times.
\item [{\rm -}] $\rm{S}_4\cap C_{13} \rtimes C_3\cong C_3.$ Occurs 18 times.
\item [{\rm -}] $(C_3^2\rtimes Q_8)\rtimes C_3\cap (C_3^2\rtimes Q_8)\rtimes C_3\cong (C_3^2\rtimes C_3)\rtimes (C_2^2).$ Occurs 1 times.
\item [{\rm -}] $(C_3^2\rtimes Q_8)\rtimes C_3\cap C_{13} \rtimes C_3\cong C_3.$ Occurs 12 times.
\item [{\rm -}] $C_{13} \rtimes C_3\cap C_{13} \rtimes C_3\cong C_3.$ Occurs 15 times.

    \end{itemize}

    \noindent \textbf{Case 3:} $|x|=3, |N_G(\gen{x})|=108.$
    \begin{itemize}
        \item [{\rm -}] $(C_3^2\rtimes Q_8)\rtimes C_3\cap (C_3^2\rtimes Q_8)\rtimes C_3\cong GL(2,3).$ Occurs 9 times.
\item [{\rm -}] $(C_3^2\rtimes Q_8)\rtimes C_3\cap (C_3^2\rtimes Q_8)\rtimes C_3\cong (C_3^2\rtimes C_3)\rtimes (C_2^2).$ Occurs 7 times.
\item [{\rm -}] $(C_3^2\rtimes Q_8)\rtimes C_3\cap (C_3^2\rtimes Q_8)\rtimes C_3\cong \rm{S}_3 \times \rm{S}_3.$ Occurs 12 times.
    \end{itemize}
    
    \noindent\textbf{Case 4:} $|x|=4$
\begin{itemize}
        \item [{\rm -}] $(C_3^2\rtimes Q_8)\rtimes C_3\cap (C_3^2\rtimes Q_8)\rtimes C_3\cong GL(2,3).$ Occurs 1 time.
\item [{\rm -}] $(C_3^2\rtimes Q_8)\rtimes C_3\cap \rm{S}_4\cong D_8.$ Occurs 4 times.
\item [{\rm -}] $\rm{S}_4\cap \rm{S}_4\cong D_8.$ Occurs 1 time.
    \end{itemize}

    \noindent\textbf{Case 5:} $|x|=6.$
    \begin{itemize}
        \item [{\rm -}] $(C_3^2\rtimes Q_8)\rtimes C_3\cap (C_3^2\rtimes Q_8)\rtimes C_3\cong (C_3^2\rtimes C_3)\rtimes (C_2^2).$ Occurs 3 times.
\item [{\rm -}] $(C_3^2\rtimes Q_8)\rtimes C_3\cap (C_3^2\rtimes Q_8)\rtimes C_3\cong \rm{S}_3 \times \rm{S}_3.$ Occurs 2 times.
\item [{\rm -}] $(C_3^2\rtimes Q_8)\rtimes C_3\cap (C_3^2\rtimes Q_8)\rtimes C_3\cong GL(2,3).$ Occurs 1 time.
    \end{itemize}

    \noindent\textbf{Case 6:} $|x|=8.$
    \begin{itemize}
        \item [{\rm -}] $(C_3^2\rtimes Q_8)\rtimes C_3\cap (C_3^2\rtimes Q_8)\rtimes C_3\cong GL(2,3).$ Occurs 1 time.
    \end{itemize}

    \noindent\textbf{Case 7:} $|x|=13.$
    \begin{itemize}
        \item [{\rm -}] No intersections.
    \end{itemize}    

\end{document}